\newcommand{\define}{\textbf}
\newcommand{\excise}[1]{}
\newcommand{\isom}{\cong}
\renewcommand{\setminus}{\smallsetminus}
\renewcommand{\phi}{\varphi}
\renewcommand{\tilde}{\widetilde}
\renewcommand{\bar}{\overline}
\renewcommand{\AA}{\mathbb{A}}
\newcommand{\BB}{\mathbb{B}}
\newcommand{\CC}{\mathbb{C}}
\newcommand{\EE}{\mathbb{E}}
\newcommand{\PP}{\mathbb{P}}
\newcommand{\ZZ}{\mathbb{Z}}
\newcommand{\qq}{\mathbf{q}}   
\newcommand{\dd}{\mathbf{d}}   
\newcommand{\nn}{\mathbf{n}}   
\newcommand{\ee}{\mathbf{e}}
\newcommand{\ul}{\mathbf}
\newcommand{\pt}{\mathrm{pt}}  
\newcommand{\OO}{\mathcal{O}}  
\newcommand{\Aa}{\mathscr{A}}
\newcommand{\Bb}{\mathscr{B}}
\newcommand{\Fl}{{Fl}}
\newcommand{\FFl}{\mathbf{Fl}}  
\newcommand{\OOmega}{\mathbf{\Omega}} 
\newcommand{\D}{D}              
\newcommand{\DD}{\mathbf{D}}    
\newcommand{\mor}{M}            
\newcommand{\quot}{\mathscr{Q}} 
\newcommand{\hq}{\mathscr{Q}}   
\newcommand{\uu}{\mathscr{U}}
\newcommand{\Mbar}{\bar{M}}     
\newcommand{\Sch}{\mathfrak{S}} 
\newcommand{\scl}{\sigma}       
\newcommand{\qp}{\star}         
\newcommand{\qtp}{\circ}        
\newcommand{\ev}{\mathrm{ev}}   
\newcommand{\flag}{\Fl(\nn)}    
\newcommand{\Sn}{S^{\nn}}        
\newcommand{\qpi}{\tilde\pi}    
\newcommand{\Mm}{\mathbf{M}}			
\newcommand{\Mmbar}{\mathbf{\Mbar}}	
\newcommand{\Qq}{\mathcal{Q}}		
\newcommand{\Uu}{\mathcal{U}}
\newcommand{\Xx}{\mathbf{X}}
\DeclareMathOperator{\codim}{codim}
\DeclareMathOperator{\rk}{rk}
\DeclareMathOperator{\rank}{rank}
\DeclareMathOperator{\Span}{span}
\DeclareMathOperator{\Hom}{Hom}
\newtheorem{theorem}{Theorem}[section]
\newtheorem{lemma}[theorem]{Lemma}
\newtheorem{proposition}[theorem]{Proposition}
\newtheorem{corollary}[theorem]{Corollary}
\theoremstyle{definition}
\newtheorem{remark}[theorem]{Remark}
\newtheorem{example}[theorem]{Example}
\begin{document}

\title{Equivariant quantum Schubert polynomials}
\author{Dave Anderson}
\address{Department of Mathematics\\University of Washington\\Seattle, WA 98195}
\email{dandersn@math.washington.edu}
\author{Linda Chen}
\address{Department of Mathematics and Statistics\\Swarthmore College\\Swarthmore, PA 19081}
\email{lchen@swarthmore.edu}
\keywords{quantum cohomology; equivariant cohomology; Schubert polynomials; flag variety}
\date{February 14, 2012}
\thanks{DA was partially supported by NSF Grant DMS-0902967. LC was partially supported by NSF Grant DMS-0908091 and NSF Grant DMS-1101625.}

\begin{abstract}
We establish an equivariant quantum Giambelli formula for partial flag varieties.  The answer is given in terms of a specialization of universal double Schubert polynomials.  Along the way, we give new proofs of the presentation of the equivariant quantum cohomology ring, as well as Graham-positivity of the structure constants in equivariant quantum Schubert calculus.
\end{abstract}

\maketitle

\section{Introduction}\label{s:intro}

Classical Schubert calculus is concerned with the cohomology rings of Grassmannians and (partial) flag varieties.  In recent years, equivariant and quantum versions of Schubert calculus have been developed.  Key ingredients in each of these theories are a presentations of the ring and ``Giambelli formulas'' expressing the additive basis of Schubert classes in terms of the presentation.  For example, the ordinary cohomology of the Grassmannian is generated by Chern classes, and the classical Giambelli formula in this context states that a Schubert class is represented by a Schur polynomial, which has a determinantal expression in terms of Chern classes.

The past fifteen years have seen great progress in modern Schubert calculus.  Many authors---including Bertram, Buch, Ciocan-Fontanine, Coskun, Kresch, Knutson, Mihalcea, Tamvakis, and Vakil---have proven results on quantum cohomology, equivariant cohomology, $K$-theory, and, more recently, equivariant $K$-theory and equivariant quantum cohomology.  The latter theory has connections with affine Schubert calculus: following ideas of Peterson, the relationship between (equivariant) homology of affine Grassmannians and (equivariant) quantum cohomology of partial flag varieties has been developed by Lapointe and Morse \cite{lm} and Lam and Shimozono \cite{ls-acta,ls}.  The associated combinatorics of $k$-Schur functions imparts further interest to the study of equivariant quantum Schubert calculus.

In this article, we study the \emph{equivariant quantum cohomology ring} of a partial flag variety; the main results give Giambelli formulas for Schubert classes.  Specifically, we define \emph{equivariant quantum Schubert polynomials} $\Sch^q_w(\sigma,t)$ (or more simply, $\Sch^q_w(x,t)$ in the case of  complete flag varieties) as specializations of Fulton's universal double Schubert polynomials.  This specialization is analogous to the specialization of universal (single) Schubert polynomials to Fomin-Gelfand-Postnikov's quantum Schubert polynomials for the quantum cohomology of complete flag varieties \cite{fgp}.  In the case of complete flags, our equivariant polynomial $\Sch^q_w(x,t)$ is equal to the specialization studied by Kirillov-Maeno under the name \emph{quantum double Schubert polynomial} \cite{kima}.  As a key feature of our point of view, we obtain a direct relationship between universal double Schubert polynomials, which solve a degeneracy locus problem, and their specialization to (equivariant) quantum Schubert polynomials (cf.~\cite{chen}).

Fix $\nn = (0<n_1<\cdots<n_m<n)$ and let $\Fl(\nn)$ denote the partial flag variety parametrizing flags $V_1 \subset \cdots \subset V_m \subset \CC^n$, with $\dim V_i = n-n_i$.  Recall that the Schubert classes $\scl_w$ are parametrized by certain permutations $\Sn \subseteq S_n$ (see \S\ref{ss:flags}).  We write $\Fl(n)$ for the complete flag variety, corresponding to the case $\nn = \{1,\ldots,n-1\}$.

To state the main theorem, we describe a specialization of the \emph{universal double Schubert polynomial} $\Sch_w(g,h)$, which is a polynomial in variables $g_i[j]$ and $h_i[j]$ defined by Fulton \cite{fulton}.  Specialize these variables as follows:
\begin{equation}\label{e:spec1}
\begin{aligned}
  g_i[0] & \mapsto x_i  & &\text{for }1\leq i\leq n, \\
  g_{n_{i-1}+1}[n_{i+1}-n_{i-1}-1] & \mapsto (-1)^{n_i-n_{i-1}+1} q_i & & \text{for }1\leq i\leq m, \\
  h_i[0] & \mapsto t_i  & &\text{for }1\leq i\leq n,  \text{ and} \\
  g_i[j],\; h_i[j] & \mapsto 0 & & \text{for all other } i,j.
\end{aligned}
\end{equation}
Writing $\sigma_i^j$ for the $i$th elementary symmetric polynomial in the variables $x_{n_{j-1}+1},\ldots,x_{n_j}$, the equivariant quantum Schubert polynomial $\Sch_w^q(\sigma,t)$ is the result of these specializations applied to $\Sch_w(g,h)$.  (See \S\ref{s:univ-sch} for more background on universal Schubert polynomials and these specializations.)

\begin{theorem}
\label{t:main}
For any permutation $w\in \Sn$, we have
\[
  \scl_w = \Sch^q_w(\sigma,t)
\]
as classes in $QH_T^*(Fl(\nn))$.
\end{theorem}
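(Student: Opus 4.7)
The approach is geometric: I would realize $\mathfrak{S}_w^q(\sigma, t)$ as the class of a degeneracy locus on a compactification of the space of maps $\mathbb{P}^1 \to \Fl(\nn)$, then match it with $\sigma_w$ using the universal property of equivariant quantum cohomology. Let $\mathcal{Q}_d$ denote the hyperquot scheme parametrizing quasi-maps of multi-degree $d = (d_1,\ldots,d_m)$ from $\mathbb{P}^1$ to $\Fl(\nn)$. It carries a canonical flag $\mathcal{V}_1 \subset \cdots \subset \mathcal{V}_m \subset \CC^n \otimes \mathcal{O}_{\mathcal{Q}_d}$ of universal subsheaves, and by work of Givental--Kim and Bertram--Ciocan-Fontanine--Fulton, the equivariant Gromov--Witten invariants of $\Fl(\nn)$ can be computed as equivariant pushforwards of intersection products on $\mathcal{Q}_d$. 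Consequently, to prove $\sigma_w = \mathfrak{S}_w^q(\sigma,t)$ it suffices to match all equivariant Gromov--Witten pairings of the two sides against Schubert classes.

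The proof would proceed in three steps. First, I would apply Fulton's theorem \cite{fulton}: on $\mathcal{Q}_d$ one can exhibit a natural flagged morphism $E_\bullet \to F_\bullet$ whose $w$-th degeneracy locus class is $\mathfrak{S}_w(g,h)$, where the $g_i[j]$ and $h_i[j]$ record Chern classes of $E_\bullet$, $F_\bullet$ and their successive quotients. Second, I would match these Chern classes with the specializations~\eqref{e:spec1}: the assignment $g_i[0]\mapsto x_i$ captures Chern roots of the universal subbundle pulled back to a fixed point of $\mathbb{P}^1$, the assignment $h_i[0]\mapsto t_i$ records the equivariant weights on $\CC^n$, and the key substitution $g_{n_{i-1}+1}[n_{i+1}-n_{i-1}-1]\mapsto (-1)^{n_i-n_{i-1}+1}q_i$ arises from the first Chern class of the determinant line bundle $\det\mathcal{V}_i$ restricted to the degree-$d$ component, with the sign dictated by the rank of the successive quotient (compare~\cite{chen}). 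Third, I would use the projection formula to push forward to $H_T^*(\Fl(\nn))$, producing equivariant Gromov--Witten invariants that agree with those of $\sigma_w$ on every Schubert class, thereby forcing the equality in $QH_T^*(\Fl(\nn))$.

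The main obstacle I anticipate is the second step: making the geometric construction on $\mathcal{Q}_d$ precise enough that Fulton's theorem applies with the exact specialization~\eqref{e:spec1}, including the signs. Boundary contributions, where the universal subsheaves fail to be locally free, require care so that the degeneracy locus class really represents the correct pullback of $\sigma_w$ across the whole hyperquot scheme and not merely on the open locus of honest maps. A useful sanity check is the limit $q\to 0$: the argument should then specialize to Fulton's classical formula expressing equivariant Schubert classes of $\Fl(\nn)$ via universal double Schubert polynomials, giving a known base case from which the quantum deformation can be tracked.
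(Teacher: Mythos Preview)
Your outline has the right geometric setting---quot schemes, Fulton's degeneracy locus formula, and pushforwards computing Gromov--Witten numbers---but two genuine gaps separate it from a proof.

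First, you invoke as known that equivariant Gromov--Witten invariants can be computed as equivariant pushforwards on $\mathcal{Q}_d$. The non-equivariant statement is indeed in the literature (Bertram, Ciocan-Fontanine), but its proof uses general-position arguments for a transitive group action on $\Fl(\nn)$, and such arguments are not available equivariantly: the torus does not move Schubert varieties. Establishing this fact in the equivariant setting is exactly Proposition~\ref{p:product-quot} of the paper, and it requires the equivariant moving lemmas of \S\ref{s:moving}, built on the mixing group of \S\ref{s:mixing}. Without this, Step~3 has no foundation.

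Second, your Step~2 misidentifies how the $q$-variables arise. On $\mathcal{Q}_d$, Fulton's formula gives $\mu_w = \mathfrak{S}_w^{\nn}(Q,t)$ where the $Q_i[j]$ are honest equivariant Chern classes of the universal sheaves (Proposition~\ref{p:deg-mu}); there is no $q$ in sight on a fixed $\mathcal{Q}_d$. The substitution $g_{n_{i-1}+1}[n_{i+1}-n_{i-1}-1]\mapsto (-1)^{n_i-n_{i-1}+1}q_i$ is \emph{not} a Chern-class identity on the quot scheme---it emerges only after summing over all $\dd$ and computing the EQLR coefficients. The paper handles this by a simultaneous induction (Theorem~\ref{t:sigma-usp} and Proposition~\ref{p:poly-in-Q}) on $\ell(w)$ and on the $Q$-degree of polynomials: one rewrites the extra $Q_i[j]$ (for $j>0$) in terms of lower Schubert polynomials (Lemma~\ref{l:rewrite-g}), and then Lemmas~\ref{l:single-Q} and~\ref{l:single-computation-Q} pin down the single surviving term as $(-1)^{n_{l+1}-n_l+1}q_l$. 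Your proposed source for the sign (a determinant line bundle) does not do this work; boundary contributions are controlled not by a direct Chern-class computation but by the vanishing arguments in those lemmas.
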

\noindent

\noindent
As an example, the equivariant quantum Schubert polynomials for $\Fl(3)$ are listed in Table~\ref{table3}.

\begin{table}[h]
\[
\begin{array}{|l|l|} \hline
 w    &  \Sch^q_w(x,t)    \\ \hline\hline
123   &  1  \\ \hline
213   & x_1 - t_1 \\ \hline
132   & x_1 + x_2 - t_1 - t_2   \\ \hline
231   & x_1\,x_2 + q_1 - \left( x_1 + x_2 \right) t_1 + t_1^{2} \\ \hline
312   & x_1^{2} - q_1 - x_1\,(t_1 + t_2) + t_1\,t_2 \\ \hline
321   & \left( x_1 - t_2 \right)  \left( x_1\,x_2 + q_1 - \left( x_1 + x_2 \right) t_1 + t_1^{2} \right)  \\ \hline
\end{array}
\]
\caption{Equivariant quantum Schubert polynomials for $Fl(3)$. \label{table3}}
\end{table}

Our approach is to adapt the methods in \cite{chen} to the equivariant situation.  One of the main geometric tools used in previous approaches to quantum Giambelli formulas (cf.~\cite{bertram,cf-partial,chen} is a moving lemma for quot schemes which relies on general position arguments not immediately available in equivariant cohomology.  To surmount this technical difficulty, we use the ``mixing group'' action introduced in \cite{anderson} to prove an equivariant moving lemma.  
The new equivariant moving lemma proved in \S\ref{s:moving} should be of independent interest; we use a version to prove positivity of equivariant Gromov-Witten invariants in \cite{ac}.

For the present paper, the main consequence of the moving lemmas is that the equivariant quantum structure constants can be computed on quot schemes (Proposition~\ref{p:product-quot}).  This allows us to use the inductive method of \cite{chen} to prove the equivariant quantum Giambelli formula (\S\ref{s:proofs}).  We expect it to have further applications, since it also allows one to use the apparatus of equivariant localization on quot schemes, which is well understood, thanks to \cite{bcs}.

Our results immediately recover the presentation of the equivariant quantum ring presentation computed in \cite{kim-eq} (see Corollary~\ref{c:presentation}), the equivariant quantum Giambelli formula for Grassmannians in terms of factorial Schur polynomials in \cite{mihalcea-giambelli}, and the equivariant Giambelli formula for flag varieties in terms of double Schubert polynomials in \cite{knmi}.  
In addition, our approach recovers the Graham-positivity result in \cite{mihalcea-positivity}: as polynomials in a natural choice of variables, the structure constants for equivariant quantum multiplication have nonnegative coefficients (Corollary~\ref{c:eqlr-enum}).

Like their non-equivariant counterparts, the equivariant quantum Schubert polynomials possess a stability property: the same polynomial represents a Schubert class $\sigma_w$ of codimension $\ell(w)$, independently of which flag variety $\Fl(n)$ it is in, for sufficiently large $n$.  In \S\ref{s:stability}, we give a precise statement to this effect.  A useful computational consequence is that equivariant quantum products are computed (on the nose, not up to an ideal) by products of Schubert polynomials, at least for sufficiently large $n$.  We include some examples, using the equivariant quantum Schubert polynomials to produce multiplication tables for $QH_T^*\Fl(n)$ (for small $n$).

After the results presented here were announced, the equivariant quantum Giambelli formula was proved by Lam and Shimozono, using different methods \cite{ls2}.

\smallskip
\noindent
{\it Acknowledgements.}  This project began in March 2010 at the AIM workshop on Localization Techniques in Equivariant Cohomology, and we thank William Fulton, Rebecca Goldin, and Julianna Tymoczko for organizing that meeting.  We also thank Sara Billey for helpful comments, and Anders Buch for sharing Maple code which we modified to compute equivariant quantum products.  DA is grateful for the hospitality of the Mathematics Department at the University of British Columbia, where much of this work took place.

\section{Background and notation}\label{s:back}

%
\subsection{Flag varieties}\label{ss:flags}

We recall some basic facts about partial flag varieties.  Let
\[
  \nn = \{0=n_0<n_1<\cdots<n_m<n_{m+1}=n\}
\]
be a strictly increasing sequence of integers, and let $V=\CC^n$.  The \emph{$m$-step partial flag variety} $\Fl(\nn)=\Fl(n_1,\ldots,n_m;V)$ parametrizes flags $V_\bullet = (V_m \subset \cdots \subset V_1 \subset V)$, with $\dim V_i = n-n_i$; equivalently, $\Fl(\nn)$ parametrizes successive quotients $V/V_i$ of dimension $n_i$.  This is a smooth projective variety of dimension $\dim\Fl(\nn)=\sum_{i=1}^m n_i(n_{i+1}-n_i)$, and it comes equipped with a universal sequence of quotient bundles:
%
%
\[
 V_{\Fl(\nn)}=Q_{m+1} \twoheadrightarrow \cdots \twoheadrightarrow Q_2 \twoheadrightarrow Q_1,
\]
where $Q_i$ is the vector bundle of rank $n_i$ whose fiber  over $V_\bullet$ is $V/V_{i}$. When $\nn=\{1,2,\ldots,n-1\}$, we obtain the complete flag variety, which we write as $Fl(\CC^n)$.

These bundles give generators for cohomology ring of $\Fl(\nn)$, as follows.  For $1\leq j\leq m+1$, let $x_{n_{j-1}+1},\ldots,x_{n_j}$ be the Chern roots of the bundle $\ker(Q_j\rightarrow Q_{j-1})$.  For $1\leq i \leq n_j-n_{j-1}$, set
\begin{equation}\label{e:sigma-def1}
  \sigma_i^j=c_i(\ker(Q_j\rightarrow Q_{j-1}));
\end{equation}
this is the $i$th elementary symmetric polynomial in $x_{n_{j-1}+1},\ldots,x_{n_j}$.  
Since the Chern class $c_k(Q_j)$ is symmetric in $x_{n_{j-1}+1},\ldots,x_{n_j}$ for every $1\leq j\leq l$, it can be written as a polynomial in $\sigma_i^j$, which we denote by $\tilde{e}_k(l)(\sigma)$ or $\tilde{e}_k(l)$.  
Note that when $\Fl(\nn)$ is the complete flag variety $\Fl(\CC^n)$, $\sigma_i^j$ is defined for $i=1$, and in that case, $\sigma_1^j=x_j$.

\begin{theorem}\label{t:pres}
The cohomology ring of $\Fl(\nn)$ is presented as 
\[
 H^*(\Fl(\nn)) \isom \ZZ[\sigma_1^1,\ldots,\sigma_{n_1}^1,\ldots,\sigma_{1}^{{m+1}},\ldots,\sigma_{n-n_m}^{{m+1}}]/I,
\]
where $I$ is the ideal $(\tilde{e}_1(m+1),\ldots,\tilde{e}_n(m+1))$.
\end{theorem}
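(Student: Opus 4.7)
The plan is to realize $\Fl(\nn)$ as an iterated Grassmannian bundle and apply the Grassmannian bundle formula (Leray--Hirsch) at each stage. Set $Y_j = \Fl(n_1,\ldots,n_j;V)$, so that $Y_0 = \pt$ and $Y_m = \Fl(\nn)$. The forgetful map $p_j\colon Y_j \to Y_{j-1}$ realizes $Y_j$ as the Grassmannian bundle of rank-$(n_j-n_{j-1})$ quotients of the tautological rank-$(n-n_{j-1})$ subbundle $V_{j-1}$ on $Y_{j-1}$. The Grassmannian bundle formula then describes $H^*(Y_j)$ as $H^*(Y_{j-1})$ adjoining the Chern classes of the new quotient $V_{j-1}/V_j = \ker(Q_j\to Q_{j-1})$ and of the new subbundle $V_j$, modulo the short exact sequence relation $c(V_j)\,c(V_{j-1}/V_j) = p_j^* c(V_{j-1})$.

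Iterating $j=1,\ldots,m$ and collapsing the intermediate relations (by repeatedly substituting $c(V_j) = c(V_{j-1})/c(V_{j-1}/V_j)$), one arrives at the single identity
\[
c(V_m) \cdot \prod_{j=1}^{m} c(V_{j-1}/V_j) \;=\; c(V_0) \;=\; c(V) \;=\; 1,
\]
using that $V = V_0$ is the trivial rank-$n$ bundle. Under the identifications $\sigma_i^j = c_i(V_{j-1}/V_j)$ (for $j \le m$) and $\sigma_i^{m+1} = c_i(V_m)$, equating the degree-$k$ parts for $k=1,\ldots,n$ gives exactly the relations $\tilde{e}_k(m+1) = 0$. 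Hence $I$ lies in the kernel of the evident ring map $\ZZ[\sigma_i^j] \twoheadrightarrow H^*(\Fl(\nn))$, and the $\sigma_i^j$ generate.

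The heavy lifting is done by the Grassmannian bundle formula itself, which guarantees that no further relations appear at each stage (it is proved by exhibiting Schubert classes on the fibre as a free basis, via Leray--Hirsch). The main obstacle, then, is not a deep one, but rather careful bookkeeping: matching the $n$ relations produced by the iterated short exact sequences with the single family $\tilde{e}_k(m+1) = 0$ for $k=1,\ldots,n$, after elimination of the $c(V_j)$ for $j<m$. A rank count giving $n!/\prod_j(n_j-n_{j-1})!$ on both sides provides a useful sanity check that nothing has been lost in the elimination.
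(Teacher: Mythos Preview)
The paper states this theorem as classical background and offers no proof of its own, so there is nothing to compare against directly. Your approach---iterating the Grassmannian bundle formula along the tower $Y_0=\pt,\,Y_1,\,\ldots,\,Y_m=\Fl(\nn)$---is the standard one and is essentially correct.

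One step deserves more care than you give it. After iterating, the presentation has generators $c_i(V_{j-1}/V_j)$ and $c_i(V_j)$ for $1\le j\le m$, with relations $c(V_j)\,c(V_{j-1}/V_j)=c(V_{j-1})$ (and $c(V_0)=1$). You then eliminate the $c_i(V_j)$ for $j<m$ and claim the result is $\ZZ[\sigma]/I$. This is true, but it is an honest ring-isomorphism argument, not merely a substitution: one defines inverse maps between $\ZZ[\sigma]/I$ and the iterated presentation by sending $c_i(V_j)$ to the degree-$i$ part of $\prod_{k>j}(1+\sigma_1^k+\sigma_2^k+\cdots)$, and checks well-definedness using the telescoped identity $c(V_j)=\prod_{k>j}(1+\sigma^k)$ in one direction and the single relation $\prod_k(1+\sigma^k)=1$ in the other. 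Calling this ``careful bookkeeping'' is fair, but the verification should actually be written out.

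The rank count is less of an independent check than you suggest: computing the $\ZZ$-rank of $\ZZ[\sigma]/I$ directly requires knowing that $\tilde e_1(m+1),\ldots,\tilde e_n(m+1)$ form a regular sequence in $\ZZ[\sigma]$, which is true but not immediate (one typically deduces it from the fact that $e_1,\ldots,e_n$ are a regular sequence in $\ZZ[x_1,\ldots,x_n]$ together with freeness of $\ZZ[x]$ over $\ZZ[\sigma]$). Once the elimination is carried out properly you do not need this.
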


In the special case of the complete flag variety $\Fl(\CC^n)$, this gives
\[
 H^*(\Fl(\CC^n)) \isom \ZZ[x_1,\ldots,x_n]/(e_1(x),\ldots,e_n(x))
\]
where $x_i=c_1(\ker(Q_j\rightarrow Q_{j-1}))$ and $e_i(x)$ is the $i$th elementary symmetric polynomial in $x_1,\ldots,x_n$ for  $1\leq i\leq n$ .

\subsection{Permutations and Bruhat order}\label{ss:perms}

The cohomology ring $H^*(\Fl(\nn))$ has a $\ZZ$-basis of Schubert classes, indexed by permutations in the set
\[
  \Sn:=\{ w\in S_n: w(i)<w(i+1) \text{ if } n-i\not\in\nn\}.
\]
If $S_\nn$ denotes the subgroup of $S_n$ generated by adjacent transpositions $(i,i+1)$ for $n-i\not\in\nn$, then $\Sn$ is a set of coset representatives for $S_n/S_\nn$.  (For the complete flag variety $\Fl(\CC^n)$, so $\nn = \{1,2,\ldots,n-1\}$, we have $\Sn = S_n$.)

For any permutation $w\in S_n$, define
\[
  r_w(p,q) = \#\{i \leq p \,|\, w(i)\leq q \}.
\]
This is the rank of the upper-left $p\times q$ submatrix of the permutation matrix corresponding to $w$ (which has $1$'s in positions $(i,w(i))$ and $0$'s elsewhere).

The \emph{Bruhat order} on $S_n$ is a partial order which may be defined by $v \leq w$ iff $r_v(p,q)\geq r_w(p,q)$ for all $1\leq p,q\leq n$.  The \emph{length} of $w$ is the number
\[
  \ell(w) = \#\{ i<j \,|\, w(i)>w(j) \}.
\]
The Bruhat order is ranked by length: $v\leq w$ implies $\ell(v)\leq\ell(w)$.  There is a unique permutation of greatest length, denoted $w_\circ$; it is given by $w_\circ(i) = n+1-i$.

The subset $\Sn\subseteq S_n$ is also characterized as the set of minimal-length coset representatives for $S_n/S_\nn$.  Given any permutation $w\in S_n$, one gets an element $\bar{w}$ of $\Sn$ by taking the representative of $w S_\nn$ of smallest length; concretely, this means sorting the entries of each block $[w(n_{i}+1),\,w(n_{i}+2),\ldots,\,w(n_{i+1})]$ into increasing order.

Bruhat order induces a partial order on the subset $\Sn$, and the unique permutation of greatest length in $\Sn$ is $w^\circ := \bar{w_\circ}$, given explicitly by 
\[
  w^\circ = [ n_m+1,n_m+2,\ldots,n,\; n_{m-1}+1, n_{m-1}+2, \ldots,n_m, \ldots,n_2,\; 1,2,\ldots,n_1].
\]
Its length is $\ell(w^\circ) = \dim\Fl(\nn) = \sum_{i=1}^m n_i(n_{i+1}-n_i)$.

\subsection{Schubert cells and Schubert varieties}

Fix a basis $\{e_1,\ldots,e_n\}$ for $V=\CC^n$.  The \emph{standard flag} $E_\bullet$ is defined by $E_i = \Span\{e_1,\ldots,e_i\}$ and the \emph{opposite flag} $\tilde{E}_\bullet$ is defined by $\tilde{E}_i = \Span\{e_n,e_{n-1},\ldots,e_{n-i+1}\}$.  
The \emph{Schubert varieties} in $\Fl(\nn)$ can be described in several ways. Fixing the standard flag $E_\bullet$, we define $\Omega_w = \Omega_w(E_\bullet) \subseteq \Fl(\nn)$ by
\[
  \Omega_w = \{ V_\bullet \in \Fl(\nn) \,|\, \rk(E_q \to \CC^n/V_p) \leq r_w(n_p,q) \text{ for all } 1\leq q\leq n, n_p\in\nn\}
\]
This is the closure of the \emph{Schubert cell}
\[
  \Omega_w^\circ = \{ V_\bullet \in \Fl(\nn) \,|\, \rk(E_q \to \CC^n/V_p) = r_w(n_p,q) \text{ for all } 1\leq q\leq n, n_p\in\nn\},
  \]
which is isomorphic to $\AA^{\dim\Fl(\nn)-\ell(w)}$.  Replacing $E_\bullet$ with the opposite flag $\tilde{E}_\bullet$, we obtain the \emph{opposite Schubert varieties} $\tilde\Omega_w$ and the \emph{opposite Schubert cells} $\tilde\Omega_w^\circ$. Here, the Bruhat order on $\Sn$ is identified with the order induced by inclusions of Schubert varieties: $v\leq w$ iff $\Omega_v \supseteq \Omega_w$ iff $\tilde{\Omega}_v\subseteq\tilde\Omega_w$.

Regarding $E_\bullet$ as a flag of trivial vector bundles on $\Fl(\nn)$, the Schubert variety may be defined equivalently as the degeneracy locus of points $x\in\Fl(\nn)$ where $\rk_x(E_q \to Q_p)\leq r_w(n_p,q)$, and similarly for the opposite Schubert varieties.

The Schubert cells give an (affine) cell decomposition of the partial flag variety, so the classes of Schubert varieties form a linear basis for the cohomology ring of $\Fl(\nn)$:
\[
  H^{*}\Fl(\nn) = \bigoplus_{w\in \Sn} \ZZ\cdot [\Omega_w].
\]
Since $\Omega_w$ has codimension $\ell(w)$, its class lies in $H^{2\ell(w)}\Fl(\nn)$.  

The Schubert classes $[\Omega_w]$ are written in terms of this presentation as \emph{Schubert polynomials} $\Sch_w(x)$ \cite{bgg}\cite{d}\cite{lsch}.  For the moment, consider the complete flag variety $\Fl(n)$.  Let $w_\circ$ be the longest permutation in $S_n$, and write $w=w_\circ s_{i_1}\ldots s_{i_k}$, where  $s_i$ is the simple transposition $(i, i+1)$ and $k= \binom{n}{2}-\ell(w)$.  For $1\leq i< n$, let $\partial_i$ be the divided difference operator acting on $\ZZ[x_1,\ldots,x_n]$ by
\begin{equation}\label{e:divdiff}
 \partial_i P = \frac{P(x_1,\ldots,x_n)-P(x_1,\ldots,x_{i-1},x_{i+1},x_i,x_{i+2},\ldots,x_n)}{x_i-x_{i+1}}.
\end{equation}

Then the Schubert polynomials are defined by 
\[
  \Sch_w(x)=\partial_{i_k}\cdots \partial_{i_1} (x_1^{n-1}x_2^{n-2}\ldots x_{n-1}).
\]
The \emph{Giambelli formula} gives $[\Omega_w]$ in terms of the presentation.  For the complete flag variety $\Fl(\CC^n)$, we have for any $w\in S_n$,
\[
  [\Omega_w]=\Sch_w(x) \text{ as classes in } H^*(\Fl(\CC^n)).
\]

For $w\in\Sn$, $\Sch_w(x)$ can be written as a polynomial in the classes $\sigma_i^j$ from Theorem~\ref{t:pres}.  We write this polynomial as $\Sch_w^\nn(\sigma)$, and for partial flag varieties, we have

\begin{theorem}
For any $w\in\Sn$, $[\Omega_w]=\Sch_w^\nn(\sigma)$ as classes in $H^*(\Fl(\nn))$.
\end{theorem}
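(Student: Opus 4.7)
The plan is to deduce the partial-flag Giambelli from the complete-flag case via the smooth projection $\pi\colon \Fl(\CC^n)\to\Fl(\nn)$ that forgets those intermediate subspaces $V_i$ with $n_i\notin\nn$. Three ingredients combine: $\pi^*$ is injective on cohomology, the ring generators pull back correctly, and the Schubert classes pull back correctly when $w\in\Sn$.

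First I would check injectivity: $\pi$ is a Zariski-locally trivial fiber bundle whose fiber is a product of complete flag varieties, so Leray-Hirsch (or the compatible cell decomposition by Schubert cells) suffices. Next, because the tautological quotients $Q_j$ on $\Fl(\nn)$ pull back to their counterparts on $\Fl(\CC^n)$, the Chern roots of $\ker(Q_j\to Q_{j-1})$ become $x_{n_{j-1}+1},\dots,x_{n_j}$, giving $\pi^*\sigma_i^j = e_i(x_{n_{j-1}+1},\dots,x_{n_j})$. For the Schubert classes, I claim that when $w\in\Sn$ every rank condition $r_w(p,q)$ with $p\notin\nn$ is already implied by the conditions with $p\in\nn$, because $w$ is the minimum-length representative of its coset $wS_\nn$. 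This makes the preimage under $\pi$ of the partial-flag Schubert variety $\Omega_w\subseteq\Fl(\nn)$ equal scheme-theoretically to the complete-flag Schubert variety $\Omega_w\subseteq\Fl(\CC^n)$ (both reduced, of the same codimension $\ell(w)$), so flatness of $\pi$ yields $\pi^*[\Omega_w] = [\Omega_w]$ in the respective cohomology rings.

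With these ingredients the theorem follows quickly. For $w\in\Sn$ the permutation has no descents inside any block $[n_{j-1}+1,n_j]$, so by the standard symmetry property of Schubert polynomials ($\Sch_w$ is symmetric in $x_i,x_{i+1}$ iff $i$ is not a descent of $w$), $\Sch_w(x)$ is symmetric in each block $x_{n_{j-1}+1},\dots,x_{n_j}$. It therefore admits a unique expression $\Sch_w^\nn(\sigma)$ as a polynomial in the elementary symmetric polynomials of these blocks, characterized by $\pi^*\Sch_w^\nn(\sigma) = \Sch_w(x)$. Combining with the complete-flag Giambelli $\Sch_w(x) = [\Omega_w]$ in $H^*(\Fl(\CC^n))$ and the pullback identity above gives $\pi^*\Sch_w^\nn(\sigma) = \pi^*[\Omega_w]$, and injectivity of $\pi^*$ closes the argument.

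The main obstacle I anticipate is the preimage identity: cleanly verifying that all rank conditions with $p\notin\nn$ follow from those with $p\in\nn$ is a standard combinatorial statement about essential sets of minimal coset representatives, but deserves careful treatment. Should a direct verification prove cumbersome, one could instead induct downward in Bruhat order within $\Sn$ via divided differences $\partial_i$ for $i\notin\nn$ (which act on $H^*(\Fl(\nn))$ via the presentation in Theorem~\ref{t:pres}), starting from the top Schubert class $[\Omega_{w^\circ}]$, where the Giambelli identity can be verified directly.
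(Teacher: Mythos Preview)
The paper states this theorem as a classical background result (in \S\ref{s:back}) and gives no proof, so there is nothing in the paper to compare against.  Your approach via pullback along the forgetful map $\pi\colon\Fl(\CC^n)\to\Fl(\nn)$ is the standard one and is correct.

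On the obstacle you anticipate: rather than verifying combinatorially that the rank conditions at $p\notin\nn$ are implied by those at $p\in\nn$, you can bypass the essential-set analysis entirely using the $B$-orbit description of Schubert cells.  The preimage $\pi^{-1}(\Omega_w^{\Fl(\nn)})$ is $B$-invariant (since $\pi$ is $B$-equivariant), closed, and irreducible (since $\pi$ is a Zariski-locally trivial bundle with irreducible fibers over the irreducible base $\Omega_w^{\Fl(\nn)}$), hence is some Schubert variety $\Omega_v\subseteq\Fl(\CC^n)$.  The inclusion $\Omega_w^{\Fl(\CC^n)}\subseteq\pi^{-1}(\Omega_w^{\Fl(\nn)})$ is immediate from the definitions (the latter is cut out by a subset of the rank conditions), and both sides have codimension $\ell(w)$; this forces $v=w$.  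This yields $\pi^*[\Omega_w]=[\Omega_w]$ directly, and the rest of your argument goes through.
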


The Schubert cells may also be described as orbits, and we will need this point of view.  The group $GL_n$ acts on $\Fl(\nn)$ via its action on $\CC^n$, and the parabolic subgroup $P$ of staircase (block upper-triangular) matrices fixes the standard flag.  This gives rise to an isomorphism $\Fl(\nn) \isom GL_n/P$ with a decomposition into Schubert cells indexed by permutations $w\in\Sn$.  Given $w\in \Sn$, let $p(w) \in \Fl(\nn)$ be the flag $(F_m \subset \cdots \subset F_1 \subset V)$ with $F_i = \Span\{ e_{w(n)}, e_{w(n-1)}, \ldots, e_{w(n_i+1)} \}$.  We have
\begin{align*}
 \Omega_w^\circ &= B\cdot p(w) \\
\intertext{and}
 \tilde\Omega_w^\circ &= \tilde{B}\cdot p(\bar{w_\circ w}) ,
\end{align*}
where $B$ and $\tilde{B}$ are the groups of upper- and lower-triangular matrices, respectively.  
The cells $\Omega_w$ and $\tilde\Omega_{\bar{w_\circ w}}$ intersect transversally in the point $p(w)$, and it follows from this (together with dimension considerations) that the classes $[\tilde\Omega_{\bar{w_\circ w}}]$ form a Poincar\'e dual basis: writing $\pi:\Fl(\nn)\to\pt$,
\[
   \pi_*( [\Omega_w]\cdot [\tilde\Omega_{\bar{w_\circ v}}] ) = \delta_{w,v}.
\]
(In fact, $[\tilde\Omega_{w}] = [\Omega_w]$ in $H^*\Fl(\nn)$, but since this does not hold in equivariant cohomology, we prefer to use distinct notation.)  To emphasize this duality, we often write $w^\vee = \bar{w_\circ w}$.  Note that $\ell(w^\vee) = \dim \Fl(\nn)-\ell(w)$.

\subsection{Equivariant cohomology}\label{ss:eqcoh}

Let $T\isom (\CC^*)^n$ be a torus.  One can find a contractible space $\EE{T}$ on which $T$ acts freely, and the quotient $\BB{T}=\EE{T}/T$ is then unique up to homotopy.  The \emph{equivariant cohomology} of a space $X$ equipped with a $T$-action is defined by
\[
  H_T^*X = H^*(\EE{T} \times^T X),
\]
where $Y\times^T Z$ denotes the quotient of $Y\times Z$ by the relation $(y\cdot t, z)\sim (y,t\cdot z)$.  The map $\EE{T}\times^T X \to \BB{T}$ makes $H_T^*X$ an algebra over
\[
  \Lambda_T = H_T^*(\pt) = H^*(\BB{T}) \isom \ZZ[t_1,\ldots,t_n].
\]

The spaces $\EE{T}$ are infinite-dimensional, but one can find finite-dimensional algebraic varieties which serve as ``approximations.''  To describe these, we will need to pay attention to the isomorphism $T\isom(\CC^*)^n$.  Let $M = \Hom(T,\CC^*)$ be the character group of $T$, so $M\isom\ZZ^n$, and there is the \emph{standard basis} $t_1,t_2,\ldots,t_n$.  We will also use the \emph{positive basis}
\[
  \alpha_1, \ldots, \alpha_{n-1}, \alpha_n
\]
for $M$, where $\alpha_i = t_i-t_{i+1}$ for $i<n$, and $\alpha_n=t_n$.  (The reason for this choice will become evident in \S\ref{s:mixing}.)  

Now take $\EE$ to be $(\CC^N\setminus\{0\})^n$, for $m\gg 0$.  With $T$ acting on the $n$ factors via the positive basis, we set
\[
  \BB = \EE/T = (\PP^{N-1})^n.
\]
These spaces approximate $\EE{T}\to\BB{T}$ in the sense that
\[
  H^k(\EE\times^T X) = H^k(\EE{T}\times^T X) = H_T^kX
\]
for all sufficiently small $k$, and there are compatible inclusion maps as $N\to\infty$.  See \cite{eg} for details on approximation spaces in equivariant cohomology; the key point for our purposes is that one can carry out any given computation in $H_T^*X$ using $H^*(\EE\times^T X)$.

As a matter of notation, given a $T$-space $X$, we will denote the corresponding approximation space by a bold letter $\Xx=\EE\times^T X$, always understanding some fixed $N\gg 0$.

We will also need to consider certain linear subspaces of $\BB$.  Specifically, for each integer $j$ with $0\leq j\leq N-1$, fix transverse linear subspaces $\PP^{N-1-j}$ and $\tilde\PP^{j}$ inside $\PP^{N-1}$, and for a multi-index of such integers $J = (j_1,\ldots,j_n)$, set
\[
  \BB^J = \PP^{N-1-j_1} \times \cdots \times \PP^{N-1-j_n} \quad \text{ and } \quad 
  \BB_J = \tilde\PP^{j_1} \times \cdots \times \tilde\PP^{j_n}.
\]
Thus $\dim\BB_J = \codim(\BB^J,\BB) = |J| = j_1+\cdots+j_n$.  These subspaces carry the effective equivariant classes in $H_T^*(\pt)$; hence their significance:
\begin{align}\label{e:t-class}
 [\BB^J] &= (-\alpha_1)^{j_1} \cdots  (-\alpha_n)^{j_n} \quad \text{ in }\quad H^*\BB = H_T^*(\pt) .
\end{align}

Finally, let $\Xx_J$ and $\Xx^J$ denote the preimages of $\BB_J$ and $\BB^J$, respectively, under the projection $\Xx \to \BB$.  

Let $\pi^\BB$ be the map $\BB\to\pt$.  Note that any polynomial $c(t) \in H_T^*(\pt) = \ZZ[t_1,\ldots,t_n]$ can be written as
\begin{equation}\label{e:coefficient}
  c(t) = \sum_J c_J \, (-\alpha_1)^{j_1}\cdots(-\alpha_{n})^{j_{n}},
\end{equation}
where $c_J = \pi^\BB_*( c(t)\cdot[\BB_J] )$.  (This is just Poincar\'e duality on $\BB$.)

In particular, we have:
\begin{lemma}\label{l:coefficient}
Suppose $c(t) = \pi^T_*(\sigma)$, for some class $\sigma \in H_T^*X$, where $\pi^T_* \colon H_T^*X \to H_T^*(\pt)$ is the equivariant pushforward.  Identify $\pi^T$ with the corresponding projection $\Xx \to \BB$, and let $\pi^{\Xx}$ be the map $\Xx \to \pt$.  Then the coefficient $c_J$ appearing in \eqref{e:coefficient} is equal to $\pi^{\Xx}_*(\sigma \cdot [\Xx_J])$.
\end{lemma}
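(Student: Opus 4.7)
The plan is a direct computation using the projection formula for the bundle map $\pi^T \colon \Xx \to \BB$, together with the definition of $c_J$ recalled immediately above the lemma. Since $\pi^{\Xx}$ factors as $\pi^{\BB} \circ \pi^T$, it suffices to rewrite $\pi^{\Xx}_*(\sigma \cdot [\Xx_J])$ in a form involving $\pi^T_*(\sigma) = c(t)$ and $[\BB_J]$, and then recognize the result as $\pi^{\BB}_*(c(t)\cdot [\BB_J]) = c_J$.

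First I would observe that by the very definition of $\Xx_J$ as the preimage $(\pi^T)^{-1}(\BB_J)$, and because $\pi^T\colon \Xx \to \BB$ is a locally trivial fiber bundle with fiber $X$, we have the pullback identity
\[
  [\Xx_J] = (\pi^T)^*[\BB_J]
\]
in $H^*\Xx$. The transversality built into the choice of the linear subspaces $\PP^{N-1-j}$ and $\tilde\PP^{j}$ ensures that $\BB_J$ is smooth of the expected codimension inside $\BB$, and flatness (equivalently, local triviality) of the approximation bundle $\Xx \to \BB$ then guarantees that pulling back the fundamental cycle of $\BB_J$ gives the fundamental cycle of its preimage. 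With this identity in hand, the projection formula applied to the proper map $\pi^T$ yields
\[
  \pi^{\Xx}_*(\sigma \cdot [\Xx_J]) \;=\; \pi^{\BB}_*\pi^T_*\!\bigl(\sigma \cdot (\pi^T)^*[\BB_J]\bigr) \;=\; \pi^{\BB}_*\!\bigl(\pi^T_*(\sigma)\cdot [\BB_J]\bigr) \;=\; \pi^{\BB}_*\!\bigl(c(t)\cdot[\BB_J]\bigr),
\]
which is precisely $c_J$ by the formula given just before the lemma.

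The only step with any content is the pullback identity $[\Xx_J] = (\pi^T)^*[\BB_J]$, and this is essentially immediate from the bundle structure of $\Xx \to \BB$; everything else is a formal manipulation. I do not anticipate a serious obstacle. The lemma is really an organizational statement that bridges two ways of extracting the coefficient $c_J$: algebraically, via Poincar\'e duality on the base $\BB$, and geometrically, via intersection with $[\Xx_J]$ on the total space $\Xx$.
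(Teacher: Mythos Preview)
Your argument is correct and matches the paper's intended reasoning: the paper states the lemma without an explicit proof, introducing it with ``In particular'' immediately after the formula $c_J = \pi^{\BB}_*(c(t)\cdot[\BB_J])$, so the projection-formula computation you give is exactly the omitted verification. There is nothing to add.
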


In the present context, $T\isom (\CC^*)^n \subset GL_n$ is the maximal torus of diagonal matrices, acting on $\Fl(n)$.  The equivariant cohomology of the complete flag variety has a well-known presentation
\[
  H_T^*\Fl(\CC^n) = \Lambda_T[x_1,\ldots,x_n]/(e_1(x)-e_1(t),\ldots,e_n(x)-e_n(t)),
\]
where $x_i = c^T_1(\ker(Q_i \to Q_{i-1}))$, and $e_i$ is the $i$th elementary symmetric function.  
More generally, as in \eqref{e:sigma-def1}, define
\begin{equation}\label{e:sigma-def}
  \sigma_i^j=c^T_i(\ker(Q_j\rightarrow Q_{j-1})).
\end{equation}
For the partial flag variety $\flag$, we have
\[
  H_T^*\Fl(\nn) = \Lambda_T[\sigma_1^1,\ldots,\sigma_{n-n_m}^{m+1}]/(\tilde{e}_1(m+1))-e_1(t),\ldots,\tilde{e}_n(m+1)-e_n(t)).
\]

Moreover, the equivariant classes of Schubert varieties form a $\Lambda$-basis for $H_T^*\flag$, and for essentially the same reason as in the classical case, the classes of opposite Schubert varieties are the Poincar\'e dual basis:
\begin{equation}\label{e:eq-duality}
  \pi^T_*( [\Omega_w]^T \cdot [\tilde\Omega_{v^\vee}]^T ) = \delta_{w,v}
\end{equation}
in $\Lambda$.  (This is a stronger statement, since a priori, these classes could pair to a nonzero polynomial in $t$.)

The equivariant Giambelli formula is given by \emph{double Schubert polynomials} $\Sch_w(x,t)$, defined by 
\[
  \Sch_w(x,t) = \sum_{u,v} (-1)^{\ell(v)} \Sch_u(x)\Sch_v(t),
\]
where the sum is over $u,v\in S_{n+1}$ such that $v^{-1}u=w$ and $\ell(u)+\ell(v)=\ell(w)$.  A key property of the double polynomials is that $\partial^t_i\Sch_w(x,t) = -\Sch_{s_i w}(x,t)$ whenever $\ell(s_i w)<\ell(w)$, where $\partial^t_i$ is the divided difference operator (defined in \eqref{e:divdiff}) applied to the $t$ variables.

The equivariant Giambelli formula gives $[\Omega_w]^T$ in terms of the presentation (see, e.g., \cite{knmi}).  For the complete flag variety $\Fl(\CC^n)$, we have 
\[
  [\Omega_w]^T=\Sch_w(x,t) \text{ in } H_T^*(\Fl(\CC^n)).
\]
For $w\in\Sn$, $\Sch_w(x,t)$ can be written as a polynomial in $\sigma_i^j$ and $t_i$, which we write as $\Sch_w^\nn(\sigma,t)$, and for partial flag varieties, we have

\begin{theorem}
For any $w\in\Sn$, $[\Omega_w]^T=\Sch_w^\nn(\sigma,t)$ as classes in $H_T^*(\Fl(\nn))$.
\end{theorem}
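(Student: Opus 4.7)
The plan is to deduce this from the complete-flag equivariant Giambelli formula stated immediately above, via the $T$-equivariant forgetful projection $\pi\colon \Fl(\CC^n)\to\Fl(\nn)$. This map is a smooth fibration whose fibers are themselves products of complete flag varieties, so by Leray--Hirsch $H_T^*\Fl(\CC^n)$ is a free module over $H_T^*\Fl(\nn)$ via $\pi^*$; in particular $\pi^*$ is injective, with image identified as the subring of polynomials symmetric within each Chern-root block $x_{n_{j-1}+1},\ldots,x_{n_j}$.

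I would then show that both sides of the desired equality have the same image under $\pi^*$. For the right-hand side, naturality of Chern classes applied to the bundles $\ker(Q_j\to Q_{j-1})$ gives $\pi^*\sigma_i^j = e_i(x_{n_{j-1}+1},\ldots,x_{n_j})$, so $\pi^*\Sch_w^\nn(\sigma,t)=\Sch_w(x,t)$ in $H_T^*\Fl(\CC^n)$; the substitution is well-defined precisely because $\Sch_w(x,t)$ is $S_\nn$-invariant in the $x$-variables, which follows from the standard identity $\partial_i^x\Sch_w(x,t)=0$ whenever $w(i)<w(i+1)$, applied to the generators of $S_\nn$ thanks to the hypothesis $w\in\Sn$. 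For the left-hand side, the Schubert variety $\Omega_w\subset\Fl(\nn)$ is cut out by rank inequalities $\rk(E_q\to \CC^n/V_p)\leq r_w(n_p,q)$ indexed only by $n_p\in\nn$; I would verify that for $w\in\Sn$ these inequalities, when pulled back to $\Fl(\CC^n)$, are equivalent scheme-theoretically to the full set of inequalities defining $\Omega_w^{\Fl(\CC^n)}$, so that $\pi^{-1}\Omega_w^{\Fl(\nn)}=\Omega_w^{\Fl(\CC^n)}$ and hence $\pi^*[\Omega_w]^T=[\Omega_w^{\Fl(\CC^n)}]^T$.

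Combining the two compatibilities with the complete-flag formula $[\Omega_w^{\Fl(\CC^n)}]^T=\Sch_w(x,t)$ yields $\pi^*[\Omega_w]^T=\pi^*\Sch_w^\nn(\sigma,t)$ in $H_T^*\Fl(\CC^n)$, and injectivity of $\pi^*$ completes the proof.

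The main technical obstacle is the Schubert-variety compatibility above: one must argue that for $w\in\Sn$, the additional rank conditions on $\Fl(\CC^n)$ indexed by intermediate $V_p$'s (with $p\notin\nn$) are automatically implied by those at indices in $\nn$. This reduces to a combinatorial check on the monotonicity of $r_w(p,q)$ in $p$, coupled with the parabolic ascent condition $w(i)<w(i+1)$ for $n-i\notin\nn$ that characterizes $\Sn$; the same combinatorics underlies why $\Sch_w(x,t)$ lies in the invariant subring in the first place, so both compatibilities ultimately rest on the same defining feature of $\Sn$.
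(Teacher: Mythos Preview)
The paper states this theorem as background without giving a proof, citing \cite{knmi}; the surrounding text only records that $\Sch_w(x,t)$ can be rewritten in terms of the $\sigma_i^j$ for $w\in\Sn$, leaving the passage from $\Fl(\CC^n)$ to $\Fl(\nn)$ implicit. Your pullback argument is a correct and natural way to fill this in.

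One simplification: you do not need the scheme-theoretic equality $\pi^{-1}\Omega_w^{\Fl(\nn)}=\Omega_w^{\Fl(\CC^n)}$, nor the combinatorial check you describe. Since $\pi$ is a smooth fiber bundle with irreducible fibers, $\pi^{-1}\Omega_w^{\Fl(\nn)}$ is automatically reduced, irreducible, and of codimension $\ell(w)$. It contains $\Omega_w^{\Fl(\CC^n)}$ because the rank conditions at indices $n_p\in\nn$ are a subset of those defining $\Omega_w^{\Fl(\CC^n)}$; as the latter is also irreducible of codimension $\ell(w)$, the two coincide. This is enough for $\pi^*[\Omega_w]^T=[\Omega_w^{\Fl(\CC^n)}]^T$.

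It is also worth noting that the paper's own later machinery gives an independent route: Corollary~\ref{c:fulton-partial} (the equivariant degeneracy-locus formula for partial flags) applied to the tautological sequence $E_\bullet\hookrightarrow V_{\Fl(\nn)}\twoheadrightarrow Q_\bullet$ on $X=\Fl(\nn)$ yields $[\Omega_w]^T=\Sch_w^\nn(c^T(Q_\bullet),t)$ directly, without passing through the complete flag variety.
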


One can realize $\FFl(\nn) = \EE\times^T\Fl(\nn)$ as a flag bundle $\FFl(\nn;E) \to \BB$, for a vector bundle $E$ on $\BB$.  Specifically, let $pr_i:\BB\to\PP^{N-1}$ be the projection on the $i$th factor, and let $L_i = pr_i^*\OO(-1)$.  (One has $t_i=c_1(L_i)$ in $H_T^2(\pt) = H^2(\BB)$.)  Then set $E_i=L_1\oplus L_2 \oplus \cdots \oplus L_i$ and $E=E_n$, so we obtain a flag of vector bundles $E_\bullet$.  The equivariant Schubert class $[\Omega_w]^T$ is identified with the class of the degeneracy locus $\OOmega_w \subseteq \FFl(E)$.

\subsection{Quantum cohomology}

The \emph{(small) quantum cohomology ring} $QH^*\Fl(\nn)$ is a commutative and associative graded algebra over $\ZZ[\ul{q}]=\ZZ[q_1,\ldots,q_m]$, where $q_i$ is a parameter of degree $n_{i+1}-n_{i-1}$.  As a module, $QH^*\Fl(\nn)$ is simply $\ZZ[\ul{q}]\otimes_\ZZ H^*\Fl(\nn)$, so it has a $\ZZ[\ul{q}]$-basis of Schubert classes $\scl_w$:
\[
  QH^*\Fl(\nn) = \bigoplus_{w\in \Sn} \ZZ[\ul{q}]\cdot \scl_w.
\]
The quantum product is a deformation of the usual cup product.  For permutations $u,v$, define a product by
\begin{equation}
  \scl_u \qp \scl_v = \sum_{w,\dd} \ul{q}^{\dd}\, c_{u,v}^{w,\dd}\, \scl_w,
\end{equation}
where $\dd$ ranges over $(n-1)$-tuples of nonnegative integers.  The \emph{quantum Littlewood-Richardson coefficient} $c_{u,v}^{w,\dd}$ is a \emph{three-point Gromov-Witten invariant}; it may be interpreted informally as the number of maps $f:\PP^1\to\Fl(\nn)$ of degree $\ul{d}$ such that $f(0)$, $f(1)$, and $f(\infty)$ lie in general translates of $\Omega_u$, $\Omega_v$, and $\Omega_{w^\vee}$, respectively.  (A map has \emph{degree} $\ul{d}$ if $f_*[\PP^1] =  d_1\scl_{s_{n_1}^\vee} + \cdots + d_{m}\scl_{s_{n_m}^\vee}$.  Since the classes $\scl_{s_{n_i}^\vee}$ form a basis for $H_2\Fl(\nn)$, this is well-defined.)

The precise definition of $c_{u,v}^{w,\ul{d}}$ is usually phrased in terms of the Kontsevich moduli space of stable maps.  In order to set up notation, we sketch the construction here.  (See, e.g., \cite{fp} for details.)  There is a smooth, proper Deligne-Mumford stack
\[
  \Mbar_{r}(\dd)=\Mbar_{0,r}(\Fl(\nn),\dd),
\]
called the \emph{(Kontsevich) space of stable maps}, which parametrizes data \linebreak $(f,C,\{p_1,\ldots,p_r\})$, where $C$ is a genus-zero curve with marked points $p_1,\ldots,p_r$, and $f:C\to\Fl(\nn)$ is a map of degree $\ul{d}$, and a certain stability condition is imposed.  The space of stable maps has dimension equal to $\dim \Fl(\nn) + \sum_{i=1}^m d_i(n_{i+1}-n_{i-1}) + r-3$, and its coarse moduli space (with which we will tacitly work) is a Cohen-Macaulay projective variety.

This space of stable maps comes with natural evaluation morphisms
\[
  \ev_i:\Mbar_{r}(\dd) \to \Fl(\nn),
\]
for $i=1,\ldots,r$, defined by sending $(f,C,\{p_j\})$ to $f(p_i)$.  It also has a forgetful morphism
\[
 f:\Mbar_{r}(\dd) \to \Mbar_{0,r}
\]
to the space of stable curves, which is a smooth projective variety of dimension $r-3$.

The quantum product is defined using $\Mbar_{3}(\dd)$.  Write $\pi:\Mbar_3(\dd)\to\pt$ for the map to a point.  Now one defines
\begin{equation}\label{e:mbar-defn}
 c_{u,v}^{w,\dd} = \pi_*( (\ev_1^*\scl_u)\cdot(\ev_2^*\scl_v)\cdot(\ev_3^*\scl_{w^\vee}) ),
\end{equation}
where $\alpha\cdot\beta$ denotes the usual cup product in $H^*\Mbar_{3}(\dd)$.

\subsection{Equivariant quantum cohomology}

When a torus $T$ acts on $\Fl(\nn)$, there is an induced action on $\Mbar_{r}(\dd) = \Mbar_{0,r}(\Fl(\nn),\dd)$, so one can define a quantum deformation of $H_T^*\Fl(\nn)$ analogously to the classical case.  Let $\pi^T_*:H_T^*\Mbar_{3}(\dd) \to H_T^*(\pt)$ be the equivariant pushforward.  One defines a product on $QH_T^*\Fl(\nn) = \Lambda[\qq]\otimes_\Lambda H_T^*\Fl(\nn)$ by
\[
  \scl_u \qtp \scl_v = \sum_{w,\dd} \qq^\dd\, c_{u,v}^{w,\ul{d}}(t)\, \scl_w,
\]
where the coefficient is a \emph{three-point equivariant Gromov-Witten invariant}
\[
  c_{u,v}^{w,\dd}(t) = \pi^T_*( \ev_1^*\scl_u \cdot \ev_2^*\scl_v \cdot \ev_3^*\tilde\scl_{w^\vee} ).
\]
(In contrast to the non-equivariant case, it is important to use the \emph{opposite} Schubert class $\tilde\scl_{w^\vee}$ as the third insertion, rather than $\scl_{w^\vee}$.)

As before, this defines an associative product \cite{kim-eq}.  Following \cite[\S5]{mihalcea-positivity}, we will call the polynomials $c_{u,v}^{w,\dd}$ \emph{equivariant quantum Littlewood-Richardson (EQLR) coefficients}, and use this term also for the coefficients defined by associativity:
\begin{equation*}
  \scl_{v_1} \qtp \scl_{v_2} \qtp \cdots \qtp \scl_{v_r} = \sum_{w,\ul{d}} \qq^{\dd} \, c_{v_1,\ldots,v_r}^{w,\dd}(t) \, \scl_w .
\end{equation*}
The proof of associativity given in \cite[\S3.3]{kim-eq} shows that the EQLR coefficients may be described equivalently as
\begin{equation}\label{e:eqlr}
 c_{v_1,\ldots,v_r}^{w,\dd}(t) = \pi^T_*( \ev_1^*\scl_{v_1}\cdots \ev_r^*\scl_{v_r}\cdot \ev_{r+1}^*\tilde\scl_{w^\vee} \cdot f^*[\pt]),
\end{equation}
where $f:\Mbar_{r+1}(\dd) \to \Mbar_{0,r+1}$ is the forgetful map and $[\pt] \in H_T^*\Mbar_{0,r+1}$ is the class of a point, with $T$ acting trivially on the space of stable curves.

\begin{remark} 
The \emph{(r+1)-point equivariant Gromov-Witten invariant} is defined via $r+1$ evaluation maps from $\Mbar_{0,r+1}(\Fl(\nn),\dd)$ to $\Fl(\nn)$ as
\[
 \pi^T_*( \ev_1^*\scl_{v_1}\cdots \ev_r^*\scl_{v_r}\cdot \ev_{r+1}^*\tilde\scl_{w^\vee} ).
\]
While the EQLR coefficients for $r=2$ agree with the corresponding three-point invariants, in general $c_{v_1,\ldots,v_r}^{w,\dd}\neq \pi^T_*( \ev_1^*\scl_{v_1}\cdots \ev_r^*\scl_{v_r}\cdot \ev_{r+1}^*\tilde\scl_{w^\vee} )$.  The situation is the same in the non-equivariant case; see, e.g., \cite[\S10]{fp}.
\end{remark}

A presentation of the ring $QH_T^*\flag$, specializing to one for $QH^*\flag$, is given in \cite[Theorem~2]{kim-eq}.  We will give a different proof of this in Corollary~\ref{c:presentation}.

\section{Universal Schubert polynomials}\label{s:univ-sch}

%
\subsection{Definitions}

Universal double Schubert polynomials were introduced in \cite{fulton} as the solution to a certain degeneracy locus problem.  They specialize to double Schubert polynomials as well as to quantum Schubert polynomials for complete and partial flag varieties \cite{fgp}\cite{cf-partial}.  To describe the universal double Schubert polynomials $\Sch_w(c,d)$, for $w\in S_n$, we first give two formulations of \emph{universal (single) Schubert polynomials}: $\Sch_w(c)$ and $\Sch_w(g)$.  The first form, denoted $\Sch_w(c)$, is a polynomial in independent variables $c_k(l)$ of degree $k$, for $1\leq k\leq l\leq n$; we set $c_0(l)=1$ and $c_k(l)=0$ when $k<0$ or $k>l$.  The second form, denoted $\Sch_w(g)$,
is a polynomial in variables $g_i[j]$ for $i,j\geq 0$, and $i+j\leq n$, with $g_i[j]$ of degree $j+1$.

For $w\in S_{n+1}$, the classical Schubert polynomial $\Sch_w(x)$ can be written uniquely as
\[
  \Sch_w(x) = \sum a_{k_1\ldots k_n}e_{k_1}(1) \cdot\cdots\cdot e_{k_n}(n),
\]
where the sum ranges over sequences $(k_1,\ldots,k_n)$ with $0\leq k_p\leq p$ and $\sum k_p=\ell(w)$,
and where $e_k(l):=e_k(x_1,\ldots,x_l)$ is the $k$th elementary symmetric polynomial in the variables $x_1,\ldots,x_l$.  
Define the universal Schubert polynomial by
\begin{equation}\label{sch-c}
 \Sch_w(c) = \sum a_{k_1\ldots k_n}c_{k_1}(1)\cdot \cdots \cdot c_{k_n}(n).
\end{equation}
When $c_k(l)$ is specialized to $e_k(l)$, the polynomial $\Sch_w(c)$ becomes the classical Schubert polynomial $\Sch_w(x)$. 

The second formulation of universal Schubert polynomials $\Sch_w(g)$ is as follows.  Label the vertices of the Dynkin diagram $(A_{n})$ by $x_1,\ldots, x_n$, and label the edges $g_1[1],\ldots,g_{n-1}[1]$, where $g_i[1]$ connects $x_i$ and $x_{i+1}$.  Now denote by $g_i[j]$ the path covering the $j+1$ consecutive vertices $x_i,\ldots,x_{i+j}$, and define $E_k^l(g)$ to be the sum of all monomials in paths $g_i[j]$ covering exactly $k$ of the verticies $x_1,\ldots, x_l$ with no vertex covered more than once.  When the variables $g$ are understood, we may simply write $E_k^l$.  

Alternatively, consider the $l\times l$ matrix $M_l$ with $g_a[b-a]$ in the $(a,b)$th entry for $1\leq a\leq b\leq l$, $-1$ in the $(a+1,a)$ entries below the diagonal, and zero elsewhere.  Define a polynomial $E_k^l(g)$ in the variables $g_i[j]$ as the coefficient of $T^k$ in the determinant of $M_l+IT$, setting $g_i[0]=x_i$.   Both this and the description of $E_k^l$ in the previous paragraph are equivalent to the inductive definition:
\begin{equation}\label{e:Ekl-inductive}
  E_k^l(g)=E_k^{l-1}(g) + \sum_{j=0}^{k-1} E_{k-j-1}^{l-j-1}(g)\,g_{l-j}[j].
\end{equation}
The universal Schubert polynomial $\Sch_w(g)$ is obtained by substituting $c_k(l)=E_k^l(g)$ into the expression \eqref{sch-c} for $\Sch_w(c)$.

We can now define \emph{universal double Schubert polynomials} in variables $c_k(l)$ and $d_k(l)$ by
\begin{equation}\label{e:univ-double-def}
  \Sch_w(c,d) = \sum_{u,v} (-1)^{\ell(v)} \Sch_u(c)\,\Sch_v(d),
\end{equation}
where the sum is over $u,v\in S_{n+1}$ such that $v^{-1}u=w$ and $\ell(u)+\ell(v)=\ell(w)$.  These polynomials can also be written as $\Sch_w(g,h)$, using variables $g_i[j]$ and $h_i[j]$ obtained by substituting $c_k(l)=E_k^l(g)$ and $d_k(l)=E_k^l(h)$ into $\Sch_w(c,d)$.  Upon setting $h_i[ 0]=y_i$ and $h_i[j]=0$ for all $j>0$, $\Sch_w(c,d)$ specializes to polynomials $\Sch_w(c,y)$; this is equivalent to specializing  $d_k(l)$  to the elementary symmetric polynomial $e_k(y_1,\ldots,y_l)$.  

For the purposes of this paper, we will focus on the specialized double Schubert polynomial $\Sch_w(c,y)$ and its alternative form $\Sch_w(g,y)$, obtained by the substitution $c_k(l)=E_k^l(g)$ into $\Sch_w(c,y)$.  These specializations can be computed inductively from a ``top'' polynomial, using divided difference operators, as in the classical case.  Specifically, for the longest permutation $w_\circ$ in $S_{n+1}$, we have
\[
  \Sch_{w_\circ}(c,y) = \prod_{i=1}^n \left(\sum_{j=0}^i c_{i-j}(i) (-y_{n+1-i})^j\right),
\]
and $\Sch_{s_i w}(c,y) = -\partial^y_i\Sch_w(c,y)$ whenever $\ell(s_iw)<\ell(w)$, where $\partial^y_i$ is the divided difference operator applied to the $y$ variables \cite[{eqs.~(7) and (8)}]{fulton}; see also \cite[pp.~502--503]{cf-partial} and \cite{kima}.  (In a similar fashion, one can also compute the unspecialized versions inductively, but the analogues of divided difference operators are a little more complicated.)

When $c_k(l)$ is also specialized to $e_i(x_1,\ldots,x_l)$, $\Sch_w(c,d)$ becomes the classical double Schubert polynomial $\Sch_w(x,y)$.

\begin{example} For $w=312$, we have
\begin{align*}
\Sch_{312}(x)&= x_1^2= x_1(x_1+x_2)-x_1x_2 = e_1(1)e_1(2)-e_2(2)\\
\Sch_{312}(c)&= c_1(1)c_1(2)-c_2(2)\\
\Sch_{312}(g)&=  x_1(x_1+x_2)-(x_1x_2+g_1[1])\\ 
\Sch_{312}(c,d) &= c_1(1)c_1(2)-c_2(2)-c_1(2)d_1(2)-d_2(2) 
\end{align*}
\end{example}

\begin{remark}
\label{r:c-g-equiv}
The inductive relation \eqref{e:Ekl-inductive} can be inverted to express the variables $g_i[j]$ as polynomials in $c_k(l)$.  In other words, the variables $c_k(l)$ and $g_i[j]$ generate the same polynomial rings, $\ZZ[g]=\ZZ[c]$.  Therefore $\ZZ[g,y]=\ZZ[c,y]$.  For $1\leq k\leq n_l$, the cyclic permutation $\alpha_{k,l} := s_{n_l-k+1}\cdots s_{n_l}$, has universal (single) Schubert polynomial $\Sch_{\alpha_{k,l}}(c) = c_k(l)$, and therefore
$\ZZ[g,y] = \ZZ[\Sch_{\alpha_{k,l}}(c),y]$.  Moreover, the corresponding universal double Schubert polynomials can be written  
\begin{align*}
\Sch_{\alpha_{k,l}}(c,y) &= \sum_{0\leq k'\leq k} (-1)^{k-k'}\Sch_{\alpha_{k',l}}(c)\, \Sch_{s_{n_l-k+1}\cdots s_{n_l-k'}}(y)\\
&=  c_k(l) + \sum_{0\leq k'<  k} (-1)^{k-k'} c_{k'}(l)\, h_{k-k'}(y),
\end{align*}
where $h_k(y)$ is the $k$th complete symmetric polynomial in $y_1,\ldots, y_n$.  Therefore we can recursively write each $c_k(l)$ in terms of polynomials $\Sch_{\alpha_{k,l}}(c,y)$ and variables $y$, and therefore $\ZZ[g,y]=\ZZ[c,y]=\ZZ[\Sch_{\alpha_{k,l}}(c,y),y]$.

In particular, each $g_i[j]$ can be written as a polynomial in the $\Sch_{\alpha_{k,l}}(c,y)$ and $y_1,\ldots,y_n$, with $k\leq j+1$.  
\end{remark}

\begin{remark}\label{r:partial-double}
Several properties of certain of the polynomials $\Sch_w(c,y)$ will be useful in studying partial flag varieties.  Let $\nn$ and $\Sn$ be as in Section \ref{s:back}.
We define polynomials $\Sch_w^{\nn}(g,y)$ as follows. For a permutation $w\in \Sn$, let $\Sch_w^{\nn}$ be the result after setting $g_i[0]=x_i$ and $g_i[j]=0$ if $j>0$ and $i+j\neq n_p$ for some $p$. An alternative definition of $\Sch_w^{\nn}$ is given by performing the substitutions $c_k(l)\to c_k(n_p)$ for $l\in[n_p,n_{p+1})$ into $\Sch_w(c,y)$ and then performing the above substitutions for the $g_i[j]$. The proof of \cite[Proposition 4.3]{fulton} shows that these two constructions yield the same 
$\Sch^\nn_w(g,y)$.  
\end{remark}

Remarks \ref{r:c-g-equiv} and \ref{r:partial-double} yield the following useful lemma.

\begin{lemma}
\label{l:rewrite-g}
Each  $g_i[j]$  can be written as a polynomial in $\Sch_w(g,y)$ and $y_1,\ldots,y_n$, with  $\ell(w)\leq j+1$.  Moreover, when $i+j\in\nn$, $g_i[j]$ can be written as a polynomial in $\Sch^\nn_w(g,y)$ and $y_1,\ldots,y_n$, where $w$ is a permutation in $\Sn$ with $\ell(w)\leq j+1$.
\end{lemma}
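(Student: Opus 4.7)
The plan is to combine Remarks~\ref{r:c-g-equiv} and~\ref{r:partial-double} with a grading argument carried out by a two-step inversion.

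For the first assertion, I would proceed as follows. First, invert the inductive relation \eqref{e:Ekl-inductive} to express each $g_i[j]$ as a polynomial in the $c_k(l)$. Since the families $\{g_i[j]\}$ and $\{c_k(l)\}$ generate the same graded polynomial ring, and since $g_i[j]$ has degree $j+1$ while $c_k(l)$ has degree $k$, only $c_k(l)$ with $k\leq j+1$ can appear in this inversion. Second, use the explicit expansion
\[
\Sch_{\alpha_{k,l}}(c,y) = c_k(l) + \sum_{0\leq k'<k}(-1)^{k-k'} c_{k'}(l)\,h_{k-k'}(y)
\]
recorded in Remark~\ref{r:c-g-equiv} to recursively solve on $k$ for each $c_k(l)$ as a polynomial in $\Sch_{\alpha_{k',l}}(c,y)$ with $k'\leq k$ and in $y_1,\ldots,y_n$. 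Combining the two steps, and using $\Sch_{\alpha_{k,l}}(c,y)=\Sch_{\alpha_{k,l}}(g,y)$ after the substitution $c_k(l)=E_k^l(g)$ together with $\ell(\alpha_{k,l})=k$, yields $g_i[j]$ as a polynomial in $\Sch_w(g,y)$ with $\ell(w)\leq j+1$ and in $y_1,\ldots,y_n$, as required.

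For the second assertion, I would repeat the same two-step inversion inside the partial flag specialization of Remark~\ref{r:partial-double}. When $i+j\in\nn$, the variables surviving specialization are the $g_{i'}[0]=x_{i'}$, the $g_{i'}[j']$ with $i'+j'\in\nn$, and the $c_k(n_p)$. The restriction of \eqref{e:Ekl-inductive} to this set still inverts to write $g_i[j]$ in terms of $c_k(n_p)$ with $k\leq j+1$, and the $y$-expansion of $\Sch_{\alpha_{k,p}}(c,y)$ recursively inverts each $c_k(n_p)$ in terms of $\Sch^\nn_{\alpha_{k',p}}(g,y)$ with $k'\leq k$. Each cyclic permutation $\alpha_{k,p}$ that arises is the natural minimal-length coset representative associated with the Chern class $c_k(n_p)$ on $\Fl(\nn)$, hence lies in $\Sn$.

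The main obstacle is purely bookkeeping: one must verify that the length and degree bounds are indeed preserved at each stage of the double inversion, and confirm in the partial flag case that the cyclic permutations entering the formula are minimal-length coset representatives. Both checks are routine given the machinery already developed in Remarks~\ref{r:c-g-equiv} and~\ref{r:partial-double}.
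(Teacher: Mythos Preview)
Your proposal is correct and follows exactly the approach the paper takes: the paper simply states that the lemma follows from Remarks~\ref{r:c-g-equiv} and~\ref{r:partial-double}, and your two-step inversion (first $g_i[j]$ in terms of $c_k(l)$ with $k\leq j+1$ by degree, then $c_k(l)$ in terms of $\Sch_{\alpha_{k',l}}(c,y)$ with $k'\leq k$ via the unitriangular expansion) is precisely what those remarks establish. The only small point worth noting is that the membership $\alpha_{k,l}\in\Sn$ is already recorded in Remark~\ref{r:special-usp}, so you need not verify it separately.
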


\subsection{Degeneracy locus formula}

Fulton proves that universal double Schubert polynomials give the answer to a degeneracy locus problem.  While not explictly stated in \cite[Theorem 3.7]{fulton}, the formula holds equivariantly. 

Let $T$ act on an algebraic Cohen-Macaulay scheme $X$, and consider maps of equivariant vector bundles
\[
 E_1\rightarrow \cdots \rightarrow E_n \rightarrow F_n \rightarrow \cdots \rightarrow F_1,
\] 
where $E_i$ and $F_i$ are of rank $i$.  Let $\Omega_w$ be the degeneracy locus
\[
 \Omega_w = \{x\in X | \rank_x(E_q\rightarrow F_p)\leq r_w(p,q) \text{ for all }1\leq p,q\leq n\},
\]
and let $\Sch_w(c^T(F_\bullet),c^T(E_\bullet))$ be the image of $\Sch_w(c,d)$ by specializing $c_k(l)$ to the equivariant Chern class $c^T_k(F_l)$, and specializing $d_k(l)$ to $c^T_k(E_l)$.

\begin{theorem}\label{t:fulton}
For $w\in S_n$, we have $[\Omega_w] = \Sch_w(c^T(F_\bullet),c^T(E_\bullet))$ in $H_T^*(X)$ whenever $\codim_X(\Omega_w) = \ell(w)$.
\end{theorem}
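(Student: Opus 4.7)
The strategy is to reduce the equivariant statement to the non-equivariant case already proved by Fulton, by passing to finite-dimensional approximation spaces as reviewed in \S\ref{ss:eqcoh}. Fix an approximation $\EE \to \BB$ so that $H_T^k X = H^k(\EE \times^T X)$ in all degrees relevant to the computation, and set $\Xx = \EE \times^T X$. Since $\EE$ is smooth and $\EE \to \BB$ is a principal $T$-bundle, $\Xx$ is a fiber bundle over $\BB$ with Cohen-Macaulay fibers (namely $X$); because $\BB$ is smooth, $\Xx$ is itself Cohen-Macaulay, so Fulton's theorem applies on $\Xx$.

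Next I would transport the data on $X$ to $\Xx$. The equivariant bundle maps $E_1 \to \cdots \to E_n \to F_n \to \cdots \to F_1$ descend to a corresponding chain of vector bundles $\Xx$-bundles $\mathbf{E}_\bullet$ and $\mathbf{F}_\bullet$ of the same ranks, and the degeneracy locus on $\Xx$ cut out by the same rank conditions is precisely $\mathbf{\Omega}_w := \EE \times^T \Omega_w \subseteq \Xx$: the rank of a map at $(e,x)$ equals the rank at $x$, because the bundles are pulled back along the projection $\Xx \to \Xx/\EE \simeq X/T$ up to the $T$-twisting. Since $\EE \to \BB$ is flat with fibers of constant dimension, $\codim_{\Xx}(\mathbf{\Omega}_w) = \codim_X(\Omega_w) = \ell(w)$, so the expected codimension hypothesis is preserved.

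Under the identification $H_T^*(X) = H^*(\Xx)$ (in the stable range), the equivariant Chern classes $c^T_k(F_l)$ and $c^T_k(E_l)$ are identified with the ordinary Chern classes $c_k(\mathbf{F}_l)$ and $c_k(\mathbf{E}_l)$. Applying Fulton's non-equivariant Theorem~3.7 to $\Xx$ with the bundle chain $\mathbf{E}_\bullet \to \mathbf{F}_\bullet$, we obtain
\[
  [\mathbf{\Omega}_w] = \Sch_w(c(\mathbf{F}_\bullet), c(\mathbf{E}_\bullet)) \quad \text{in } H^*(\Xx).
\]
Translating back via the isomorphism with $H_T^*(X)$ yields the desired equality $[\Omega_w] = \Sch_w(c^T(F_\bullet), c^T(E_\bullet))$.

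The only genuine point to verify is that Fulton's hypotheses survive the Borel construction, namely Cohen-Macaulayness of $\Xx$ and preservation of the codimension of the degeneracy locus; both reduce to the fact that $\Xx \to X/T$ is smooth of relative dimension $\dim\BB$ (since $\EE \to \BB$ is a Zariski-locally-trivial $T$-bundle for the standard approximations used in \cite{eg}). Everything else is formal: compatibility of pullback with degeneracy loci, and the fact that Chern classes of associated bundles on $\Xx$ are equivariant Chern classes of the original bundles on $X$. Since the conclusion is independent of $N$ once $N$ is large enough, the identity indeed holds in $H_T^*(X)$ as stated.
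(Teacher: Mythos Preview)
Your approach is exactly the paper's: reduce to Fulton's non-equivariant theorem by passing to the approximation (mixing) space $\Xx=\EE\times^T X$ and reading the degeneracy locus and Chern classes there. One small wobble: the justification via a map ``$\Xx\to X/T$'' is not the right picture (the $T$-action on $X$ need not be free); the correct argument is the one you already gave earlier, namely that $\Xx\to\BB$ is a locally trivial bundle with fiber $X$ over a smooth base, which preserves Cohen--Macaulayness and codimension.
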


\begin{proof}
Apply \cite[Theorem~3.7]{fulton} to the corresponding degeneracy locus on the mixing space $\Xx$.
\end{proof}

We will be interested in the following situation.  For a sequence of integers $\nn = \{0<n_1<\cdots <n_m<n\}$, consider maps of vector bundles
\begin{equation}\label{e:maps-partial}
  E_1\rightarrow \cdots \rightarrow E_n \rightarrow F_m \rightarrow \cdots \rightarrow F_1
\end{equation}
where $\rank(F_p)=n_p$, and $E_i \isom \CC^i \otimes \OO_X$ is trivial, but has the nontrivial equivariant structure coming from the diagonal action of $T$ on $\CC^n$.  Let $\Omega_w$ be the degeneracy locus
\[
  \Omega_w = \{x\in X | \rank_x(E_q\rightarrow F_p)\leq r_w(n_p,q) \text{ for all } p,q \}.
\]
For $w\in \Sn$, let $\Sch_w^{\nn}(c^T(F_\bullet),t)$ denote the result of specializing $c_k(n_p)$ to $c^T_k(F_p)$ in $\Sch^\nn_w(c,t)$.  We obtain
 
\begin{corollary}\label{c:fulton-partial}
Given maps as in (\ref{e:maps-partial}), for $w\in \Sn$, we have $[\Omega_w]^T = \Sch_w^{\nn}(c^T(F_\bullet),t)$ in $H_T^*(X)$.
\end{corollary}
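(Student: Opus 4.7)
The plan is to reduce Corollary~\ref{c:fulton-partial} to Theorem~\ref{t:fulton} by completing the partial chain~(\ref{e:maps-partial}) to a full chain of equivariant vector bundles
\[
E_1 \to \cdots \to E_n \to \tilde{F}_n \to \cdots \to \tilde{F}_1,
\]
with $\tilde{F}_{n_p} = F_p$ and $\rank(\tilde{F}_l) = l$. Such a filling exists at least locally on $X$ (for instance by choosing kernels of the given surjections), and the resulting equivariant cohomology identity should end up independent of the choice. First I would check that for $w \in \Sn$ the degeneracy locus $\Omega_w$ defined by the partial rank conditions $\rank_x(E_q \to F_p) \leq r_w(n_p,q)$ coincides with the degeneracy locus defined by the full conditions $\rank_x(E_q \to \tilde{F}_l) \leq r_w(l,q)$. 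This is a combinatorial consequence of the restricted-descent structure of $\Sn$: the intermediate bounds $r_w(l,q)$ for $l$ strictly between two consecutive elements of $\nn$ are determined by the boundary values $r_w(n_{p-1},q)$ and $r_w(n_p,q)$, and the intermediate rank conditions are automatically implied once the boundary ones hold.

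Applying Theorem~\ref{t:fulton} to the completed chain then yields
\[
[\Omega_w]^T \;=\; \Sch_w(c^T(\tilde{F}_\bullet),\,c^T(E_\bullet)) \qquad \text{in } H_T^*(X).
\]
Because each $E_i \isom \CC^i \otimes \OO_X$ carries the standard diagonal $T$-action, its equivariant Chern classes are $c^T_k(E_i) = e_k(t_1,\ldots,t_i)$, which exactly matches the specialization $d_k(l) \mapsto e_k(t_1,\ldots,t_l)$ that carries $\Sch_w(c,d)$ to $\Sch_w(c,t)$. For $w \in \Sn$, Remark~\ref{r:partial-double} (the equivariant form of Fulton's Proposition~4.3) says that the formal substitution $c_k(l) \mapsto c_k(n_p)$ for $l \in [n_p,n_{p+1})$ in $\Sch_w(c,t)$ produces $\Sch_w^{\nn}(c,t)$. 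I would then verify that this substitution matches the geometric situation: $\Sch_w(c^T(\tilde{F}_\bullet),t)$ depends on the filling only through the Chern classes $c^T_k(F_p)$ at the jump positions, yielding the desired formula $[\Omega_w]^T = \Sch_w^{\nn}(c^T(F_\bullet),t)$.

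The main obstacle is twofold, but both parts share a single cause: one must (i) identify the partial and full degeneracy loci, and (ii) show that $\Sch_w(c^T(\tilde{F}_\bullet),t)$ is independent of the filling and reduces to the partial polynomial. Both statements reflect the symmetry properties of $\Sch_w$ for $w \in \Sn$ with respect to the blocks determined by $\nn$, which is exactly the combinatorial input Fulton's Proposition~4.3 supplies and which I expect to transfer to the equivariant setting without modification by applying it on the mixing space $\mathbf{X} = \EE \times^T X$.
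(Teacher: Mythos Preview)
Your proposal is essentially the same as the paper's (implicit) argument: the paper gives no proof of Corollary~\ref{c:fulton-partial}, treating it as a direct consequence of Theorem~\ref{t:fulton} together with the partial-flag polynomial identity in Remark~\ref{r:partial-double} (i.e., \cite[Proposition~4.3]{fulton}), transferred to the equivariant setting by working on the mixing space $\Xx$.

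One small point worth tightening: you describe filling in the intermediate bundles $\tilde F_l$ ``at least locally,'' but Theorem~\ref{t:fulton} needs global bundles on $X$, and in the applications (e.g., on the quot scheme, where the maps $\Aa_{i+1}^*\to\Aa_i^*$ are not surjective) no canonical global filling exists.  The cleanest fix---and what Fulton actually does---is not to fill on $X$ at all: pass to the mixing space $\Xx$ and invoke \cite[\S4]{fulton} directly, where the partial-flag degeneracy locus formula is already proved (via a universal/flag-bundle argument on which the filling is automatic).  Your own final paragraph already points in this direction, so this is a matter of streamlining the write-up rather than repairing a genuine gap.
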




Fix $\nn$.  As in Remark~\ref{r:c-g-equiv}, the polynomial ring generated by $c_k(n_p)$ (for $1\leq k\leq n_p$ and $1\leq p\leq m$) is equal to the polynomial ring generated by $g_i[j]$ (for $i+j=n_p$ and $1\leq p\leq m$).  Using the corresponding identification $\ZZ[c,y]=\ZZ[g,y]$, consider the map $b: \ZZ[c,y] \to H_T^*X$ defined by $c_k(n_p) \mapsto c^T_k(F_p)$ and $y_i \mapsto t_i$.  We define classes $Q_i[j]$ in $H_T^{2(j+1)}(X)$ by
\begin{equation}\label{e:equiv-CQ}
  Q_i[j] = b(g_i[j]).
\end{equation}
With this notation, we can write $[\Omega_w]^T= \Sch_w^{\nn}(Q,t)$ in $H_T^*(X)$.  Since $c^T_k(F_l)$ is symmetric in $x_{n_{j-1}+1},\ldots,x_{n_j}$ for every $1\leq j\leq l$, it can be written as a polynomial in variables $\sigma_i^j$ and $Q_i[j]$, for $i+j=n_p$ and $1\leq p \leq m$.  We denote this polynomial by $\tilde{E}_k^l(\sigma)$ or $\widetilde{E}_k^l(Q)$.


With this $\tilde{E}_k^l(Q)$, and $Q_i[j]$ as defined in \eqref{e:equiv-CQ}, we have the following:

\begin{lemma}
\label{l:chern-quotient}
Given equivariant maps of vector bundles $F_{m+1} \rightarrow \cdots \rightarrow F_1$
with $\rank(F_i)=n_i$ and $c_k(F_l)=\tilde{E}_k^l(Q)$, for $1\leq i\leq n_{l+1}-n_l$, we have
\[
c^T_i(\ker(F_{l+1}\rightarrow F_l)) = Q_{n_{l+1}-i+1}[i-1]
\]
for $n_{l+1}-n_l<i\leq n_{l+1}-n_{l-1}$.
\end{lemma}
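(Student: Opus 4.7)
The strategy is to match two expressions for $c^T_i(\ker(F_{l+1}\to F_l))$: one from the Whitney sum formula, and one obtained by inverting the recurrence \eqref{e:Ekl-inductive} and applying the homomorphism $b$ of \S\ref{s:univ-sch}. From the short exact sequence $0 \to \ker(F_{l+1}\to F_l) \to F_{l+1} \to F_l \to 0$ and $c^T(F_{l+1}) = c^T(F_l)\cdot c^T(\ker(F_{l+1}\to F_l))$, one gets the recursive identity
\[
c^T_i(\ker(F_{l+1}\to F_l)) = c^T_i(F_{l+1}) - c^T_i(F_l) - \sum_{j=1}^{i-1} c^T_j(F_l)\, c^T_{i-j}(\ker(F_{l+1}\to F_l)),
\]
with the convention that $c^T_k(F)=0$ whenever $k$ exceeds the rank of $F$.

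On the universal side, the recurrence \eqref{e:Ekl-inductive} at $l=n_{l+1}$, $k=i$ inverts immediately (the coefficient of the ``last'' unknown is $E_0=1$) to give
\[
g_{n_{l+1}-i+1}[i-1] = \bigl(E_i^{n_{l+1}} - E_i^{n_{l+1}-1}\bigr) - \sum_{j'=0}^{i-2} E_{i-j'-1}^{n_{l+1}-j'-1}\, g_{n_{l+1}-j'}[j'].
\]
Applying $b$ and using the hypothesis $c^T_k(F_l)=\tilde{E}_k^l(Q)$ together with Remark \ref{r:partial-double}---which tells us $b(E_k^{l'}(g))=c^T_k(F_p)$ whenever $l'\in[n_p,n_{p+1})$---the right-hand side becomes $c^T_i(F_{l+1}) - c^T_i(F_l) - \sum_{j'=0}^{i-2} c^T_{i-j'-1}(F_{p(j')})\, Q_{n_{l+1}-j'}[j']$, where $p(j')$ is determined by $n_{l+1}-j'-1\in[n_{p(j')},n_{p(j')+1})$. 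The partial flag vanishings $g_a[b]=0$ (for $b\geq 1$, $a+b\notin\nn$) are what collapse the various intermediate terms into this clean form.

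I then proceed by induction on $i$. The base case $i=1$ gives $Q_{n_{l+1}}[0]=c^T_1(F_{l+1})-c^T_1(F_l)$, which matches $c^T_1(\ker(F_{l+1}\to F_l))$. For the inductive step, substituting $Q_{n_{l+1}-j+1}[j-1]=c^T_j(\ker(F_{l+1}\to F_l))$ for $j<i$ into the recursion and re-indexing via $s=i-j'-1$ recovers the Whitney recursion term by term, using $c^T_s(\ker(F_{l+1}\to F_l))=0$ for $s$ exceeding the kernel's rank $n_{l+1}-n_l$ to annihilate the terms with $p(j')<l$. This yields $Q_{n_{l+1}-i+1}[i-1]=c^T_i(\ker(F_{l+1}\to F_l))$ throughout $1\leq i\leq n_{l+1}-n_{l-1}$; in particular, for $n_{l+1}-n_l<i\leq n_{l+1}-n_{l-1}$ both sides vanish, which is the content of the lemma. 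The main obstacle is coordinating the two recursions so that the $p(j')$-dependence on the universal side meshes correctly with Whitney: the key point is that any auxiliary term involving $F_{p(j')}$ with $p(j')<l$ appears only as a multiplier of a higher $Q_{n_{l+1}-j'}[j']$ that vanishes by the inductive hypothesis.
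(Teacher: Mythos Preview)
Your argument has a genuine gap: you assume a short exact sequence $0 \to \ker(F_{l+1}\to F_l) \to F_{l+1} \to F_l \to 0$, and later that $c^T_s(\ker(F_{l+1}\to F_l))=0$ for $s>n_{l+1}-n_l$. Both of these require $F_{l+1}\to F_l$ to be a surjection of vector bundles, which is not among the hypotheses of the lemma and fails in the principal application. Indeed, the paper applies Lemma~\ref{l:chern-quotient} to the sequence $\Aa_{l+1}^*\to \Aa_l^*$ on the hyperquot scheme (see \eqref{e:q-vars}), where these maps are explicitly noted \emph{not} to be surjective; there the class $Q_{n_{l-1}+1}[n_{l+1}-n_{l-1}-1]=c^T_{n_{l+1}-n_{l-1}}(\ker(\Aa_{l+1}^*\to \Aa_l^*))$ is nonzero---it is precisely the class that accounts for the quantum parameter $q_l$ (Lemma~\ref{l:single-computation-Q}). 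So your concluding sentence, ``for $n_{l+1}-n_l<i\leq n_{l+1}-n_{l-1}$ both sides vanish, which is the content of the lemma,'' is the opposite of what the lemma asserts: the content is the identity itself, and the vanishing is the \emph{additional} consequence recorded separately as Lemma~\ref{l:Q-zero} under the extra hypothesis of surjectivity.

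Concretely, the mismatch in your induction is real and cannot be dismissed. At step $i=n_{l+1}-n_l+2$, the Whitney side contributes a term $c_1^T(F_l)\,c^T_{n_{l+1}-n_l+1}(\ker)$, while the universal side contributes $c_1^T(F_{l-1})\,Q_{n_l}[n_{l+1}-n_l]$; by the previous step $Q_{n_l}[n_{l+1}-n_l]=c^T_{n_{l+1}-n_l+1}(\ker)$, but the coefficients $c_1^T(F_l)$ and $c_1^T(F_{l-1})$ differ in general, so the two recursions do not agree unless that common factor vanishes. You cannot resolve this by declaring $c^T_{n_{l+1}-n_l+1}(\ker)=0$, for the reason above. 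In short, your Whitney comparison only proves the identity in the range $1\le i\le n_{l+1}-n_l+1$; to reach the range actually claimed in the lemma you must work with the genuine (non-surjective) kernel and track the cokernel contribution, or argue directly from the formal inversion without appealing to rank-based vanishing.
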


Moreover, if $F_b\rightarrow F_a$ is a surjection of vector bundles for some $a<b$, then $F_{b'}\rightarrow F_a$ is also a surjection of bundles for $a\leq b'\leq b$.  From Lemma \ref{l:chern-quotient}, \cite[Proposition 6.2]{chen}, and Remark \ref{r:partial-double}, we obtain

\begin{lemma}\label{l:Q-zero} If $F_b\rightarrow F_a$ is a surjection of vector bundles, then $Q_i[j]=0$ for all $i<n_a+1\leq i+j\leq n_b$.

\end{lemma}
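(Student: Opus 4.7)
My plan is to bootstrap from the non-equivariant statement \cite[Proposition~6.2]{chen}, with Lemma \ref{l:chern-quotient} supplying the geometric input and Remark \ref{r:partial-double} bridging the combinatorics to the partial flag setting.

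I would first dispatch the portion of the claimed range that is directly accessible from Lemma \ref{l:chern-quotient}. Because $F_b\to F_a$ factors as $F_b\to F_{l+1}\to F_l\to F_a$ for every $a\le l<b$, each intermediate map $F_{l+1}\to F_l$ is itself surjective, so $\ker(F_{l+1}\to F_l)$ is a genuine equivariant vector bundle of rank exactly $n_{l+1}-n_l$, and its equivariant Chern classes vanish above that rank. Lemma \ref{l:chern-quotient} then translates these vanishings into $Q_i[j]=0$ whenever $i+j=n_{l+1}$ with $n_{l-1}+1\le i\le n_l$ and $a\le l<b$.

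To cover the remaining pairs $(i,j)$ in the lemma — those with $i\le n_{a-1}$ but $i+j\in[n_a+1,n_b]\cap\nn$ — I would invoke \cite[Proposition~6.2]{chen}. The proof there expresses each such $g_i[j]$ as a polynomial in $\ZZ[g]=\ZZ[c]$ in those variables $g_{i'}[j']$ that are already handled in the first step. Because this is a formal identity in $\ZZ[g,y]$ with integer coefficients, it transports along the ring homomorphism $b\colon\ZZ[g,y]\to H_T^*X$ without modification, producing the equivariant vanishing $Q_i[j]=0$ in $H_T^*X$. Remark \ref{r:partial-double} ensures that the partial-flag specialization conventions used here match those of \cite{chen}, so no discrepancies arise in passing between $\Sch_w(g,y)$ and $\Sch_w^\nn(g,y)$.

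The main obstacle will be confirming that the argument in \cite[Proposition~6.2]{chen} is genuinely a formal identity in $\ZZ[g]$ and does not implicitly make use of cohomological relations in the underlying variety that fail to lift equivariantly. Should the non-equivariant proof instead rely on identities peculiar to the $y=0$ setting, one would need to supplement it using the inversion of the inductive relation \eqref{e:Ekl-inductive} recorded in Remark \ref{r:c-g-equiv}, writing each $g_i[j]$ as a polynomial in the partial double Schubert polynomials $\Sch^\nn_w(g,y)$ with $\ell(w)\le j+1$ via Lemma \ref{l:rewrite-g}, and then tracking the $y$-corrections to argue term-by-term that every contributing $\Sch^\nn_w(Q,t)=[\Omega_w]^T$ already lies in the ideal generated by the vanishings produced in the first step.
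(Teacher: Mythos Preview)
Your overall strategy---deducing the equivariant statement from the non-equivariant \cite[Proposition~6.2]{chen} via a formal identity in $\ZZ[g,y]$, with Lemma~\ref{l:chern-quotient} and Remark~\ref{r:partial-double} as bridges---is exactly the approach the paper takes. The paper's proof is in fact nothing more than the sentence preceding the lemma together with the citation of those three ingredients.

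However, your first step contains an error. From the surjectivity of $F_b\to F_a$ one may only conclude that each composite $F_{b'}\to F_a$ is surjective for $a\le b'\le b$ (as the paper observes in the sentence immediately preceding the lemma), \emph{not} that each intermediate $F_{l+1}\to F_l$ is surjective. A surjective composition forces only the last arrow to be onto. Consequently your direct appeal to Lemma~\ref{l:chern-quotient}, which concerns $\ker(F_{l+1}\to F_l)$ for consecutive indices, does not produce the vanishings you claim in the range $n_{l-1}+1\le i\le n_l$ when $l>a$.

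This means \cite[Proposition~6.2]{chen} must do more of the work than your two-step decomposition suggests: its argument uses the surjections $F_{b'}\to F_a$ (equivalently, the Whitney relations $c(F_{b'})=c(\ker(F_{b'}\to F_a))\cdot c(F_a)$ with the kernel a genuine bundle of rank $n_{b'}-n_a$) together with the inversion formulas recorded in Remark~\ref{r:c-g-equiv} to force \emph{all} the required $g_i[j]$ to vanish, not merely those with $i>n_{l-1}$. Once you transport that full argument equivariantly---which, as you correctly note, is legitimate because it is a formal identity in $\ZZ[c]=\ZZ[g]$---your proof agrees with the paper's.
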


For $1\leq a\leq b$, define $H_a^b(g)$ to be the polynomial obtained by the substitution $c_k(l)=E_k^l(g)$ into $\det \left( c_{1+j-i}(b+j-1)\right)_{1\leq i,j\leq a}$. This is the universal analogue of the complete symmetric polynomial $h_a(x_1,\ldots,x_b)$.

\begin{remark}
\label{r:special-usp} For $1\leq k\leq n-n_l$, write $\beta_{k,l}$ for the cyclic permutation $\beta_{k,l}:=s_{n_l+k-1}\cdots s_{n_l}$.  This is a Grassmannian permutation with descent at $n_l$ (of length $k$), so by Proposition 4.4 of \cite{fulton}, its (single) universal Schubert polynomial is $\Sch_{\beta_{k,l}}(c) = \det \left( c_{1+j-i}(n_l+j-1)\right)_{1\leq i,j\leq k}$, so that $\Sch_{\beta_{k,l}}(g)=H_k^{n_l}(g)$.  Note that $\alpha_{k,l}$ and $\beta_{k,l}$ are permutations in $\Sn$, and by Remark~\ref{r:partial-double}, for $0\leq k\leq n_{l+1}-n_l$, we have $\Sch^\nn_{\beta_{k,l}}(c)= \det \left( c_{1+j-i}(n_l)\right)_{1\leq i,j\leq k}$.

Moreover, given maps of vector bundles $F_{m+1} \rightarrow \cdots \rightarrow F_1$
with $\rank(F_i)=n_i$ as above, for $0\leq k\leq n_{l+1}-n_l$, by expanding $\det \left( c_{1+j-i}(n_l)\right)_{1\leq i,j\leq k}$ along the top row, we obtain inductively that
 $\Sch^\nn_{\beta_{k,l}}(c)= (-1)^k c_k(-F_l)$.

\end{remark}

\section{Quot schemes and spaces of maps}

Recall that the EQLR coefficients are defined using the Kontsevich compactification of the space of maps $f:\PP^1\to\Fl(\nn)$ of degree $\dd$.  
Using the fact that a degree-$\dd$ map $\PP^1 \to \Fl(\nn)$ corresponds to successive quotient bundles of $V^*_{\PP^1}=V^*\otimes \OO_{\PP^1}$ of rank $n_i$ and degree $d_i$, there is another compactification of $\mor_{r+1}(\dd)$.  The \emph{hyperquot scheme} $\hq_{\dd}$ parametrizes flat families of successive quotient sheaves of $V^*_{\PP^1}$ of rank $n-n_i$ and degree $d_i$, generalizing Grothendieck's Quot scheme.  (We consider quotients of $V^*$ rather than of $V$ for technical reasons.)  It is a smooth, projective variety of dimension $\dim\Fl(\nn)+\sum d_i(n_{i+1}-n_{i-1})$, and comes with a universal sequence of quotient sheaves on $\PP^1\times\hq_{\dd}$
\[
 V^*_{\PP^1\times\hq_{\dd}} = \Bb_{m+1} \twoheadrightarrow \cdots \twoheadrightarrow \Bb_2 \twoheadrightarrow \Bb_1,
\]
where $\Bb_i$ has rank $n-n_i$ and relative degree $d_i$.  
The space of maps $\mor(\dd)=\mor_3(\dd)$ embeds in $\hq_\dd$ as the largest (open) subscheme such that each restriction $\Bb_i|_{\PP^1\times\mor(\dd)}$ is locally free.

To avoid torsion, we prefer to work with the locally free sheaves $\Aa_i:= \ker(V^*_{\PP^1\times\hq_{\dd}} \rightarrow \Bb_i)$ which are locally free of rank $n_i$ and degree $-d_i$. There is a sequence
\[
  \Aa_1 \to \Aa_2 \to \cdots \to \Aa_{m+1} = V^*_{\PP^1\times\hq_{\dd}}
\]
The tradeoff here is that although the map  $\Aa_i \to \Aa_{i+1}$ is an inclusion of sheaves, it is not necessarily an inclusion of vector bundles (i.e., the cokernel is not locally free in general).


Dualizing again, and regarding the standard flag $E_\bullet$ as a flag of trivial vector bundles on $\PP^1\times\hq_{\dd}$, we have a sequence
\begin{equation}\label{e:sequence}
  E_1\hookrightarrow \cdots \hookrightarrow E_{n-1}\hookrightarrow E_n = V_{\PP^1\times\hq_{\dd}} = \Aa_{m+1}^* \rightarrow \Aa_{m}^*\rightarrow \cdots \rightarrow \Aa_1^*.
\end{equation}
Note that the maps $\Aa_{i+1}^* \to \Aa_i^*$ are not necessarily surjective.

Define $\D_w \subseteq \PP^1\times \hq_\dd$ as the degeneracy locus associated to this sequence:
\begin{equation}\label{e:dw}
  \D_w = \{  x \,|\, \rk_x(E_q \to \Aa_p^*) \leq r_w(p,q) \text{ for all } 1\leq q\leq n, n_p\in\nn\}.
\end{equation}
Fix a point $z\in\PP^1$, and let $\D_w(z) = \D_w \cap (\{z\}\times\hq_\dd)$ be the corresponding closed subscheme of $\hq_\dd$.  Define $\tilde{\D}_w$ and $\tilde{\D}_w(z)$ similarly, using the opposite flag $\tilde{E}_\bullet$ in place of $E_\bullet$.

For a point $z\in\PP^1$, there is an evaluation map $\ev_z:\mor(\dd) \to \Fl(\nn)$, defined by the sequence of quotient sheaves $\Aa_i^*$ restricted to $\mor(\dd)=\{z\}\times \mor(\dd)$.  We write $\Omega_w(z) = \ev_z^{-1}(\Omega_w)$ and $\tilde\Omega_w(z) = \ev_z^{-1}(\tilde\Omega_w)$.  From the definitions, one sees
\begin{equation}\label{e:omegad}
\Omega_w(z) = D_w(z)\cap \mor(\dd) \quad \text{ and } \quad \tilde\Omega_w(z) = \tilde{D}_w(z)\cap \mor(\dd).
\end{equation}

The group $GL_n=GL(V)$ acts on $\hq_\dd$ via its action on $V$: a quotient sheaf of $V_{\PP^1}$ is sent to the quotient obtained by precomposing with an automorphism of $V$.  The above constructions are all equivariant for appropriate subgroups; in particular, we set
\[
  \mu_w=[\D_{w}(z)]^T \quad \text{ and } \quad \tilde{\mu}_w=[\tilde{\D}_w(z)]^T
\]
in $H_T^*(\hq_\dd)$.  Since $T$ acts trivially on the $\PP^1$ factor of $\PP^1\times\hq_\dd$, these equivariant classes are independent of the choice of $z$.

Write $A_l$ for the restriction of $\Aa_l$ to $\{z\}\times\hq_\dd$.  (The choice of point $z\in\PP^1$ will usually be irrelevant.)  The degeneracy locus formula of Corollary \ref{c:fulton-partial} yields the following:

\begin{proposition}\label{p:deg-mu}
Setting $c_k(l) = c^T_k(A_l^*)$, we have $\mu_w = \Sch^\nn_w(c,t)$ in $H_T^*\hq_\dd$.
\end{proposition}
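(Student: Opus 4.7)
The plan is to apply the equivariant partial-flag degeneracy locus formula (Corollary~\ref{c:fulton-partial}) directly on $X = \hq_\dd$. Restricting the sequence~\eqref{e:sequence} to the slice $\{z\}\times\hq_\dd$ gives a sequence of equivariant maps
\[
E_1\hookrightarrow\cdots\hookrightarrow E_n \to A_m^* \to \cdots \to A_1^*,
\]
in which the $E_i$ are trivial bundles of rank $i$ equipped with the standard diagonal $T$-equivariant structure coming from $V=\CC^n$, and each $F_p := A_p^*$ is an equivariant vector bundle of rank $n_p$ on $\hq_\dd$. Since $\hq_\dd$ is smooth (hence Cohen--Macaulay), Corollary~\ref{c:fulton-partial} applies as soon as we know that the degeneracy locus $\D_w(z)$, which is cut out by precisely the rank conditions of that corollary, has the expected codimension $\ell(w)$ in $\hq_\dd$.

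To verify this codimension, the natural first step is to restrict to the open dense subscheme $\mor(\dd) \subset \hq_\dd$ where the sheaves $\Bb_i$ are locally free. By~\eqref{e:omegad}, $\D_w(z) \cap \mor(\dd) = \ev_z^{-1}(\Omega_w)$. The evaluation morphism $\ev_z : \mor(\dd) \to \Fl(\nn)$ is flat and surjective, since $\Fl(\nn)$ is a homogeneous space for $GL_n$ and $\ev_z$ is $GL_n$-equivariant; therefore $\ev_z^{-1}(\Omega_w)$ has codimension $\ell(w)$ in $\mor(\dd)$, hence of the expected codimension in $\hq_\dd$.

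Extending this across the boundary $\hq_\dd \setminus \mor(\dd)$ is where the real content lies. The boundary stratifies by the torsion profiles of the universal quotients $\Bb_i$: on each stratum, the ``movable'' part of the quotient has strictly smaller degree $\dd' < \dd$, so the stratum fibers over a hyperquot scheme $\hq_{\dd'}$ with fibers parametrizing torsion data, and its codimension in $\hq_\dd$ can be estimated in terms of $|\dd|-|\dd'|$. An induction on $|\dd|$, together with the standard boundary codimension estimates for hyperquot schemes (cf.~\cite{cf-partial}), ensures that $\D_w(z)$ meets each boundary stratum in codimension at least $\ell(w)$. The boundary analysis is the main obstacle; once it is established, Corollary~\ref{c:fulton-partial} yields $[\D_w(z)]^T = \Sch^\nn_w(c,t)$ under the specialization $c_k(l) = c^T_k(A_l^*)$, and since $T$ acts trivially on the $\PP^1$-factor, the class is independent of $z$ and equals $\mu_w$, as required.
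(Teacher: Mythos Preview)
Your approach is the same as the paper's: the paper simply records Proposition~\ref{p:deg-mu} as an immediate consequence of Corollary~\ref{c:fulton-partial}, applied to the sequence~\eqref{e:sequence} restricted to $\{z\}\times\hq_\dd$. You are right, however, that Corollary~\ref{c:fulton-partial} inherits the codimension hypothesis from Theorem~\ref{t:fulton}, so one must know that $\D_w(z)$ has pure codimension $\ell(w)$ in $\hq_\dd$. The paper leaves this implicit, relying on the fact---established in the quot-scheme literature it cites (see especially \cite{cf-partial,cf-flags})---that these degeneracy loci always have the expected codimension.

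Your sketch of the codimension verification is correct and is exactly how it is done in those references: on the open locus $\mor(\dd)$ one has $\D_w(z)\cap\mor(\dd)=\ev_z^{-1}(\Omega_w)$, and $\ev_z$ is flat (by $GL_n$-equivariance over the homogeneous target), giving codimension $\ell(w)$ there; across the boundary one uses the stratification of Theorem~\ref{t:strat} and induction on $\dd$ together with the dimension estimates of \cite[\S5--6]{cf-partial}. So your proposal is essentially the paper's one-line proof, with the codimension hypothesis unpacked rather than cited.
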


We also have interpretations of the classes $Q_i[j]$.  In this context, Lemma~\ref{l:chern-quotient} says
\begin{equation*}
  Q_{n_{l+1}-i+1}[i-1] = c^T_i(\ker(\Aa^*_{l+1}\rightarrow \Aa^*_l)),
\end{equation*}
for $n_{l+1}-n_l<i\leq n_{l+1}-n_{l-1}$, so in particular,
\begin{equation}\label{e:q-vars}
  Q_{n_{l-1}+1}[n_{l+1}-n_{l-1}-1] = c^T_{n_{l+1}-n_{l-1}}(\ker(\Aa^*_{l+1}\rightarrow \Aa^*_l)).
\end{equation}
In the case of the complete flag variety, $Q_l[1]$ is the equivariant class of the locus where $\Aa^*_{l+1} \to \Aa^*_{l}$ fails to be surjective.

Intersection theory on hyperquot schemes was used to obtain a quantum Schubert calculus on Grassmannians and flag varieties \cite{bertram,cf-flags,cf-partial,chen}.  These articles rely on the fact that Gromov-Witten invariants can be computed as intersection numbers on quot schemes.  The fundamental fact we use in the proof of Theorem~\ref{t:main} is an equivariant version of that statement:
\begin{proposition}
\label{p:product-quot}
The equivariant quantum product can be computed on (hyper)quot schemes.  That is, 
given permutations $v_1,\ldots,v_r$ in $\Sn$, the EQLR coefficient $c_{v_1,\ldots,v_r}^{w,\ul{d}}(t)$ is equal to 
\[
  \qpi^T_*( \mu_{v_1}\cdots\mu_{v_r}\cdot\tilde\mu_{w^\vee} ),
\]
where $\qpi^T_*\colon H_T^*(\hq_{\ul{d}}) \to H_T^*(\pt)$ is the equivariant pushforward to a point. 
\end{proposition}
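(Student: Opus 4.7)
The plan is to reduce the integral on $\Mbar_{r+1}(\dd)$ defining the EQLR coefficient to an integral on the hyperquot scheme $\hq_\dd$, using the equivariant moving lemma of Section~\ref{s:moving} to control boundary contributions. First, the factor $f^*[\pt]$ appearing in \eqref{e:eqlr} pulls the computation back to a generic fiber of the forgetful map $f$. For $r+1\geq 3$, such a fiber is identified, via the $PGL_2$-symmetry of $\PP^1$ that fixes three marked points, with $\mor(\dd)$ equipped with $r+1$ distinct marked points $z_1,\ldots,z_{r+1}\in\PP^1$; under this identification each $\ev_i^*\scl_{v_i}$ becomes $\ev_{z_i}^*\scl_{v_i}$ and $\ev_{r+1}^*\tilde\scl_{w^\vee}$ becomes $\ev_{z_{r+1}}^*\tilde\scl_{w^\vee}$. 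The standard dimension count for the boundary $\Mbar_{r+1}(\dd)\setminus\mor_{r+1}(\dd)$, whose points have reducible source curves, then shows that only the open locus $\mor_{r+1}(\dd)$ contributes.

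Next, I embed $\mor(\dd)\hookrightarrow\hq_\dd$ as the locus where all universal quotient sheaves are locally free. By \eqref{e:omegad}, the restrictions $\D_w(z)\cap\mor(\dd)$ and $\tilde\D_w(z)\cap\mor(\dd)$ coincide with $\ev_z^{-1}(\Omega_w)$ and $\ev_z^{-1}(\tilde\Omega_w)$, respectively, so $\mu_w$ and $\tilde\mu_{w^\vee}$ restrict on $\mor(\dd)$ to $\ev_z^*\scl_w$ and $\ev_z^*\tilde\scl_{w^\vee}$. Hence the desired pushforward on $\hq_\dd$ agrees with the corresponding pushforward on $\mor(\dd)$ as soon as the product $\mu_{v_1}\cdots\mu_{v_r}\cdot\tilde\mu_{w^\vee}$ is known to be supported in the locally-free locus.

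This is exactly what the equivariant moving lemma of Section~\ref{s:moving} supplies: after translating the degeneracy loci by a generic element of the mixing group acting on $\hq_\dd$, the translated loci $\D_{v_1}(z_1),\ldots,\D_{v_r}(z_r),\tilde\D_{w^\vee}(z_{r+1})$ meet properly inside $\mor(\dd)$, of the expected dimension, with no contribution from the boundary $\hq_\dd\setminus\mor(\dd)$. Since $T$ acts trivially on the $\PP^1$ factor of $\PP^1\times\hq_\dd$, the classes $\mu_w=[\D_w(z)]^T$ and $\tilde\mu_{w^\vee}=[\tilde\D_{w^\vee}(z)]^T$ are independent of the chosen point $z$, so the two pushforwards match on the nose.

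The main obstacle is the equivariant moving lemma itself. Classical proofs of the analogous non-equivariant statement (cf.~\cite{chen,cf-partial}) invoke Kleiman-style general-position arguments to move the Schubert cycles apart, and these are not directly available in equivariant cohomology; replacing them by the mixing-group technique of \cite{anderson} is the key technical ingredient. Once that lemma is in hand, combining it with the two identifications above yields the claimed equality.
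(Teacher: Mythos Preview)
Your overall strategy---show that both the Kontsevich-space and quot-scheme integrals reduce to the same integral over the common open locus $\mor(\dd)$---is precisely the paper's strategy, and your identification via \eqref{e:omegad} is correct. However, two steps are not executed carefully enough to be a proof.

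First, on the Kontsevich side you write that ``the standard dimension count for the boundary $\Mbar_{r+1}(\dd)\setminus\mor_{r+1}(\dd)$ \ldots shows that only the open locus contributes.'' In ordinary cohomology this is indeed a dimension argument, but in equivariant cohomology a vanishing dimension count does not by itself kill a pushforward: a priori the equivariant class could be supported on a lower-dimensional locus and still push forward to a nonzero polynomial in $t$. You need Moving Lemma~A (Lemma~\ref{l:move-mbar}) here just as much as you need Moving Lemma~B on the quot side; the paper proves both and uses both symmetrically.

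Second, you say the mixing group acts on $\hq_\dd$ and that translating by a generic element makes the product $\mu_{v_1}\cdots\mu_{v_r}\cdot\tilde\mu_{w^\vee}$ ``supported in the locally-free locus.'' The mixing group $\Gamma$ does not act on $\hq_\dd$; it acts on the approximation (mixing) space $\Qq_\dd = \EE\times^T\hq_\dd$. The paper's mechanism is to expand $c_{v_1,\ldots,v_r}^{w,\dd}(t)=\sum_J c_J\,(-\alpha)^J$ via Lemma~\ref{l:coefficient}, so that each $c_J$ becomes an \emph{ordinary} pushforward on $\Qq_\dd$ (resp.\ $\Mmbar_{r+1}(\dd)$), restricted to the slice $(\Qq_\dd)_J$ (resp.\ $(\Mmbar_{r+1}(\dd))_J$). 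The two moving lemmas then show that each such $c_J$ equals $\#W$ for the \emph{same} finite set $W\subset(\Mm(\dd))_J$ (Corollaries~\ref{c:eqlr-enum} and \ref{c:product-quot}). This coefficient-by-coefficient reduction to ordinary intersection theory on the mixing space is what makes the transversality argument rigorous in the equivariant setting; your sketch skips it.
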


To prove this, we will check equality of the coefficient of each monomial $(-\alpha_1)^{j_1}\cdots (-\alpha_{n-1})^{j_{n-1}}\cdot (-t_n)^{j_n}$, using Lemma~\ref{l:coefficient}.  In fact, we will see that these coefficients count points in certain intersections taking place inside the mixing space $\Mm(\dd)$, so they are nonnegative integers; see Corollary~\ref{c:eqlr-enum}.

We also write
\[
  ( \mu_{v_1}\cdots\mu_{v_r}\cdot\tilde\mu_{w^\vee} )_\dd^T = \qpi^T_*( \mu_{v_1}\cdots\mu_{v_r}\cdot\tilde\mu_{w^\vee} )
\]
for the EQLR coefficient $c_{v_1,\ldots,v_r}^{w,\ul{d}}(t)$. With this notation, we have
\[  \scl_{v_1} \qtp \scl_{v_2} \qtp \cdots \qtp \scl_{v_r} = \sum_{w,\ul{d}} \qq^{\dd} \, ( \mu_{v_1}\cdots\mu_{v_r}\cdot\tilde\mu_{w^\vee} )_\dd^T  \, \scl_w.
\]
By linearity of equivariant quantum cohomology of flag varieties and of equivariant cohomology of quot schemes, for any polynomial $F$ in variables indexed by $\Sn$, we obtain
\begin{corollary}
\label{c:poly-quot}
\[
F(\sigma,t) =  \sum_{w,\ul{d}} \qq^{\dd} \, (F(\mu,t) \cdot\tilde\mu_{w^\vee} )_\dd^T  \, \scl_w \text{ in } QH_T^*(Fl(\nn)).
\]
\end{corollary}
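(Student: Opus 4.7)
The plan is to deduce this corollary directly from Proposition~\ref{p:product-quot} by $\Lambda_T[\qq]$-linearity; essentially all of the content lies in that proposition, and the corollary merely packages it in polynomial form.

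First I would check the identity when $F = X_{v_1}\cdots X_{v_r}$ is a single monomial in commuting variables indexed by $\Sn$. In this case $F(\sigma,t)$ is by definition the equivariant quantum product $\scl_{v_1}\qtp\cdots\qtp\scl_{v_r}$, whose Schubert expansion $\sum_{w,\dd}\qq^{\dd}\,c_{v_1,\ldots,v_r}^{w,\dd}(t)\,\scl_w$ is given by the multi-product formula preceding~\eqref{e:eqlr}. Proposition~\ref{p:product-quot} rewrites each EQLR coefficient as the quot-scheme intersection number $(\mu_{v_1}\cdots\mu_{v_r}\cdot\tilde\mu_{w^\vee})_\dd^T$. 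Since $F(\mu,t)$ is simply the cup product $\mu_{v_1}\cdots\mu_{v_r}$ in $H_T^*(\hq_\dd)$, the monomial case of the corollary follows at once.

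For a general polynomial $F = \sum_i a_i(t)\,X_{v_1^{(i)}}\cdots X_{v_{r_i}^{(i)}}$ with coefficients $a_i(t)\in\Lambda_T$, I would apply the monomial identity termwise, interchange the finite outer sum with the sum over $(w,\dd)$, and then collapse the inner sum for each fixed $\dd$ using $\Lambda_T$-linearity of both the cup product in $H_T^*(\hq_\dd)$ and the equivariant pushforward $\qpi^T_*\colon H_T^*(\hq_\dd)\to\Lambda_T$. This produces $(F(\mu,t)\cdot\tilde\mu_{w^\vee})_\dd^T$ as the coefficient of $\qq^\dd\,\scl_w$, yielding the stated identity.

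I do not expect any substantive obstacle; the argument is formal once Proposition~\ref{p:product-quot} is available. The sole bookkeeping point is that the notation $F(\mu,t)$ tacitly depends on $\dd$, since the substitution $X_v\mapsto\mu_v$ is carried out in $H_T^*(\hq_\dd)$ — a ring that changes with $\dd$ — and this dependence must be respected when swapping the two sums.
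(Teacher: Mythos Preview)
Your proposal is correct and matches the paper's approach exactly: the paper states the corollary immediately after Proposition~\ref{p:product-quot}, justifying it simply by ``linearity of equivariant quantum cohomology of flag varieties and of equivariant cohomology of quot schemes.'' Your write-up is in fact more careful than the paper's one-line justification, and your observation that $F(\mu,t)$ depends tacitly on $\dd$ is a useful clarification.
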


Unlike the Kontsevich compactification, there is no globally defined evaluation map from $\hq_\dd$ to $\Fl(\nn)$.  However, the boundary $\hq_\dd\setminus\mor(\dd)$ can be broken into pieces which do map to (different) partial flag varieties.  This is described in detail in \cite{bertram}, \cite{cf-flags}, and \cite{cf-partial}; we summarize the relevant facts here.

Fix $\nn$.  Given $\dd$, let $\ee = (e_1,\ldots,e_m)$ be such that
\begin{equation}\label{e-conds}
\begin{array}{rclcl}
  e_i &\leq& \min(n_i,d_i) &\quad \text{ for } &1\leq i\leq m, \text{ and }\\
  e_i-e_{i-1} &\leq& n_i-n_{i-1} &\quad \text{ for } &2\leq i\leq m.
\end{array}
\end{equation}
In addition to $\hq_\dd$ and $\Fl(\nn)$, we will also consider quot schemes $\hq_{\dd-\ee}$ (parametrizing quotients whose ranks are still $\nn$, but whose degrees are $\dd-\ee$) and partial flag varieties $\Fl(\nn')$ (where $\nn' = \{ n_1-e_1 \leq n_2-e_2\leq \cdots \leq n_m-e_m \}$).

\begin{theorem}[{\cite{cf-flags,cf-partial}}]\label{t:strat}
Assume some $e_i>0$.  There are smooth irreducible varieties $\uu_\ee$ with the following properties.

\begin{enumerate}
\item There is a morphism $h_\ee: \uu_\ee \to \hq_\dd$, which is birational onto its image.  Every point of $\hq_\dd\setminus\mor(\dd)$ lies in the image of $h_\ee$, for some $\ee$.

\medskip

\item There is a smooth morphism $\rho: \uu_\ee \to \PP^1 \times \hq_{\dd-\ee}$, whose image contains $\PP^1\times \mor(\dd-\ee)$.

\medskip

\item Fix a point $z\in \PP^1$, and write $\uu_\ee(z) = \rho^{-1}(\{z\}\times\hq_{\dd-\ee})$.  There is a natural morphism
\[
  \psi_\ee(z): \uu_\ee(z) \to \Fl(\nn'),
\]
and for each $w \in \Sn$, there is a $w' \in S^{\nn'}$ such that
\[
  h_\ee^{-1}(\D_w(z)) = \rho^{-1}(\PP^1 \times \D_w^{\ee}(z)) \cup \psi_\ee(z)^{-1}(\Omega_{w'}),
\]
where the superscript in $\D_w^{\ee}$ indicates the degeneracy locus inside $\hq_{\dd-\ee}$.  
The same holds with $\D_w(z)$ and $\Omega_{w'}$ replaced by $\tilde\D_{w}(z)$ and $\tilde\Omega_{w'}$, respectively. \label{t:strat3}
\end{enumerate}
Moreover, the morphisms $h_\ee$, $\rho$, and $\psi_\ee(z)$ are equivariant for  natural actions of $GL_n=GL(V)$.
\end{theorem}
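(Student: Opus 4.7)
The plan is to construct $\uu_\ee$ as a parameter space for hyperquot data together with auxiliary data recording the ``torsion jumps'' of size $\ee$ at a point. Concretely, I would parametrize tuples consisting of a point $z\in\PP^1$, a hyperquot $V^*_{\PP^1}\twoheadrightarrow \Bb'_{m+1}\twoheadrightarrow\cdots\twoheadrightarrow\Bb'_1$ of type $(\nn,\dd-\ee)$, and, for each $i$, a quotient $\Bb'_i|_z \twoheadrightarrow W_i$ of the fiber at $z$ of dimension $n-n_i+e_i$ (with $e_i$ chosen so that the $W_i$ fit into a compatible flag). Reinserting these fiber quotients as ``extra torsion'' at $z$ via elementary modifications produces a hyperquot of type $(\nn,\dd)$, which defines the morphism $h_\ee$, while the forgetful map $(z,\Bb'_\bullet,\{W_i\})\mapsto(z,\Bb'_\bullet)$ gives $\rho$, and the assignment $\{W_i\}\mapsto (W_m\subset\cdots\subset W_1)$ gives $\psi_\ee(z)$.

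The first block of checks is geometric. Smoothness of $\uu_\ee$ should follow by exhibiting it as a tower of Grassmann-bundles over $\PP^1\times\hq_{\dd-\ee}$, with the bundle structures governed by the successive flag-quotient choices $W_i\to W_{i-1}$; along the way this realizes $\rho$ as a smooth morphism whose image automatically contains $\PP^1\times\mor(\dd-\ee)$, since over the locally free locus every fiber quotient of the prescribed dimensions is available. To see that $h_\ee$ is birational onto its image and that the images cover all of $\hq_\dd\setminus\mor(\dd)$ as $\ee$ varies, I would invoke the structure theorem for torsion sheaves on $\PP^1$: any hyperquot $\Bb_\bullet$ of type $(\nn,\dd)$ acquires, at each non-locally-free point $z$, a canonical torsion invariant $\ee=\ee(z)$, and the torsion-free quotient of $\Bb_\bullet$ recovers an element of $\hq_{\dd-\ee}$. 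This canonical recovery gives the rational inverse of $h_\ee$ on the open locus where the torsion is concentrated at a single point $z$ with jump vector exactly $\ee$.

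The heart of the statement is part (\ref{t:strat3}), describing $h_\ee^{-1}(\D_w(z))$. The key observation is that the composition $E_q\to\Aa_p^*$ on $\hq_\dd$ factors, under $h_\ee$, through the corresponding map on $\hq_{\dd-\ee}$, corrected by the fiber datum $W_\bullet$ at $z$: away from $z$ the ranks are governed by $\rk(E_q\to (A')_p^*)$, while at $z$ they are governed by $\rk(E_q\to W_p^\vee)$ in the flag variety $\Fl(\nn')$. This produces the union decomposition with $w'$ obtained from $w$ by the standard ``reduction of rank conditions'' dictated by the jump vector $\ee$; the permutation $w'$ is essentially characterized by $r_{w'}(n_p-e_p,q)=r_w(n_p,q)-e_p$ (when this makes sense), and one checks that for $w\in\Sn$ the resulting reduction lies in $S^{\nn'}$. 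The opposite flag version is identical with $E_\bullet$ replaced by $\tilde E_\bullet$.

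The main obstacle I anticipate is precisely this degeneracy-locus analysis: keeping track of how the rank conditions defining $\D_w$ split between the ``bulk'' conditions that descend to $\hq_{\dd-\ee}$ and the ``pointwise'' conditions imposed by the fiber flag at $z$, and in particular ensuring that no extraneous components arise in $h_\ee^{-1}(\D_w(z))$. Finally, equivariance under $GL_n=GL(V)$ comes for free once the construction is expressed entirely in terms of sheaves and flags on which $GL(V)$ acts by its given action on $V$: $h_\ee$, $\rho$, and $\psi_\ee(z)$ are all defined by canonical sheaf-theoretic operations (quotients, kernels, and pullbacks), each of which is manifestly compatible with the $GL(V)$-action on $\hq_\dd$, $\hq_{\dd-\ee}$, and $\Fl(\nn')$.
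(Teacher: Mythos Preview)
The paper does not prove this theorem; it is quoted from Ciocan-Fontanine's papers \cite{cf-flags,cf-partial} and used as a black box in the proof of the moving lemma. So there is no ``paper's own proof'' to compare against. That said, your sketch is essentially the construction carried out in those references: $\uu_\ee$ is built as a relative flag/Grassmann bundle over $\PP^1\times\hq_{\dd-\ee}$ recording elementary-modification data at a point, $h_\ee$ is the elementary-modification map, $\rho$ is the forgetful projection, and $\psi_\ee(z)$ reads off the fiber flag. The overall architecture you describe matches \cite[\S5]{cf-partial}.

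One genuine slip: you describe $W_i$ as a quotient of $\Bb'_i|_z$ of dimension $n-n_i+e_i$, but $\Bb'_i$ has rank $n-n_i$, so no such quotient exists when $e_i>0$. The correct data are quotients of the fiber of the \emph{subsheaf} $\Aa'_i|_z$ (of rank $n_i$) of dimension $e_i$---equivalently, subspaces of dimension $n_i-e_i$; these are what assemble into a point of $\Fl(\nn')$. This is a bookkeeping error rather than a conceptual one, but it propagates into your formula for $r_{w'}$, which is not quite the relation used in \cite{cf-partial}. If you intend to actually reprove the theorem, the delicate step is exactly the one you flag: the decomposition of $h_\ee^{-1}(\D_w(z))$ into the two pieces, with the correct $w'$ and the dimension estimates on $\ell(w')$ (these are \cite[Lemma~5.5, Lemma~5.8]{cf-partial}), and verifying that $h_\ee$ is birational onto its image requires more care than ``canonical recovery of the torsion-free quotient,'' since several $\ee$ can contribute at the same boundary point.
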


\section{The mixing group}\label{s:mixing}

Recall that the $T$-equivariant cohomology of $\Fl(\nn)$ is computed as the ordinary cohomology of a flag bundle $\FFl=\FFl(\nn;E) \to \BB$, where $E=L_1\oplus\cdots\oplus L_n$.  There is a large group acting on $\FFl$, using the transitive automorphism group of $\BB$ together with a ``fiberwise'' action of lower-triangular matrices.  This group was introduced in \cite{anderson} and dubbed the ``mixing group'' in \cite{agm}; in this section, we describe its construction concretely for the present context. 

We identify $T$ with $(\CC^*)^n$ using the basis $\alpha_1,\, \ldots,\, \alpha_{n-1},\, t_n$ for $M$, where $\alpha_i = t_i-t_{i+1}$.  Using the approximation space $\BB = (\PP^{N-1})^n$ (for $N\gg 0$, as in \S\ref{ss:eqcoh}), set $M_i = pr_i^*\OO(-1)$, so $c_1(M_i) = \alpha_i$ for $1\leq i\leq n-1$, and $c_1(M_n) = t_n$.  With this setup, we have $L_i = M_i \otimes M_{i+1} \otimes \cdots \otimes M_n$, so $c_1(L_i) = t_i$.  

Observe that for $i\geq j$, the line bundle
\[
  L_i \otimes L_j^{-1} = M_j^{-1} \otimes \cdots \otimes M_{i-1}^{-1}
\]
is generated by global sections.  It follows that the bundle
\[
  End(E) = \bigoplus_{i,j} L_i \otimes L_j^{-1}
\]
has global sections in lower-triangular matrices.

Let $Aut(E) \subset End(E)$ be the automorphism bundle of $E$; this is a group scheme over $\BB$, whose fiber at $x$ is $GL(E(x))$.  The group $\Gamma_0 = \Hom_\BB(\BB,Aut(E))$ of global sections is a connected algebraic group over $\CC$.  This group acts on the total space of the bundle $E$, and hence also on the flag bundle $\FFl$, preserving the fibers of the projection to $\BB$.  The group acts similarly on $\Qq_\dd$, the mixing space for the quot scheme $\hq_\dd$.

The group $(PGL_N)^n$ acts naturally on $\Gamma_0$ via its transitive action on $\BB$.  Let $\Gamma = \Gamma_0\rtimes (PGL_N)^n$ be the semidirect product; this is a connected linear algebraic group acting on $\FFl$ and $\Qq_\dd$, which we call the \emph{mixing group}.

Let $\tilde{E}_\bullet$ be the flag of bundles on $\BB$ with $\tilde{E}_i = L_n\oplus \cdots \oplus L_{n+1-i}$, and let $\tilde{\OOmega}_w \subseteq \FFl$ be the corresponding Schubert loci.  Then $\tilde{\OOmega}_w \isom \EE \times^T \tilde\Omega_w$.

\begin{lemma}[{\cite{anderson}, \cite[\S6]{agm}}] \label{l:gp-orbits}
Let $X$ be a scheme with an action of $\tilde{B}$, and let $X_i$ be the $\tilde{B}$-orbits.  Let $\Xx = \EE \times^T X$ and $\Xx_i = \EE \times^T X_i$ be the mixing spaces.  Then the orbits of $\Gamma$ on $\Xx$ are $\Xx_i$.

In particular, the orbits of $\Gamma$ on $\FFl$ are the cells $\tilde\OOmega^\circ_w \isom \EE \times^T \tilde\Omega^\circ_w$ (so $\Gamma$ acts with finitely many orbits).
\end{lemma}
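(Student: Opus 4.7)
The plan is to decompose the statement into (i) each $\Xx_i = \EE \times^T X_i$ is $\Gamma$-stable, and (ii) $\Gamma$ acts transitively on each $\Xx_i$. Both parts reduce to one key technical fact, which I will establish first: the evaluation map $\ev_b \colon \Gamma_0 \to \Aut(E(b)) \cong GL_n$ at any point $b \in \BB$ has image precisely the lower triangular Borel $\tilde B$.

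To prove this key fact, I decompose $\End(E) = \bigoplus_{i,j} L_i \otimes L_j^{-1}$. For $i < j$ one has $L_i \otimes L_j^{-1} = M_i \otimes \cdots \otimes M_{j-1}$, a tensor of pullbacks of $\OO(-1)$ from factors of $(\PP^{N-1})^n = \BB$, hence it admits no global sections; for $i > j$ the bundle is globally generated, as noted in the preamble; and the diagonal summands are trivial. Thus the image of $H^0(\BB, \End(E)) \to \End(E(b))$ is exactly the lower triangular subalgebra. The main obstacle is upgrading this to surjectivity $\Gamma_0 = H^0(\BB, \Aut(E)) \twoheadrightarrow \tilde B$, because a general section of $\End(E)$ lifting an element of $\tilde B$ at $b$ may fail to be invertible elsewhere. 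I will handle this via a Levi-style factorization: given $\tilde b = d \cdot u \in \tilde B$ with $d$ diagonal and $u$ lower unipotent, realize $d$ as the constant diagonal automorphism of $E$, and realize $u$ as $\exp(X_0)$, where $X_0 \in H^0(\BB, \End(E))$ is a strictly lower triangular section lifting $\log(u)$ at $b$. Since $X_0$ is fiberwise nilpotent, $\exp(X_0)$ is globally an automorphism of $E$ with $\exp(X_0)(b) = u$, so $d \cdot \exp(X_0) \in \Gamma_0$ evaluates to $\tilde b$ at $b$.

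With this in hand, (i) is quick: $\Gamma_0$ acts fiberwise on $\Xx \to \BB$ through its image $\tilde B$ in $\Aut(E(b))$, which preserves each $X_i$ by hypothesis; and $(PGL_N)^n$ acts on $\Xx$ through lifts to $GL_N^n$ on $\EE$, with the scalar ambiguity absorbed into the $T$-action on $X$, which preserves $X_i$ since $T \subset \tilde B$. For (ii), given $x, y \in \Xx_i$, first apply an element of $(PGL_N)^n$ to move $x$ into the fiber of $\Xx \to \BB$ containing $y$, using transitivity of $(PGL_N)^n$ on $\BB$. Identifying that common fiber with $X$ via a choice of representative in $\EE$, the images of $x$ and $y$ lie in the single $\tilde B$-orbit $X_i$, so the surjection $\Gamma_0 \twoheadrightarrow \tilde B$ at $b$ produces an element carrying one to the other.

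Finally, the ``in particular'' statement follows immediately by taking $X = \Fl(\nn)$: the $\tilde B$-orbits there are the opposite Schubert cells $\tilde\Omega^\circ_w$ for $w \in \Sn$ by the Bruhat decomposition relative to the opposite Borel, so the $\Gamma$-orbits on $\FFl = \EE \times^T \Fl(\nn)$ are precisely $\tilde\OOmega^\circ_w = \EE \times^T \tilde\Omega^\circ_w$, and this is a finite list.
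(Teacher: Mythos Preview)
The paper does not supply its own proof of this lemma; it is stated with attribution to \cite{anderson} and \cite[\S6]{agm}. Your argument is correct and furnishes a self-contained proof along the lines of those references. The crux is exactly what you isolate: surjectivity of the evaluation $\Gamma_0 \to \tilde{B}$ at every point of $\BB$, which you handle cleanly via the Levi factorization $\tilde b = d\cdot u$ and the observation that $\exp$ of a globally defined strictly-lower-triangular (hence fiberwise nilpotent) section lands in $\Gamma_0$. This is the standard maneuver and avoids the genuine obstacle that an arbitrary lower-triangular lift of $\tilde b$ in $H^0(\BB,\End(E))$ need not be invertible everywhere.

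One small point of exposition: your treatment of the $(PGL_N)^n$ factor is slightly informal. It is cleanest to note that $(GL_N)^n$ acts on $\EE$ (hence on $\Xx$, via the first factor only) and that its center acts on $\Xx$ through the copy of $T$ sitting inside $\Gamma_0$ as constant diagonal sections; this is precisely why the action of $\Gamma_0 \rtimes (GL_N)^n$ descends to $\Gamma = \Gamma_0 \rtimes (PGL_N)^n$. With that said, the $(GL_N)^n$ action on $\Xx$ manifestly preserves each $\Xx_i$ (it moves only the $\EE$ coordinate), so step (i) is immediate without invoking $T\subset\tilde B$ at all. Your transitivity step (ii) is fine as written.
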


\section{Equivariant moving lemmas}\label{s:moving}

In this section we use the action of the mixing group to put certain subvarieties of the space of maps into (generically) transverse position.  Our goal is to prove Proposition~\ref{p:product-quot}, establishing that the EQLR coefficients can be computed on quot schemes.  Along the way, we will obtain an ``enumerative'' interpretation of these coefficients.

We will need a generalization of Kleiman's transversality theorem \cite[Theorem~8]{kleiman}.  As a matter of notation, for an algebraic group $G$ acting on a scheme $X$, a map $\psi\colon Z \to X$, and an element $g\in G$, we write $g\cdot Z$ to denote $Z$ equipped with the composite map $Z \xrightarrow{\psi} X \xrightarrow{g\cdot} X$.  (When $\psi$ is an embedding, this is simply the translation of $Z$ by $g$.)

A map $\psi\colon Z \to X$ is \define{dimensionally transverse} to a (locally closed) subvariety $V \subseteq X$ if $\codim_Z(\psi^{-1}V)=\codim_X(V)$.

\begin{proposition}\label{p:transverse}
Let $X$ be a smooth variety with an action of an algebraic group $G$, and let $Y$ and $Z$ be Cohen-Macaulay varieties with maps $\phi\colon Y \to X$ and $\psi\colon Z \to X$.  Assume that $\psi$ is dimensionally transverse to the orbits of $G$ on $X$, that is,
\[
  \codim_Z(\psi^{-1}O) = \codim_X(O)
\]
for all orbits $O$.  Then for general $g$ in $G$, the scheme $W_g = Y\times_X (g\cdot Z) $ is pure-dimensional, of dimension
\[
  \dim W_g = \dim Y + \dim Z - \dim X.
\]
(When $\psi$ is an embedding of $Z$ in $X$, this says $\codim_Y(\phi^{-1}(g\cdot Z)) = \codim_X(Z)$.)  If the ground field has characteristic zero, and $Y$ and $Z$ are reduced, then for general $g$, $W_g$ is also reduced.
\end{proposition}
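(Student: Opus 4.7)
The plan is to analyze the total incidence variety
\[
 \tilde W \;=\; \bigl\{(y,g,z)\in Y\times G\times Z \;:\; \phi(y)=g\cdot\psi(z)\bigr\},
\]
equipped with its two projections: $p\colon \tilde W\to G$, whose fiber over $g$ is exactly $W_g$, and $q\colon \tilde W\to Y$.  I will bound $\dim \tilde W$ from above by studying the fibers of $q$ through the orbit-transversality hypothesis, pass this bound to a generic fiber of $p$, and match it with a Cohen--Macaulay lower bound for each component of $W_g$.  Cohen--Macaulayness of $W_g$ will fall out automatically from the equality of bounds, and reducedness in characteristic zero will be handled as a separate Bertini-style step.

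The first substantive step is to control the fibers of the auxiliary \emph{orbit map} $\Phi\colon G\times Z\to X$, $(g,z)\mapsto g\cdot\psi(z)$, since the $q$-fiber over $y$ is precisely $\Phi^{-1}(\phi(y))$.  Fix $x\in X$ lying in an orbit $O=G\cdot x$.  Projecting $\Phi^{-1}(x)$ to $Z$ identifies the image with $Z_O:=\psi^{-1}(O)$, and the fiber over each $z\in Z_O$ is the coset $\{g:g\cdot\psi(z)=x\}$ of the stabilizer $G_{\psi(z)}$, of dimension $\dim G-\dim O$.  The hypothesis $\codim_Z Z_O=\codim_X O$ gives $\dim Z_O=\dim Z-\dim X+\dim O$, hence
\[
 \dim \Phi^{-1}(x) \;=\; \dim G+\dim Z-\dim X,
\]
\emph{independently} of the orbit $O$.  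Consequently each irreducible component of $\tilde W$ has dimension at most $\dim Y+\dim G+\dim Z-\dim X$.

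Now project to $G$.  Components of $\tilde W$ with non-dominant image in $G$ contribute empty fibers over a dense open of $G$; the dominant components have generic fiber of dimension at most $\dim\tilde W-\dim G\leq\dim Y+\dim Z-\dim X$.  By Chevalley's upper semicontinuity of fiber dimension there is a dense open $U\subseteq G$ with $\dim W_g\leq\dim Y+\dim Z-\dim X$ for $g\in U$.  Conversely, since $Y$ and $Z$ are Cohen--Macaulay and $X$ is smooth, the diagonal $\Delta\subset X\times X$ is regularly embedded of codimension $\dim X$, so realizing $W_g=(Y\times gZ)\times_{X\times X}\Delta$ and applying the Krull height theorem locally shows every irreducible component of $W_g$ has dimension at least $\dim Y+\dim Z-\dim X$.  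The two bounds match, giving pure-dimensionality of $W_g$; because the intersection is of the expected codimension in a smooth ambient and cut out by a regular sequence locally, $W_g$ is moreover Cohen--Macaulay.

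The main obstacle is the reducedness statement in characteristic zero.  Because $W_g$ is already Cohen--Macaulay of pure expected dimension, it has no embedded primes, so it suffices to prove that $W_g$ is generically reduced along each of its components.  Here I would stratify by orbits: for each orbit $O$ meeting both $\phi(Y)$ and $\psi(Z)$, the loci $Y_O=\phi^{-1}(O)$ and $Z_O=\psi^{-1}(O)$ are reduced locally closed subschemes of $Y$ and $Z$ mapping to the homogeneous space $O$, and the classical Kleiman--Bertini theorem applied to these maps on $O$ yields a dense open $U_O\subseteq G$ over whose points the translate intersection is generically reduced along the corresponding stratum of $W_g$.  In the applications of interest the action has only finitely many orbits, so the intersection $U\cap\bigcap_O U_O$ is still a dense open on which generic reducedness at every stratum holds; combined with the Cohen--Macaulay property already established, this upgrades to full reducedness of $W_g$.
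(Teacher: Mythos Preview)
Your approach matches the paper's, which gives only a one-sentence proof: it cites Kleiman and Speiser and remarks that ``the essential point is that the action map $G\times Z\to X$ is flat.''  Your fiber computation for $\Phi^{-1}(x)$ is exactly this flatness check---equidimensional fibers from a Cohen--Macaulay source to a regular target force flatness by miracle flatness---so you have unpacked precisely the point the paper highlights.  Once $\Phi$ is flat, the base change $\tilde W\to Y$ is flat as well, which gives pure-dimensionality of $\tilde W$ directly and slightly streamlines your upper-bound step; your separate lower bound via the diagonal is then automatic.

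On reducedness: your hedge restricting to finitely many orbits is appropriate, since every application in the paper (the moving lemmas via the mixing group $\Gamma$) is in that setting, and the paper's citation does not spell out the general case either.  One point to tighten in your sketch: the scheme-theoretic preimages $Y_O=\phi^{-1}(O)$ and $Z_O=\psi^{-1}(O)$ need not be reduced for non-open orbits, so you should apply Kleiman's theorem to their reductions and then argue that, since $W_g$ is Cohen--Macaulay and hence has no embedded components, reducedness at the generic points (which your orbit-by-orbit argument supplies) suffices.
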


The proof is the same as those appearing in \cite{kleiman} and \cite{speiser}: the essential point is that the action map $G\times Z \to X$ is flat.

We will apply Proposition~\ref{p:transverse} repeatedly in the situation where $X=\FFl$ is the flag bundle (or approximation space), or a product of such spaces.  In what follows, we work with the setup:
\begin{itemize}
\item $\Omega_1,\ldots,\Omega_r \subseteq \Fl$ are Schubert varieties of codimensions $\ell_1,\ldots,\ell_r$, and $\tilde\Omega$ is an opposite Schubert variety, of codimension $\ell$.

\item $p_1,\ldots,p_r,p$ are fixed, distinct points of $\PP^1$.

\item $J=(j_1,\ldots,j_n)\in \{0,\ldots,m-1\}^n$ is an index.

\item $A = \ell_1+\cdots+\ell_r+\ell$.
\end{itemize}

As usual, let $M_{r}(\dd)$ denote the space of maps with $r$ marked points, let $\Mbar_{r}(\dd)$ be the Kontsevich space of stable maps, and let $\Mm_{r}(\dd)$ and $\Mmbar_{r}(\dd)$ be the corresponding approximate mixing spaces.

\begin{lemma}[Moving Lemma A]\label{l:move-mbar}
There are elements $\gamma_1,\ldots,\gamma_r$ in the mixing group $\Gamma$ such that
\[
 \bar{W}' = \ev^{-1}(\gamma_1\cdot\OOmega_1 \times \cdots \times \gamma_r\cdot\OOmega_r \times \tilde\OOmega) \cap (\Mmbar_{r+1}(\dd))_J
\]
is either empty, or reduced and pure dimensional of codimension $A$ inside $(\Mmbar_{0,r+1}(\dd))_J$.  Moreover,
\[
 W' = \ev^{-1}(\gamma_1\cdot\OOmega_1 \times \cdots \times \gamma_r\cdot\OOmega_r \times \tilde\OOmega) \cap (\Mm_{r+1}(\dd))_J 
\]
is Zariski-dense inside $\bar{W}'$.

In fact, $\gamma_1,\ldots,\gamma_r$ can be chosen so that the same results hold for
\[
 \bar{W} = \ev^{-1}(\gamma_1\cdot\OOmega_1 \times \cdots \times \gamma_r\cdot\OOmega_r \times \tilde\OOmega) \cap (\Mmbar_{r+1}(\dd))_J \cap \tilde{f}^{-1}(x)
\]
and
\[
 W = \ev^{-1}(\gamma_1\cdot\OOmega_1 \times \cdots \times \gamma_r\cdot\OOmega_r \times \tilde\OOmega) \cap (\Mm_{r+1}(\dd))_J \cap \tilde{f}^{-1}(x),
\]
where $x \in \Mbar_{0,r+1}$ is the point corresponding to $(p_1,\ldots,p_r,p)$, and $\tilde{f}$ is the composition $\Mmbar_{r+1}(\dd) \to \Mmbar_{0,r+1}=\BB \times \Mbar_{0,r+1} \to \Mbar_{0,r+1}$.
\end{lemma}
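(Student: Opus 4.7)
The plan is to invoke Proposition~\ref{p:transverse} with the mixing group $\Gamma$ acting componentwise on a product of flag bundles. Specifically, take $X := \FFl^r$ with the $\Gamma^r$-action, and set $Z := \OOmega_1 \times \cdots \times \OOmega_r \subseteq X$. By Lemma~\ref{l:gp-orbits}, the orbits of $\Gamma$ on $\FFl$ are the opposite Schubert cells $\tilde\OOmega^\circ_v$, hence the orbits of $\Gamma^r$ on $X$ are products of such cells. Dimensional transversality of $Z$ to these orbits reduces, via products, to the classical fact that $\OOmega_{w_i} \cap \tilde\OOmega^\circ_v$ has pure codimension $\ell_i + \codim \tilde\OOmega_v$ in $\FFl$ (or is empty), which holds fiberwise and is preserved by the mixing bundle structure.

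For the ``source'' in Proposition~\ref{p:transverse}, take $Y_{\mathrm{op}} := \ev_{r+1}^{-1}(\tilde\OOmega) \cap (\Mm_{r+1}(\dd))_J$ for the first part, and $Y_{\mathrm{op}} \cap \tilde f^{-1}(x)$ for the second. On the open locus of honest maps, $\ev_{r+1} \colon \Mm_{r+1}(\dd) \to \FFl$ is a flat surjection---its fiber, the space of $(r+1)$-pointed degree-$\dd$ maps with a prescribed value at $p_{r+1}$, is smooth and its isomorphism type is independent of the chosen value---and the structure map to $\BB$ is smooth, so $Y_{\mathrm{op}}$ is Cohen--Macaulay of codimension $\ell + |J|$ in $\Mm_{r+1}(\dd)$. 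The forgetful map $\tilde f$ is flat over $\Mbar_{0,r+1}$, on which $\Gamma$ acts trivially, so the fiber version is CM of the analogous codimension. Applying Proposition~\ref{p:transverse} to $Y_{\mathrm{op}}$ with the map $\ev_1 \times \cdots \times \ev_r \colon Y_{\mathrm{op}} \to X$ then yields $\gamma_1, \ldots, \gamma_r \in \Gamma$ for which $W'$ (respectively $W$) is reduced and pure-dimensional of codimension $\sum_i \ell_i$ in $Y_{\mathrm{op}}$, which is the claimed codimension.

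It remains to promote the conclusion from $W'$ to its counterpart $\bar W'$ in the Kontsevich compactification, i.e., to show that no irreducible component of $\bar W'$ is contained in the boundary $\Mmbar_{r+1}(\dd) \setminus \Mm_{r+1}(\dd)$. Here the $GL_n$-equivariance (hence $\Gamma$-equivariance) of the maps $h_\ee$, $\rho$, $\psi_\ee(z)$ in Theorem~\ref{t:strat} is crucial: the restriction of the intersection to any boundary stratum $\uu_\ee$ pulls back either from a degeneracy locus on the smaller quot scheme $\hq_{\dd - \ee}$ or from a Schubert condition on a smaller flag variety $\Fl(\nn')$, so by induction on $\dd$ (and, if needed, on $r$), each such restriction has codimension strictly greater than $A$ in $\uu_\ee$. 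Hence no component of $\bar W'$ lies in the boundary, simultaneously yielding the codimension count for $\bar W'$ and the Zariski density of $W'$ in $\bar W'$ (and similarly for $W \subseteq \bar W$). The main obstacle is precisely this boundary analysis: carefully bookkeeping the stratum-by-stratum codimensions in the equivariant setting---the equivariant analogue of the Bertram--Ciocan-Fontanine argument, made possible by the equivariance of the stratification morphisms.
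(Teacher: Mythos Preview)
Your boundary argument has a genuine gap: you invoke Theorem~\ref{t:strat} and the varieties $\uu_\ee$ to control $\bar W' \setminus W'$, but that theorem describes the boundary of the \emph{hyperquot scheme} $\hq_\dd$, not the Kontsevich space $\Mbar_{r+1}(\dd)$.  These are different compactifications of $\mor(\dd)$; the Kontsevich boundary consists of maps from nodal curves and is not stratified by the $\uu_\ee$'s.  So the inductive mechanism you describe (pulling back to degeneracy loci on $\hq_{\dd-\ee}$ or Schubert loci on $\Fl(\nn')$) simply does not apply here.  That stratification argument is exactly what the paper uses for Moving Lemma~B, on quot schemes, but it is not available for Moving Lemma~A.

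The paper sidesteps this entirely by working on the compactification from the outset.  It takes
\[
  Y = \ev^{-1}(\FFl\times_\BB\cdots\times_\BB\FFl\times_\BB\tilde\OOmega) \cap (\Mmbar_{r+1}(\dd))_J
\]
inside the full Kontsevich mixing space, observes that $Y$ is Cohen--Macaulay (since $\Mmbar_{r+1}(\dd)$ is CM and $Y$ has the expected codimension $\ell$), and applies Proposition~\ref{p:transverse} directly with $G=(\Gamma_0)^{r+1}\rtimes(PGL_N)^n$ acting on the fibered product $X=\FFl\times_\BB\cdots\times_\BB\FFl$.  This immediately gives that $\bar W'$ is reduced and of the right codimension.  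For the density of $W'$ in $\bar W'$, the paper applies the same proposition again with $Y$ replaced by the divisor $\partial Y = Y \cap (\partial\Mmbar_{r+1}(\dd))_J$, which is still CM; this shows $\bar W'\setminus W'$ has strictly smaller dimension.  No stratification of the boundary is needed.  Your approach of first establishing the result on the open locus and then extending also leaves unaddressed why the scheme $\bar W'$ (not just its support) is reduced at boundary points.
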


\begin{proof}
Apply Proposition~\ref{p:transverse}, using the group $G = \Gamma^{(r+1)} = (\Gamma_0)^{r+1}\rtimes (PGL_N)^n$, with
\begin{align*}
 X &= \FFl\times_\BB \cdots \times_\BB \FFl  \quad (r+1 \text{ factors}), \\
 Z &= \OOmega_1\times_\BB \cdots \times_\BB \OOmega_r \times_\BB \FFl \hookrightarrow X, \\
\intertext{and}
 Y &= \ev^{-1}(\FFl\times_\BB\cdots \times_\BB \FFl\times_\BB \tilde\OOmega) \cap (\Mmbar_{r+1}(\dd))_J \xrightarrow{\ev|_Y} X.
\end{align*}
Observe that $Y$ has codimension $\ell$ in $(\Mmbar_{r+1}(\dd))_J$, and is therefore Cohen-Macaulay, since $\Mmbar_{r+1}(\dd)$ is.

The statement about $W'$ follows by showing that
\[
  \bar{W}'\setminus W' = \partial \bar{W}' = \ev^{-1}(\gamma_1\cdot\OOmega_1 \times \cdots \times \gamma_r\cdot\OOmega_r \times \tilde\OOmega) \cap (\partial\Mmbar_{r+1}(\dd))_J
\]
has strictly smaller dimension than $\bar{W}'$, where $\partial\Mmbar_{r+1}(\dd) = \Mmbar_{r+1}(\dd)\setminus \Mm_{r+1}(\dd)$ is the boundary divisor.  For this, apply the proposition with $Y$ replaced by $\partial Y$, defined similarly.  (This divisor in $Y$ is again Cohen-Macaulay, for the same reasons.)
\end{proof}

In Moving Lemma A, $W$ can be identified with $\gamma_1\cdot\OOmega_1(p_1) \cap \cdots \cap \gamma_r\cdot\OOmega_r(p_r) \cap {\tilde\OOmega}(p) \cap (\Mm(\dd))_J$, where $M(\dd)$ is the space of degree $\dd$ maps $\PP^1 \to \Fl$, and $\Omega_i(p_i)$ is the locus sending $p_i$ into $\Omega_i$.  This gives us our enumerative interpretation of the EQLR coefficients.

\begin{corollary}\label{c:eqlr-enum}
Write the polynomial $c_{v_1,\ldots,v_r}^{w,\dd}(t)$ as $\sum_J c_{v_1,\ldots,v_r}^{w,\dd,J}\, (-\alpha)^J$.  Then for general $\gamma_1,\ldots,\gamma_k$ chosen as in Lemma~\ref{l:move-mbar}, we have
\[
  c_{v_1,\ldots,v_r}^{w,\dd,J} = \begin{cases} \#W & \text{if } \dim W = 0; \\ 0 & \text{otherwise.}\end{cases}
\]
Consequently, the EQLR coefficient $c_{v_1,\ldots,v_r}^{w,\dd}(t)$ is \emph{Graham-positive}; that is, when written as a polynomial in the variables $-\alpha_i$, it has nonnegative coefficients.
\end{corollary}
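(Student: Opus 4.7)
The plan is to extract each coefficient $c_{v_1,\ldots,v_r}^{w,\dd,J}$ as a pushforward on the approximate mixing space and then interpret it, via Moving Lemma~A, as a point count. Applying Lemma~\ref{l:coefficient} to the formula~\eqref{e:eqlr} gives
\[
  c_{v_1,\ldots,v_r}^{w,\dd,J} = \pi^{\Mmbar}_*\!\left( \ev_1^*\scl_{v_1}\cdots \ev_r^*\scl_{v_r}\cdot \ev_{r+1}^*\tilde\scl_{w^\vee} \cdot f^*[\pt] \cdot [\Mmbar_{r+1}(\dd)_J]\right),
\]
now computed in the ordinary cohomology of $\Mmbar_{r+1}(\dd)$. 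Since the mixing group $\Gamma$ is connected, it acts trivially on cohomology, so for any $\gamma_1,\ldots,\gamma_r \in \Gamma$ each $\ev_i^*\scl_{v_i}$ is represented by $[\ev_i^{-1}(\gamma_i \cdot \OOmega_{v_i})]$; likewise $\ev_{r+1}^*\tilde\scl_{w^\vee}$ is represented by $[\ev_{r+1}^{-1}(\tilde\OOmega_{w^\vee})]$ and $f^*[\pt]$ by $[\tilde{f}^{-1}(x)]$, where $x$ corresponds to the fixed configuration $(p_1,\ldots,p_r,p)$.

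Next I would choose $\gamma_1,\ldots,\gamma_r$ generically, so that the second half of Moving Lemma~A applies to $\bar{W}$: the locus $\bar{W}$ is either empty or a reduced, pure-dimensional subscheme of codimension $A$ inside $(\Mmbar_{r+1}(\dd))_J \cap \tilde{f}^{-1}(x)$, with $W$ Zariski-dense in $\bar{W}$. A quick dimension count yields
\[
  \dim \bar{W} \;=\; \dim \Fl(\nn) + \textstyle\sum_i d_i(n_{i+1}-n_{i-1}) + |J| - A,
\]
which is precisely $|J|$ minus the homogeneous degree of $c_{v_1,\ldots,v_r}^{w,\dd}(t)$. For any $|J|$ not equal to this degree the coefficient vanishes automatically, and the corresponding $\bar{W}$ either is empty or has positive dimension; in either case the pushforward is zero, matching the ``$0$ otherwise'' prescription.

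When $\dim \bar{W} = 0$, reducedness forces $\bar{W}$ to be a disjoint union of reduced points, and Zariski-density collapses to the equality $W = \bar{W}$. The transverse intersection number therefore equals $\#W$, and this coincides with the pushforward computed in the first display, giving $c_{v_1,\ldots,v_r}^{w,\dd,J} = \#W$. Graham-positivity is then immediate: in the positive basis $\{-\alpha_1,\ldots,-\alpha_n\}$, every coefficient of $c_{v_1,\ldots,v_r}^{w,\dd}(t)$ equals the cardinality of a (possibly empty) finite set, hence a nonnegative integer. The main obstacle in this plan is verifying the simultaneous transversality required by Moving Lemma~A once the fibre condition $\tilde{f}^{-1}(x)$ is imposed together with the linear cut $(\Mmbar_{r+1}(\dd))_J$; but this is precisely what the second half of Lemma~\ref{l:move-mbar} delivers, so no new technical input is needed beyond the moving lemma itself.
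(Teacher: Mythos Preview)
Your proposal is correct and follows essentially the same approach as the paper's own proof, which simply cites \eqref{e:eqlr}, the last part of Moving Lemma~A, and the identification $[\tilde{f}^{-1}(x)] = f^*[\pt]$ in $H^*\Mmbar_{r+1}(\dd)$. You have spelled out the dimension count, the role of connectedness of $\Gamma$, and the implicit use of Lemma~\ref{l:coefficient}, but the underlying argument is identical.
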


\begin{proof}
Use the description of $c_{v_1,\ldots,v_r}^{w,\dd}(t)$ given in \eqref{e:eqlr}, together with the last part of Moving Lemma A.  Recalling that $f:\Mbar_{r+1}(\dd) \to \Mbar_{0,r+1}$ denotes the forgetful map, note that $[\tilde{f}^{-1}(x)] = f^*[\pt]$ as classes in $H^*\Mmbar_{r+1}(\dd) = H_T^*\Mbar_{r+1}(\dd)$.
\end{proof}

Next we must prove an analogue of Moving Lemma A for quot schemes.  Continuing the notation, we write $\D_i \subseteq \PP^1 \times \hq_\dd$ for the degeneracy locus of \eqref{e:dw} corresponding to the Schubert variety $\Omega_i$, and we write $\Qq_\dd$ and $\DD_i$ for the mixing spaces.

\begin{lemma}[Moving Lemma B]\label{l:move-quot}
There are elements $\gamma_1,\ldots,\gamma_r$ in the mixing group $\Gamma$ such that
\[
 \bar{W}= \gamma_1\cdot\DD_1(p_1) \cap \cdots \cap \gamma_r\cdot\DD_r(p_r) \cap {\tilde\DD}(p) \cap (\Qq_\dd)_J
\]
is either empty, or reduced and pure dimensional of codimension $A$ inside $(\Qq_\dd)_J$.  Moreover,
\[
 W = \gamma_1\cdot\OOmega_1(p_1) \cap \cdots \cap \gamma_r\cdot\OOmega_r(p_r) \cap {\tilde\OOmega}(p) \cap (\Mm(\dd))_J
\]
is Zariski-dense inside $\bar{W}$.
\end{lemma}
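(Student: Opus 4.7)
The plan is to bootstrap from Moving Lemma~\ref{l:move-mbar}, controlling the boundary contribution $\Qq_\dd \setminus \Mm(\dd)$ via the stratification of Theorem~\ref{t:strat}. First I would apply Lemma~\ref{l:move-mbar} to produce $\gamma_1,\ldots,\gamma_r \in \Gamma$ such that $W$ inside $(\Mm(\dd))_J$ is either empty or reduced of pure codimension $A$. Since $(\Mm(\dd))_J$ is open in $(\Qq_\dd)_J$ and the degeneracy loci $\DD_i(p_i)$ restrict on $\Mm(\dd)$ to the pullbacks $\ev_{p_i}^{-1}(\OOmega_i)$, this $W$ is precisely the open-stratum portion of the intersection $\bar W$. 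It then suffices to show that $\bar W \setminus W$ has dimension strictly less than $\dim W$: this would imply that $\bar W$ equals the Zariski closure of $W$ in $(\Qq_\dd)_J$, and hence inherits from $W$ both reducedness and pure codimension $A$.

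To bound the boundary, I would stratify it by the maps $h_\ee \colon \Uu_\ee \to \Qq_\dd$ for $\ee \neq 0$ satisfying \eqref{e-conds}. By part~(3) of Theorem~\ref{t:strat}, each pullback $h_\ee^{-1}(\DD_i(p_i))$ decomposes as the union of a \emph{degree-lowered} piece $\rho^{-1}(\PP^1\times\DD_i^\ee(p_i))$ coming from the smaller hyperquot scheme $\Qq_{\dd-\ee}$ and a \emph{flag-lowered} piece $\psi_\ee(p_i)^{-1}(\OOmega_{i'})$ coming from the smaller partial flag bundle $\FFl(\nn')$, and similarly for $\tilde\DD(p)$. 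Since $h_\ee$, $\rho$, and $\psi_\ee$ are equivariant under the natural $GL_n$-action, these pullbacks are compatible with the mixing-group action after passing to approximation spaces. Applying Proposition~\ref{p:transverse} on each $\Uu_\ee$ to the translated loci, the pulled-back intersection has codimension at least $A$ on each $\Uu_\ee$ for generic $\gamma_1,\ldots,\gamma_r$.

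The key numerical input is the strict inequality $\dim \Uu_\ee < \dim \Qq_\dd$ for $\ee \neq 0$, which forces each boundary stratum to contribute a set of dimension strictly less than $\dim \bar W$, so that $W$ is Zariski-dense in $\bar W$ as required. The main obstacle is this uniform codimension bound on the strata $\Uu_\ee$: one must verify that a single generic choice of $(\gamma_1,\ldots,\gamma_r)$ simultaneously places both the degree-lowered factors (in $\Qq_{\dd-\ee}$) and the flag-lowered factors (in $\FFl(\nn')$) into general position, for every admissible $\ee$. A natural way to organize this is by induction on $\sum d_i$, with Lemma~\ref{l:move-mbar} handling the flag-bundle factor and the inductive hypothesis handling the smaller quot-scheme factor; alternatively, one can give a direct joint-transversality argument on each $\Uu_\ee$ using the smoothness of $\rho$ together with the description of $\Gamma$-orbits on $\FFl(\nn')$ from Lemma~\ref{l:gp-orbits}.
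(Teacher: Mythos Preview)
Your overall architecture matches the paper's: reduce to bounding the boundary contribution, stratify via the $\Uu_\ee$, and combine induction on $\dd$ with transversality. Two genuine gaps remain, however.

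First, you assert that on each $\Uu_\ee$ the pulled-back intersection has codimension at least $A$, and then cite $\dim\Uu_\ee < \dim\Qq_\dd$ as the ``key numerical input.'' The codimension-$A$ claim is not justified and is in fact stronger than what the paper proves or needs. When one of the factors is flag-lowered---say $h_\ee^{-1}(\gamma_r\cdot\DD_r(p_r))$ contributes $\psi_\ee(p_r)^{-1}(\gamma_r\cdot\OOmega_{w_r'})$---the codimension of that factor is governed by $\ell(w_r')$, the length of the associated permutation in $S^{\nn'}$, and one can have $\ell(w_r') < \ell_r - 1$. The paper instead establishes the weaker inequality
\[
  \codim_{\Uu_\ee}\Bigl(\textstyle\bigcap_i h_\ee^{-1}(\gamma_i\cdot\DD_i(p_i)) \cap h_\ee^{-1}(\tilde\DD(p))\Bigr) > A - \codim_{\Qq_\dd}(h_\ee(\Uu_\ee)),
\]
and for this it must invoke the specific estimates of \cite[Lemma~5.5(ii), Lemma~5.8(i)]{cf-partial} relating $\ell(w')$ to $\ell(w)$ and to $\codim_{\Qq_\dd}(h_\ee(\Uu_\ee))$. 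Your replacement input $\dim\Uu_\ee < \dim\Qq_\dd$ would suffice only if the codim-$\ge A$ bound held; by itself it does not supply the missing inequality.

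Second, you have not used the hypothesis that $p_1,\ldots,p_r,p$ are distinct. In the paper this is essential: since $\psi_\ee(p_i)^{-1}(\OOmega_{w_i'})\subset\Uu_\ee(p_i)=\rho^{-1}(\{p_i\}\times\Qq_{\dd-\ee})$ and these fibers are pairwise disjoint, at most \emph{one} flag-lowered factor can appear in any nonempty term of the expanded intersection. Without this the case analysis is not under control. Relatedly, the opposite locus $\tilde\DD(p)$ is \emph{not} translated by any $\gamma$, so the case where the single flag-lowered factor comes from $\tilde\DD(p)$ requires a separate argument: the paper handles it by noting that each $\DD_i^\ee(p_i)$ is $B$-invariant and hence already dimensionally transverse to the $\Gamma$-orbits on $\Qq_{\dd-\ee}$, allowing Proposition~\ref{p:transverse} to be applied with $Z=\DD_1^\ee(p_1)\times_\BB\cdots\times_\BB\DD_r^\ee(p_r)$ and $Y=\psi_\ee(p)^{-1}(\tilde\OOmega_{w'})$.
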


The proof is essentially the same as that of \cite[Theorem~4.3(ii)]{cf-partial}, using the mixing group action and Proposition~\ref{p:transverse} in place of the transitive group action on $\Fl(\nn)$.

\begin{proof}
In Moving Lemma A, we have already seen that $W$ is reduced and pure-dimensional of codimension $A$ (when nonempty).  To prove this lemma, it will suffice to show that $\bar{W} \cap (\partial\Qq_\dd)_J$ has codimension greater than $A$ in $(\Qq_\dd)_J$, where $\partial\Qq_\dd = \Qq_\dd\setminus\Mm(\dd)$ is the boundary.

Consider $\ee$ as in \eqref{e-conds}, with some $e_i>0$.  By induction on $\dd$, we may assume the statement holds for $\Qq_{\dd-\ee}$.  (The base case is $\Qq_\mathbf{0} = \FFl$, where Proposition~\ref{p:transverse} applies directly.)  Let $\Uu_\ee$ be the mixing space for the variety $\uu_\ee$ of Theorem~\ref{t:strat}.  Since the map $h_\ee:\Uu_\ee \to \Qq_\dd$ is birational onto its image, and $\partial\Qq_\dd$ is covered by such images, it will suffice to show
\begin{equation}\label{e:ineq}
  \codim_{\Uu_\ee}\left( \bigcap_{i=1}^r h_\ee^{-1}(\gamma_i\cdot\DD_i(p_i)) \cap h_\ee^{-1}(\tilde\DD(p)) \right) > A - \codim_{\Qq_\dd}(h_\ee(\Uu_\ee))
\end{equation}
for general $\gamma_1,\ldots,\gamma_r$ in $\Gamma$.  Using Theorem~\ref{t:strat}\eqref{t:strat3}, the intersection on the LHS can be written as
\begin{equation}
\begin{array}{c}
  \displaystyle{\bigcap_{i=1}^r \left( \rho^{-1}(\PP^1 \times \gamma_i\cdot\DD^{\ee}_i(p_i)) \cup \psi_\ee(p_i)^{-1}( \gamma_i\cdot \OOmega_{w'_i} ) \right)} \qquad \qquad \\
  \qquad \qquad \qquad \displaystyle{\cap \left( \rho^{-1}(\PP^1 \times \tilde\DD^{\ee}(p)) \cup \psi_\ee(p)^{-1}(\tilde\OOmega_{w'}) \right) }.
\end{array}
\end{equation}
(Here $\Omega_{w'_i} \subseteq \Fl(\nn')$ is the Schubert variety whose existence is claimed in Theorem~\ref{t:strat}, where $\Omega_i = \Omega_{w_i}$.)  Since the points $p_1,\ldots,p_r,p$ are distinct, any nonempty component of the expansion of this intersection contains at most one factor of $\psi_\ee(p_i)^{-1}(\OOmega_{w'_i})$ or $\psi_\ee(p)^{-1}(\tilde\OOmega_{w'})$.

If there are no factors of $\psi_\ee(p_i)^{-1}(\OOmega_{w'_i})$ or $\psi_\ee(p)^{-1}(\tilde\OOmega_{w'})$, the intersection is the inverse image of a similar intersection on $\Qq_{\dd-\ee}$.  By induction and the fact that $\rho$ is smooth, the inequality \eqref{e:ineq} holds for these components.

For the case where $\psi_\ee(p)^{-1}(\OOmega_{w_r'})$ occurs, let
\begin{align*}
 X &= \FFl(\nn'), \\
 Z &= \OOmega_{w_r'} \hookrightarrow X, \\
\intertext{and}
 Y &= \bigcap_{i=1}^{r-1} \rho^{-1}(\{p_r\}\times \gamma_i\cdot\DD_i(p_i)) \cap \rho^{-1}(\{p_r\} \times \tilde\DD(p)) \xrightarrow{\psi_\ee(p_r)} X.
\end{align*}
By the previous case, $Y$ has codimension $\ell_1+\cdots+\ell_{r-1}+\ell$ in $\Uu_\ee(p_r) = \rho^{-1}(\{p_r\}\times\Qq_{\dd-\ee})$.  By Proposition~\ref{p:transverse}, we can choose $\gamma_r\in\Gamma$ so that
\[
  Y \cap \psi_\ee(p_r)^{-1}(\gamma_r\cdot\OOmega_{w'_r})
\]
has codimension $\ell(w'_r)$ in $Y$.  The inequality \eqref{e:ineq} follows from the estimates on $\ell(w'_r)$ from \cite[Lemma~5.5(ii), Lemma~5.8(i)]{cf-partial}, as in \cite[\S6.1]{cf-partial}.

Finally, for the case where $\psi_\ee(p)^{-1}(\tilde\OOmega_{w'})$ occurs, first observe that each $D^\ee_i(p_i) \subseteq \hq_{\dd-\ee}$ is a $B$-invariant subscheme (where $B\subseteq GL_n$ is upper-triangular matrices), and it is dimensionally transverse to all $\tilde{B}$-orbits in $\hq_{\dd-\ee}$.  
It follows that $\DD^\ee_i(p_i)$ is dimensionally transverse to $\Gamma$-orbits on $\Qq_{\dd-\ee}$.  Now apply Proposition~\ref{p:transverse}, with
\begin{align*}
 X &= \Qq_{\dd-\ee}\times_\BB \cdots \times_\BB \Qq_{\dd-\ee}  \quad (r \text{ factors}), \\
 Z &= \DD^\ee_1(p_1) \times_\BB \cdots \times_\BB \DD^\ee_r(p_r) \hookrightarrow X, \\
\intertext{and}
 Y &= \psi_\ee(p)^{-1}(\tilde\OOmega_{w'}) \xrightarrow{\bar\rho \times \cdots \times \bar\rho} X,
\end{align*}
where $\bar\rho$ is the restriction of $\rho:\Uu_\ee \to \PP^1 \times \Qq_{\dd-\ee}$ to $\psi_\ee(p)^{-1}(\tilde\OOmega_{w'}) \subseteq \Uu_\ee(p) = \rho^{-1}(\{p\}\times \Qq_{\dd-\ee})$.  We find $\gamma=(\gamma_1,\ldots,\gamma_r)$ in $\Gamma^{(r)}$ such that
\[
  (\bar\rho\times\cdots\times\bar\rho)^{-1}(\gamma\cdot Z) = \bigcap_{i=1}^r \rho^{-1}(\PP^1 \times \gamma_i\cdot \DD^\ee_i(p_i)) \cap \psi_\ee(p)^{-1}(\tilde\OOmega_{w'})
\]
has codimension $\ell_1+\cdots+\ell_r$ in $Y$.  Since $Y$ has codimension $\ell(w')$ in $\Uu_\ee(p)$, the same estimate as in the previous case proves the inequality \eqref{e:ineq}.
\end{proof}

We can now complete the proof of Proposition~\ref{p:product-quot}.

\begin{corollary}\label{c:product-quot}
Writing $c_{v_1,\ldots,v_r}^{w,\dd}(t) = \sum_J c_{v_1,\ldots,v_r}^{w,\dd,J}\, (-\alpha)^J$ as above, we have
\[
  c_{v_1,\ldots,v_r}^{w,\dd,J} = \pi^{\Qq_\dd}_*( [\DD_1(p_1)]\cdots[\DD_r(p_r)]\cdot[{\tilde\DD}(p)]\cdot[(\Qq_d)_J] ),
\]
where $\pi^{\Qq_\dd}$ is the map $\Qq_\dd \to \pt$.
\end{corollary}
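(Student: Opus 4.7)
The plan is to show that both sides evaluate to the same integer $\#W$, where $W$ is the zero-dimensional intersection produced by the moving lemmas (or to $0$ when the expected dimension is negative).

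First, I would apply Lemma~\ref{l:coefficient} to the defining formula \eqref{e:eqlr} for the EQLR coefficient. Passing to the mixing space, this yields
\[
  c_{v_1,\ldots,v_r}^{w,\dd,J} = \pi^{\Mmbar}_*\bigl(\ev_1^*[\OOmega_{v_1}]\cdots\ev_r^*[\OOmega_{v_r}]\cdot\ev_{r+1}^*[\tilde\OOmega_{w^\vee}]\cdot f^*[\pt]\cdot[(\Mmbar_{r+1}(\dd))_J]\bigr),
\]
where $f^*[\pt] = [\tilde f^{-1}(x)]$ for the point $x\in\Mbar_{0,r+1}$ corresponding to $(p_1,\ldots,p_r,p)$. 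By Moving Lemma~A one may replace each $\OOmega_{v_i}$ by a general translate $\gamma_i\cdot\OOmega_{v_i}$ without changing the class, and — exactly as in Corollary~\ref{c:eqlr-enum} — the resulting pushforward equals $\#W$ or $0$ according to whether the intersection
\[
  W = \bigcap_i \gamma_i\cdot\OOmega_{v_i}(p_i)\cap \tilde\OOmega_{w^\vee}(p) \cap (\Mm(\dd))_J
\]
is zero-dimensional or empty.

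Next, I would treat the right-hand side of the corollary. Since the class $\mu_{v_i}=[\DD_{v_i}(p_i)]$ is $\Gamma$-invariant in cohomology, each $\DD_{v_i}(p_i)$ may be replaced by $\gamma_i\cdot\DD_{v_i}(p_i)$ for the same $\gamma_i$. Moving Lemma~B then guarantees that
\[
  \bar W = \bigcap_i \gamma_i\cdot\DD_{v_i}(p_i)\cap \tilde\DD_{w^\vee}(p) \cap (\Qq_\dd)_J
\]
is either empty or reduced and pure-dimensional of codimension $A+|J|$ inside $\Qq_\dd$, and that $W$ is Zariski-dense in $\bar W$. The two sides are dimensionally compatible: $\dim\Mmbar_{r+1}(\dd)-\dim\Qq_\dd = r-2$, precisely the codimension of $f^*[\pt]$, so $W$ and $\bar W$ have the same expected dimension. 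When that dimension is zero, density and reducedness force $\bar W = W$ as reduced zero-dimensional schemes, giving $\pi^{\Qq_\dd}_*[\bar W] = \#W$; when it is negative, both intersections are empty and both pushforwards vanish.

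The main technical point — and the reason a moving lemma on the quot scheme is needed in the first place — is to ensure that no stray contribution to the quot-scheme pushforward comes from the boundary $\Qq_\dd\setminus\Mm(\dd)$. This is exactly the density assertion in Moving Lemma~B, which rests on the mixing-group action of Section~\ref{s:mixing}; without it one could not place the degeneracy loci $\DD_{v_i}(p_i)$ into generically transverse position while working equivariantly.
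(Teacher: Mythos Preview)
Your proposal is correct and follows essentially the same approach as the paper: both sides are shown to equal $\#W$ (or $0$), using Moving Lemma~A and Corollary~\ref{c:eqlr-enum} for the left-hand side and Moving Lemma~B for the right-hand side. The paper's proof is simply terser, citing Corollary~\ref{c:eqlr-enum} directly rather than unpacking its content, but the logical structure is the same.
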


\begin{proof}
By Moving Lemma B, the right-hand side is equal to $\#W$.  However, this $W$ is the same as the one from Moving Lemma A, where $x$ corresponds to the point $(p_1,\ldots,p_r,p) \in \Mbar_{0,r+1}$, so the claim follows from Corollary~\ref{c:eqlr-enum}.  
\end{proof}

\section{Proofs of the main results}\label{s:proofs}

For a partial flag variety $\Fl(\nn)$, recall that the equivariant quantum Schubert polynomial $\Sch^q_w(x,t)$ is defined to be polynomial obtained from the universal double Schubert polynomial $\Sch_w(g,h)$ (or $\Sch_w(g,y)$) by the specialization \eqref{e:spec1}, setting the variables $g_i[0]$ to $x_i$, the variables $h_i[0]$ (or $y_i$) to the variables $t_i$, the variables $g_{n_{i-1}+1}[n_{i+1}-n_{i-1}-1]$ to $(-1)^{n_i-n_{i-1}+1}q_i$ for $1\leq i\leq m$, and setting all other $g_i[j]$ and $h_i[j]$ to zero.  Letting $\sigma_i^j$ be the $i$th symmetric polynomial in $x_{n_{j-1}+1},\ldots,x_{n_j}$, we write $\Sch^q_w(x,t)$ as a polynomial in the $\sigma_i^j$ and $t_i$, and denote this by $\Sch^q_w(\sigma,t)$.

In \eqref{e:q-vars}, we observed that $g_{n_{i-1}+1}[n_{i+1}-n_{i-1}-1]$ maps to the class
\[
   Q_{n_{i-1}+1}[n_{i+1}-n_{i-1}-1] = c^T_{n_{i+1}-n_{i-1}}(\ker(\Aa^*_{i+1}\to \Aa^*_i))
\]
in $H_T^*\hq_{\ul{d}}$.  Note that the same variable maps to $(-1)^{n_i-n_{i-1}+1}q_i$.  In the case of the complete flag variety, the class $Q_i[1]$ represents the locus on $\hq_\dd$ where $\Aa^*_{i+1} \to \Aa^*_i$ is not surjective, and the corresponding variable $g_i[1]$ specializes to $q_i$.

More generally, for any polynomial $P(g,y)$ in variables $g_i[j]$ and $y_i$, denote by $P^q(x,t)$ the polynomial which results after performing the above substitutions.  Denote by $e^q_k(l)$ the polynomial obtained by the above substitutions into $E_k^l(g)$, for $0\leq k\leq l$ (see \eqref{e:Ekl-inductive}).  Since $e^q_k(n)$ is symmetric in $x_{n_{j-1}+1},\ldots,x_{n_j}$ for every $1\leq j\leq m+1$, it can be written as a polynomial in the $\sigma_i^j$, which we denote by $\tilde{e}_k^q(m+1)$.

Our main goal is to prove the equivariant quantum Giambelli formula, which we restate here for convenience:

\begin{theorem} \label{t:sigma-usp}
For $w\in\Sn$, we have $\sigma_w = \Sch^q_w(\sigma,t)$ in $QH_T^*(\flag)$.
\end{theorem}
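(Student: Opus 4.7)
The plan is to use Corollary~\ref{c:poly-quot} to transfer the identity to a statement on the hyperquot schemes $\hq_\dd$, which can then be verified by a degree-by-degree cancellation using the boundary stratification of Theorem~\ref{t:strat} together with the equivariant Moving Lemma~B. Applying Corollary~\ref{c:poly-quot} with $F=\Sch^q_w(\sigma,t)$, expanding $\Sch^q_w=\sum_\ee \qq^\ee\,S_{w,\ee}(\sigma,t)$ in powers of $\qq$, and comparing coefficients of $\sigma_v$ and $\qq^\dd$, the theorem reduces to the family of identities
\begin{equation*}
 (\star)\qquad \sum_{\ee+\dd'=\dd}\bigl(S_{w,\ee}(\mu,t)\cdot\tilde\mu_{v^\vee}\bigr)^T_{\dd'} \;=\; \delta_{\dd,\mathbf 0}\,\delta_{w,v}, \qquad v\in\Sn,\ \dd\ge\mathbf 0,
\end{equation*}
where the pushforward $(\cdot)^T_{\dd'}$ takes place on $\hq_{\dd'}$ and $S_{w,\ee}(\mu,t)$ is the evaluation of $S_{w,\ee}$ at the corresponding classes on $\hq_{\dd'}$.

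For the base case $\dd=\mathbf 0$, only $\ee=\mathbf 0$ contributes, and $S_{w,\mathbf 0}$ is obtained by setting $\qq=0$ in $\Sch^q_w$; this undoes the quantum substitution and produces the classical double Schubert polynomial $\Sch_w(x,t)=\Sch^\nn_w(\sigma,t)$ for $w\in\Sn$, which represents $[\Omega_w]^T$ on $\hq_\mathbf 0=\Fl(\nn)$, so $(\star)$ reduces to the Poincar\'e duality identity~\eqref{e:eq-duality}. For $\dd\neq\mathbf 0$, using the identification $S_{w,\mathbf 0}(\mu,t)=\mu_w$ from Proposition~\ref{p:deg-mu}, the identity $(\star)$ is equivalent to
$$ (\mu_w\cdot\tilde\mu_{v^\vee})^T_\dd \;=\; -\!\!\sum_{\mathbf 0<\ee\le\dd}\!\bigl(S_{w,\ee}(\mu,t)\cdot\tilde\mu_{v^\vee}\bigr)^T_{\dd-\ee}. $$
By Lemma~\ref{l:coefficient} and Moving Lemma~B, each side is represented by a reduced, transverse intersection on a mixing space (with the enumerative interpretation of Corollary~\ref{c:eqlr-enum}). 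The geometric heart of the proof is the identification~\eqref{e:q-vars},
$$ Q_{n_{i-1}+1}[n_{i+1}-n_{i-1}-1]\;=\;c^T_{n_{i+1}-n_{i-1}}\bigl(\ker(\Aa^*_{i+1}\to\Aa^*_i)\bigr), $$
which is exactly the class being replaced by $(-1)^{n_i-n_{i-1}+1}q_i$ in defining $\Sch^q_w$. By Lemma~\ref{l:Q-zero}, this class is supported on the boundary $\hq_\dd\setminus M(\dd)$, which by Theorem~\ref{t:strat} is covered by images of the smooth varieties $\uu_\ee$; the morphisms $h_\ee:\uu_\ee\to\hq_\dd$ and $\rho:\uu_\ee\to\PP^1\times\hq_{\dd-\ee}$ allow each boundary contribution to $(\mu_w\cdot\tilde\mu_{v^\vee})^T_\dd$ to be rewritten as a pushforward from the lower-degree hyperquot scheme $\hq_{\dd-\ee}$, matching the $\qq^\ee$-term with the sign dictated by the top-Chern-class identity above. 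Induction on $|\dd|$ completes the verification of $(\star)$.

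The main obstacle is the sign-precise bookkeeping in the inductive step: showing that the boundary contribution of the stratum $\uu_\ee$ to the intersection on $\hq_\dd$ is exactly $-(S_{w,\ee}(\mu,t)\cdot\tilde\mu_{v^\vee})^T_{\dd-\ee}$, with the signs $(-1)^{n_i-n_{i-1}+1}$ coming out correctly. In the non-equivariant case \cite{chen} this identification proceeds via general-position arguments that are not available in equivariant cohomology; our substitute is Moving Lemma~B, which, together with the mixing group action of \S\ref{s:mixing}, reduces each intersection to a transverse one and makes the combinatorial arguments of \cite{chen} go through essentially verbatim in the equivariant setting.
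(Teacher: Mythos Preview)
There is a genuine gap. The identification $S_{w,\mathbf 0}(\mu,t)=\mu_w$ is not what Proposition~\ref{p:deg-mu} gives: that proposition says $\mu_w=\Sch^\nn_w(Q,t)$, a polynomial in \emph{all} the classes $Q_i[j]$ on $\hq_{\dd'}$, whereas your $S_{w,\mathbf 0}$ is obtained by first setting every $g_i[j]$ with $j>0$ to zero; the discrepancy is exactly the higher-$Q$ content the argument must track. (Relatedly, Corollary~\ref{c:poly-quot} is stated for polynomials in the Schubert-class variables $\sigma_v$, $v\in\Sn$, not in the ring generators $\sigma_i^j$, so applying it directly to $\Sch^q_w(\sigma,t)$ already requires care.) More seriously, the inductive step is only asserted. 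You claim each boundary stratum $\uu_\ee$ contributes precisely $-(S_{w,\ee}(\mu,t)\cdot\tilde\mu_{v^\vee})^T_{\dd-\ee}$, but Theorem~\ref{t:strat} and Moving Lemma~B supply only a covering of the boundary and dimensional transversality; they do not compute the class carried by each stratum, and you offer no excess-intersection or residue calculation tying it to the $\qq^\ee$-coefficient of $\Sch^q_w$. The appeal to \cite{chen} is misplaced, since that paper does not argue by stratum-by-stratum matching either.

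The paper's proof is structurally different: it runs a \emph{simultaneous} induction with Proposition~\ref{p:poly-in-Q} on $\ell(w)$ and $Q$-degree. The substantive inputs you bypass are Lemma~\ref{l:sigma-id}, which uses the $\CC^*$-action on $\PP^1$ (not the boundary stratification) to show directly that $(\mu_w\cdot\tilde\mu_{v^\vee})^T_\dd=0$ for every $\dd\neq\mathbf 0$; Lemma~\ref{l:single-Q}, which collapses $\sum_{\dd,v}\qq^\dd(Q_i[j]\cdot\tilde\mu_{v^\vee})^T_\dd\,\sigma_v$ for a single $Q_i[j]$ to at most one term; and Lemma~\ref{l:single-computation-Q}, which evaluates that term as $(-1)^{n_{l+1}-n_l+1}$ by reduction to the non-equivariant computation of \cite{chen}. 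Combined with Lemma~\ref{l:rewrite-g} (rewriting each $Q_i[j]$ in terms of $\mu_v$'s of bounded length), these produce the specialization~\eqref{e:spec1} one variable at a time---a very different mechanism from the boundary matching you propose.
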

 
In the course of proving this, we will simultaneously prove an auxiliary result, as in \cite{chen}.

\begin{proposition}\label{p:poly-in-Q}
Let $P(Q,t)$ be a polynomial in the classes $Q_i[j]$ and $t$, where $i+j\in\nn$.  Then
\[
 P^q(x,t) = \sum_{\ul{d},v} q^{\ul{d}}\, (P(Q,t)\cdot\tilde{\mu}_{v^\vee})^T_{\ul{d}} \,\sigma_v \text{ in }QH_T^*(\flag).
\]
\end{proposition}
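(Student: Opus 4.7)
My plan is to prove Proposition~\ref{p:poly-in-Q} by combining Lemma~\ref{l:rewrite-g} with Corollary~\ref{c:poly-quot}, and then to derive Theorem~\ref{t:sigma-usp} as the special case $P = \Sch_w^\nn(g,y)$. Given $P(Q,t)$, I would first lift it to $P(g,y) \in \ZZ[g,y]$ (using only the variables $g_i[j]$ with $i+j \in \nn$) and apply Lemma~\ref{l:rewrite-g} to write $P(g,y) = \hat P(\Sch^\nn(g,y),\, y)$ for some polynomial $\hat P$ in formal variables indexed by $\Sn$ and in the $y$'s. By Proposition~\ref{p:deg-mu}, on the quot scheme $P(Q,t) = \hat P(\mu,t)$, and Corollary~\ref{c:poly-quot} applied to $F = \hat P$ identifies the right-hand side of the Proposition with $\hat P(\sigma,t) \in QH_T^*\flag$. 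Applying the $q$-specialization to the polynomial identity above yields $P^q(x,t) = \hat P(\Sch^q(x,t),\, t)$ as polynomials, so the Proposition becomes equivalent to
\[
  \hat P(\sigma,t) \;=\; \hat P(\Sch^q(x,t),\, t) \qquad\text{in }QH_T^*\flag,
\]
which in turn follows as soon as Theorem~\ref{t:sigma-usp} is known for each $u \in \Sn$ appearing in $\hat P$.

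To close the apparent circularity, the two statements would be proved simultaneously by induction on $\ell(w)$. The base case $\ell(w)=0$ is immediate: $\Sch_e^q = 1 = \sigma_e$, and for $P$ independent of the $Q_i[j]$ the only surviving term on the right-hand side is $(v,\dd) = (e,0)$, since $(\tilde\mu_{v^\vee})^T_\dd$ vanishes by degree otherwise. For the inductive step at length $L$, assuming Theorem for all $u$ of length $<L$, I would for each target $w \in \Sn$ with $\ell(w) = L$ produce a simple polynomial $P$ --- for instance a single variable $Q_{i_0}[j_0]$ with $i_0+j_0 \in \nn$ and $j_0 + 1 = L$, possibly followed by a linear combination --- whose rewriting via Lemma~\ref{l:rewrite-g} takes the form $\hat P = c\,\Sch_w^\nn + R$, where $c \neq 0$ is a known constant and $R$ involves only strictly shorter $\Sch_u^\nn$'s and the $y$'s. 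Applying the argument above to this $P$, the inductive hypothesis lets us substitute $R(\sigma,t) = R(\Sch^q,t)$, so that the identity $\hat P(\sigma,t) = \hat P(\Sch^q,t)$ collapses to $c\,\sigma_w = c\,\Sch_w^q(x,t)$, yielding Theorem for $w$.

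The hard part will be arranging, for every $w \in \Sn$ of length $L$, a choice of $P$ whose expansion via Lemma~\ref{l:rewrite-g} cleanly isolates $\Sch_w^\nn$ with a known coefficient alongside strictly shorter Schubert polynomials. This is essentially a combinatorial statement about the inductive decompositions in Fulton's construction of universal Schubert polynomials, and it is the combinatorial heart of the induction, paralleling the non-equivariant argument of \cite{chen} and promoted here to the equivariant setting by Proposition~\ref{p:product-quot} together with the equivariant moving lemmas of \S\ref{s:moving}.
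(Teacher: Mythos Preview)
Your first paragraph is correct and matches the paper exactly: lifting $P$ to $\ZZ[g,y]$, rewriting via Lemma~\ref{l:rewrite-g} as $\hat P(\Sch^\nn(g,y),y)$, and applying Corollary~\ref{c:poly-quot} does show that the Proposition for $P$ is equivalent to $\hat P(\sigma,t)=\hat P(\Sch^q,t)$, hence follows from the Theorem for all $u$ appearing in $\hat P$.

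The inductive step, however, is circular. Suppose you find $P$ with $\hat P = c\,X_w + R$ where $R$ involves only permutations of length $<L$. Your argument then reads: ``the identity $\hat P(\sigma,t)=\hat P(\Sch^q,t)$ collapses to $c\,\sigma_w = c\,\Sch_w^q$.'' But that identity \emph{is} the Proposition for this $P$, and you have not established it---your first paragraph only shows it is \emph{equivalent} to the Theorem for all $u$ in $\hat P$, one of which is $w$ itself. You have proved ``Proposition for $P$ $\iff$ Theorem for $w$'' but neither side independently. What is missing is an \emph{independent} computation of $\sum_{\dd,v} q^{\dd}\,(P(Q,t)\cdot\tilde\mu_{v^\vee})^T_{\dd}\,\sigma_v$ for your chosen $P$, something not deducible from Corollary~\ref{c:poly-quot} alone.

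The paper closes the loop in the opposite direction. It takes $P'=\Sch_w(Q,t)$ (so $\hat{P'}=X_w$), and Lemma~\ref{l:sigma-id} gives the independent starting point $\sigma_w = \sum_{\dd,v} q^{\dd}\,(\Sch_w(Q,t)\cdot\tilde\mu_{v^\vee})^T_{\dd}\,\sigma_v$. Then $\Sch_w(Q,t)$ is split as a piece of $Q$-degree $\le s$ (handled by the inductive Proposition) plus linear terms $a_i Q_i[s]$. The latter are evaluated by direct geometric computations on quot schemes: Lemma~\ref{l:single-Q} (a vanishing argument using surjectivity of bundle maps when certain $d_a=0$) and Lemma~\ref{l:single-computation-Q} (the actual intersection number $(-1)^{n_{l+1}-n_l+1}$, reduced to the non-equivariant case). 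These two lemmas are the substantive input you are missing; without some analogue of them the induction cannot advance from length $s$ to $s+1$.

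Separately, your ``hard part'' is not merely hard but false as stated: for a single $Q_{i_0}[L-1]$ the expansion via Lemma~\ref{l:rewrite-g} will typically involve several $\Sch_u^\nn$ with $\ell(u)=L$, and the number of variables $Q_i[L-1]$ with $i+L-1\in\nn$ is far smaller than the number of $w\in\Sn$ of length $L$ (already for $\Fl(4)$, $L=2$: two variables versus five permutations). No linear combination of these variables can isolate each $w$.
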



Before proceeding to the proofs, we note that the presentations of the cohomology ring of $\Fl(\nn)$ given in \cite{kim-eq} can be deduced from  Proposition~\ref{p:poly-in-Q}.  Recall from \S\ref{ss:eqcoh} that $\Lambda_T = H_T^*(\pt) \isom \ZZ[t_1,\ldots,t_n]$, and let $q$ and $x$ stand for the variable sets $(q_1,\ldots,q_m)$ and $(x_1,\ldots,x_n)$, respectively.

\begin{corollary}\label{c:presentation}
We have
\[
 QH_T^*(\Fl(\nn)) \isom \Lambda_T[q][\sigma_1^1,\ldots,\sigma_{n_1}^1,\ldots,\sigma_{1}^{{m+1}},\ldots,\sigma_{n-n_m}^{{m+1}}]/I_T^q
\]
where $I^q_T$ is the ideal $(\tilde{e}_1^q(m+1)-e_1(t),\ldots,\tilde{e}_n^q(m+1)-e_n(t))$.  In the special case of the complete flag variety $\Fl(\CC^n)$, this gives
\[
 QH_T^*(\Fl(\CC^n)) \isom \Lambda_T[q][x]/(e_1^q(n)-e_1(t),\ldots,e_n^q(n)-e_n(t)).
\]
\end{corollary}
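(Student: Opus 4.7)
The plan is to construct a $\Lambda_T[q]$-algebra surjection
\[
 \phi \colon R := \Lambda_T[q][\sigma_1^1,\ldots,\sigma_{n-n_m}^{m+1}]/I_T^q \twoheadrightarrow QH_T^*(\Fl(\nn)),
\]
sending each generator to its namesake class, and then to conclude that $\phi$ is an isomorphism by comparing ranks as free modules over $\Lambda_T[q]$. To see that $\phi$ is well defined, I would use Proposition~\ref{p:deg-mu} and the definition of $\tilde E_k^l(Q)$ to observe that $\tilde E_k^{m+1}(Q)$ represents the equivariant Chern class $c^T_k(\Aa^*_{m+1})$ on the hyperquot scheme $\hq_\dd$. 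Since $\Aa_{m+1}=V^*_{\PP^1\times \hq_\dd}$ is a trivial bundle whose equivariant structure comes from the diagonal $T$-action on $V=\CC^n$ with weights $t_1,\ldots,t_n$, we have $c^T_k(\Aa^*_{m+1})=e_k(t)$, and hence $P(Q,t):=\tilde E_k^{m+1}(Q)-e_k(t)=0$ in $H_T^*(\hq_\dd)$ for every $\dd$. Applying Proposition~\ref{p:poly-in-Q} then yields $P^q(x,t)=\tilde e_k^q(m+1)-e_k(t)=0$ in $QH_T^*(\Fl(\nn))$, so the relations cutting out $I_T^q$ do hold. Surjectivity of $\phi$ then follows at once from the equivariant quantum Giambelli formula (Theorem~\ref{t:sigma-usp}), which expresses each Schubert basis element $\scl_w$ as a polynomial $\Sch_w^q(\sigma,t)$ in the generators $\sigma_i^j$, $t_i$, and $q_i$.

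The step I expect to require the most care is injectivity, which I would handle by a regular-sequence / Koszul argument. Grade $R$ by giving $\sigma_i^j$ and $t_i$ degree $i$ and $q_i$ degree $n_{i+1}-n_{i-1}$, so that each $f_k := \tilde e_k^q(m+1)-e_k(t)$ is homogeneous of degree $k$. The classical presentation (Theorem~\ref{t:pres}) tells us that modulo $(t_i,q_i)$ the $f_k$ reduce to $\tilde e_k(m+1)$, a regular sequence in $\ZZ[\sigma_i^j]$ whose quotient $H^*(\Fl(\nn))$ is free of rank $|\Sn|$ over $\ZZ$. Combined with the tautological regularity of $(t_1,\ldots,t_n,q_1,\ldots,q_m)$ in $\Lambda_T[q][\sigma_i^j]$ and the standard fact that a sequence of positive-degree homogeneous elements in a positively graded Noetherian ring can be reordered while remaining regular, this shows that $(f_1,\ldots,f_n)$ is itself a regular sequence in $\Lambda_T[q][\sigma_i^j]$. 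The Koszul complex then exhibits $R$ as a free $\Lambda_T[q]$-module of rank $|\Sn|$, and since $QH_T^*(\Fl(\nn))$ has the same rank by construction, the surjection $\phi$ between free modules of equal finite rank must be an isomorphism. The complete-flag case $\nn=\{1,\ldots,n-1\}$ drops out as the specialization in which $\tilde e_k^q(m+1)$ becomes $e_k^q(n)$.
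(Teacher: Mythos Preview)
Your argument is correct and its core coincides with the paper's: both establish that the relations $\tilde e_k^q(m+1)-e_k(t)$ vanish in $QH_T^*(\Fl(\nn))$ by applying Proposition~\ref{p:poly-in-Q} to $P(Q,t)=\tilde E_k^{m+1}(Q)-e_k(t)$, which is identically zero on every $\hq_\dd$ because $\Aa_{m+1}^*\cong V$ equivariantly. The paper then simply observes that these $n$ relations specialize (at $q=0$) to the known presentation of $H_T^*(\Fl(\nn))$ and declares that the isomorphism follows by the standard graded deformation argument (citing \cite[Theorem~7.1]{chen}), which is precisely your regular-sequence reasoning spelled out.

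The one organizational difference worth flagging: the paper deliberately derives the presentation from Proposition~\ref{p:poly-in-Q} \emph{alone}, without invoking the Giambelli formula. Surjectivity comes instead from graded Nakayama---since $\phi$ modulo $q$ is the known isomorphism onto $H_T^*(\Fl(\nn))$, the images of the $\sigma_i^j$ generate $QH_T^*$ under the quantum product. Your shortcut via Theorem~\ref{t:sigma-usp} is perfectly valid once that theorem is in hand, but it slightly obscures the logical point the paper is making (stated just before the corollary) that the presentation already follows from the weaker input. As a minor quibble, your sentence ``The Koszul complex then exhibits $R$ as a free $\Lambda_T[q]$-module'' is a bit loose: the Koszul complex resolves $R$ over $\Lambda_T[q][\sigma_i^j]$, not over $\Lambda_T[q]$. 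What you actually use is that regularity of the reordered sequence makes $(t_i,q_j)$ regular on $R$, which together with $R/(t,q)\cong H^*(\Fl(\nn))$ free over $\ZZ$ forces $R$ to be free over $\Lambda_T[q]$ of the correct rank.
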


\begin{proof}
The argument is the same as in \cite[Theorem~7.1]{chen}; we sketch it here.  We have $E_k^n(Q) = c_k^T(A^*_n) = c_k^T(V^*) = e_k(t_1,\ldots,t_n)$, since $V^*$ is a trivial (but not equivariantly trivial) vector bundle.  Now apply Proposition~\ref{p:poly-in-Q} to the polynomials $P(Q,t) = E_k^n(Q)-e_k(t) = 0$, for $k = 1,\ldots,n$, obtaining $n$ relations which specialize to the known relations defining $H_T^*(\Fl(\nn))$.  The claim follows.
\end{proof}

In the proof of Theorem~\ref{t:sigma-usp} and Proposition~\ref{p:poly-in-Q}, we will use three lemmas.

\begin{lemma}
\label{l:sigma-id}
For $w\in\Sn$, $\sigma_w = \sum_{\ul{d},v} q^{\ul{d}}\,(\mu_w\cdot\tilde{\mu}_{v^\vee})^T_{\ul{d}}\,\sigma_v$ in $QH_T^*(\flag)$.
\end{lemma}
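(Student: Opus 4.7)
My plan is to deduce the lemma immediately from Proposition~\ref{p:product-quot} by inserting an identity Schubert class. The key observation is that the identity permutation $e \in \Sn$ gives rise to the trivial class in both cohomology theories involved: in $H_T^*\Fl(\nn)$ the rank condition $r_e(p,q) = \min(p,q)$ is automatically satisfied, since $\rk(E_q \to V/V_p) \leq \min(q, n_p)$ for every flag, so that $\Omega_e = \Fl(\nn)$ and $\sigma_e = 1$. For each degree $\dd$, applying the same rank inequality to the chain of sheaves in \eqref{e:sequence} on $\PP^1 \times \hq_\dd$ shows $\D_e = \PP^1 \times \hq_\dd$, hence $\mu_e = [\D_e(z)]^T = 1 \in H_T^*\hq_\dd$.

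With these triviality observations in hand, I would apply Proposition~\ref{p:product-quot} with $r = 2$, $v_1 = e$, and $v_2 = w$, which gives
\[
 \sigma_e \qtp \sigma_w \;=\; \sum_{v, \dd} q^\dd \, (\mu_e \cdot \mu_w \cdot \tilde{\mu}_{v^\vee})^T_\dd \, \sigma_v.
\]
The left-hand side collapses to $\sigma_w$ because $\sigma_e$ is the identity for $\qtp$, and the right-hand side simplifies via $\mu_e \cdot \mu_w = \mu_w$ in $H_T^*\hq_\dd$. Together these reductions yield exactly the claimed identity.

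The main obstacle is not really in the present lemma but in setting up the machinery it invokes: all the substantive content is packaged into Proposition~\ref{p:product-quot}, which in turn relies on the equivariant moving lemma (Lemma~\ref{l:move-quot}) to identify EQLR coefficients with intersection numbers computed on hyperquot schemes. Once that input is available, the only point in the present argument requiring care is the uniform-in-$\dd$ triviality of $\mu_e$, which is immediate from the definition of $\D_w$ in \eqref{e:dw}.
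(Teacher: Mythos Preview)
Your argument is correct. Proposition~\ref{p:product-quot} is established in \S\ref{s:moving} independently of Lemma~\ref{l:sigma-id}, so there is no circularity in applying it with $r=2$, $v_1=e$, $v_2=w$; and your verification that $\mu_e=1$ from the rank bound $\rk(E_q\to\Aa_p^*)\leq\min(q,n_p)=r_e(n_p,q)$ is exactly right.

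The paper takes a different route: it proves the stronger, term-by-term statement that $(\mu_w\cdot\tilde\mu_{v^\vee})^T_{\dd}=\delta_{\dd,\mathbf{0}}\,\delta_{w,v}$. For $\dd=\mathbf{0}$ this is equivariant Poincar\'e duality on $\flag$. For $\dd\neq\mathbf{0}$, the paper uses the $r=1$ case of the moving lemma to see that $D_w(0)\cap\tilde D_{v^\vee}(\infty)$ is a transverse intersection supported in $\mor(\dd)$, and then observes that this set is invariant under the $\CC^*$ reparametrizing $\PP^1$, which has no fixed points on nonconstant maps; hence the intersection is empty. Your approach is shorter and avoids the $\CC^*$-action trick entirely. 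The trade-off is that the paper's argument yields the explicit vanishing $(\mu_u\cdot\tilde\mu_{v^\vee})^T_{\dd}=0$ for $\dd\neq\mathbf{0}$ or $u\neq v$, which is invoked again verbatim in the proof of Lemma~\ref{l:ideal}; your proof of Lemma~\ref{l:sigma-id} as stated does not by itself supply that ingredient.
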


%
%

\begin{proof} 
When $\ul{d}=0$, the quot scheme $\quot_{\dd}$ is the flag variety $\flag$, and in this case duality (see (\ref{e:eq-duality})) gives $(\mu_w\cdot\tilde{\mu}_{v^\vee})^T_{\ul{0}}= [\Omega_w]^T\cdot[\tilde{\Omega}_{v^\vee}]^T=\delta_{wv}$.

More generally, we have
\[
  (\mu_w\cdot\tilde{\mu}_{v^\vee})^T_{\dd} = \qpi^T_*(\mu_w \cdot \tilde\mu_{v^\vee}) = \qpi^T_*([D_w(0)]\cdot[\tilde{D}_{v^\vee}(\infty)]).
\]
For degree reasons, $(\mu_w\cdot\tilde{\mu}_{v^\vee})^T_{\dd}$ can be nonzero only when $\ell(w)+\ell(v^\vee) \geq \dim \quot_{\dd}$.  
Using Lemma~\ref{l:move-quot}, in the case $r=1$, one sees that $D_w(0)$ and $\tilde{D}_{v^\vee}(\infty)$ are dimensionally transverse.  It follows that
\[
  [D_w(0)]\cdot[\tilde{D}_{v^\vee}(\infty)] = [D_w(0)\cap \tilde{D}_{v^\vee}(\infty)],
\]
and that $D_w(0)\cap \tilde{D}_{v^\vee}(\infty)$ is empty when $\ell(w)+\ell(v^\vee) > \dim \quot_{\dd}$.  
Therefore $(\mu_w\cdot\tilde{\mu}_{v^\vee})^T_{\dd}$ can be nonzero only when $\ell(w)+\ell(v^\vee) = \dim \quot_{\dd}$; that is, when $D_w(0)\cap \tilde{D}_{v^\vee}(\infty)$ consists of finitely many points.  Moreover, Lemma~\ref{l:move-quot} also implies $D_w(0)\cap \tilde{D}_{v^\vee}(\infty) \cap \mor(\dd)$ is dense in $D_w(0)\cap \tilde{D}_{v^\vee}(\infty)$, so these points must lie in $\mor(\dd)$.

Finally, recall there is an action of $\CC^*$ on $\quot_\dd$, coming from the standard action on $\PP^1$, and $D_w(0)\cap \tilde{D}_{v^\vee}(\infty)$ is stable under this $\CC^*$ action.  When $\dd\neq 0$, this action has no fixed points inside $\mor(\dd)$, since a non-constant map is changed by reparametrization.  Therefore $D_w(0)\cap \tilde{D}_{v^\vee}(\infty)$ is empty whenever $\ell(w)+\ell(v^\vee) = \dim \quot_{\dd}$, and it follows that $(\mu_w\cdot\tilde{\mu}_{v^\vee})^T_{\dd}=0$ in this case.
\end{proof}

The following lemma shows that a similar sum which involves a single $Q_i[j]$ only has one non-zero  term. Let $\ul{e}_l=(0,\ldots,1,\ldots,0)$ be the $m$-tuple whose only nonzero entry is in the $l$th position. 
\begin{lemma}
\label{l:single-Q} 
For $1\leq l\leq m$ and $1\leq i\leq n_{l}$, 
 \[
\sum_{\ul{d},v} q^{\ul{d}} \,(Q_i[n_{l+1}-i]\cdot\tilde{\mu}_{v^\vee})^T_{\ul{d}}\, \sigma_v= (Q_{n_{l-1}+1}[n_{l+1}-n_{l-1}-1]\cdot\tilde{\mu}_{w^\circ})^T_{\ul{e}_l}
\] 
if $i= n_{l-1}+1$, and is equal to zero otherwise.


\end{lemma}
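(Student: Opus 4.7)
The plan is to follow the strategy of Lemma~\ref{l:sigma-id}: exploit the vanishing of $Q_i[n_{l+1}-i]$ on the ``interior'' of the quot scheme, then use the boundary stratification together with a degree count to isolate the unique surviving contribution.

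The starting point is that on $\mor(\dd) \subseteq \hq_\dd$ the successor map $\Aa_{l+1}^* \to \Aa_l^*$ is a surjection of vector bundles, so by Lemma~\ref{l:Q-zero} the class $Q_i[n_{l+1}-i]$ vanishes on $\mor(\dd)$ whenever $i \le n_l$. In the classical case $\dd = \ul 0$ we have $\hq_{\ul 0} = \Fl(\nn) = \mor(\ul 0)$, so every $\dd = \ul 0$ summand on the left-hand side is zero; this already handles the $\dd = \ul 0$ part for any $i$ in the allowed range. The same observation shows that for $\dd \ne \ul 0$ the class $Q_i[n_{l+1}-i]$ is supported on the boundary $\hq_\dd \setminus \mor(\dd)$, so only boundary contributions can survive.

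For the remaining $\dd \ne \ul 0$ terms I would apply Theorem~\ref{t:strat}: the boundary is covered by images $h_\ee(\Uu_\ee)$ for $\ee$ with some $e_k > 0$, and $\Uu_\ee$ is simultaneously a $\rho$-bundle over $\PP^1 \times \hq_{\dd-\ee}$ and maps via $\psi_\ee$ into $\Fl(\nn')$. By the projection formula each coefficient $(Q_i[n_{l+1}-i]\cdot\tilde\mu_{v^\vee})^T_\dd$ rewrites as an integral on some $\Uu_\ee$. Because $Q_i[n_{l+1}-i]$ records the failure of $\Aa_{l+1}^* \to \Aa_l^*$ to be surjective, its pullback $h_\ee^* Q_i[n_{l+1}-i]$ vanishes unless $e_l > 0$; induction on $|\dd|$ (with the dimensional transversality supplied by Moving Lemma~B) refines this to $\ee = \ul e_l$ and $\dd = \ul e_l$. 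Restricted to $\Uu_{\ul e_l}$, the map $\rho$ lands in $\PP^1\times\Fl(\nn)$, so the integral factors through $\Fl(\nn)$; equivariant Poincar\'e duality there, together with the degree count $\deg Q_i[n_{l+1}-i] = n_{l+1}-i+1$ versus $\deg q_l = n_{l+1}-n_{l-1}$ and $\ell(v) \ge 0$, forces $v = e$ (so $v^\vee = w^\circ$) and $i = n_{l-1}+1$. For $i \ne n_{l-1}+1$ in the range $1 \le i \le n_l$ the same degree count rules out every surviving index and the entire sum vanishes, as claimed.

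The main obstacle will be the boundary analysis, specifically verifying that $h_\ee^*(Q_i[n_{l+1}-i]) = 0$ when $\ee \ne \ul e_l$ and that the $\ee = \ul e_l$ contribution factors cleanly through $\Fl(\nn)$. This should proceed by applying Lemma~\ref{l:Q-zero} to the pulled-back universal $\Aa^*$-bundles on $\Uu_\ee$ (where, for $e_k > 0$ with $k \ne l$, the relevant maps become surjections away from the $l$th position), combined with the length/codimension estimates from \cite[Lem.~5.5, 5.8]{cf-partial} that were already invoked in the proof of Moving Lemma~B.
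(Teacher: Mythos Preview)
Your approach is substantially more complicated than necessary and has real gaps.  The paper's proof does not use the boundary stratification (Theorem~\ref{t:strat}) or any support/projection-formula argument at all.  Instead it uses a much stronger vanishing: if $d_a=0$ for an index $a$ in a suitable range, then $Q_i[n_{l+1}-i]$ vanishes \emph{as a class in $H_T^*(\hq_\dd)$}, not merely on the open locus $\mor(\dd)$.  The reason is that when $d_a=0$, each fiberwise quotient $\Bb_a$ is a degree-$0$ globally generated sheaf on $\PP^1$, hence locally free; thus $\Aa_a\hookrightarrow V^*$ is a subbundle and $\Aa_b^*\to\Aa_a^*$ is surjective everywhere on $\hq_\dd$, so Lemma~\ref{l:Q-zero} kills $Q_i[n_{l+1}-i]$ on the whole quot scheme.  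Letting $l'$ be the index with $n_{l'}+1\le i\le n_{l'+1}$, this forces $d_{l'+1},\ldots,d_l>0$ for any nonzero contribution, and then a straight degree comparison
\[
n_{l+1}-i+1+\ell(v^\vee)\;\ge\;\dim\hq_\dd\;\ge\;\dim\Fl(\nn)+n_{l+1}+n_l-n_{l'}-n_{l'+1}
\]
against the upper bound $n_{l+1}-n_{l'}+\dim\Fl(\nn)$ forces $l'=l-1$, $i=n_{l-1}+1$, $v^\vee=w^\circ$, and $\dd=\ul e_l$.  No stratification, no induction.

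The specific problems in your outline: (i) knowing a cohomology class restricts to zero on an open set does \emph{not} let you ``rewrite the integral on some $\Uu_\ee$'' by a projection formula---the boundary is not a divisor and $Q_i[n_{l+1}-i]$ is a Chern class, not a cycle supported there; (ii) the assertion that $h_\ee^*Q_i[n_{l+1}-i]=0$ when $e_l=0$ is not justified and is not obviously true (the relevant map need not become surjective on $\Uu_\ee$ just because $e_l=0$); (iii) even granting $\dd=\ul e_l$, your final degree count only gives $i\le n_{l-1}+1$, not $i=n_{l-1}+1$, so the case $i\le n_{l-1}$ is not handled.  All of these difficulties disappear once you use the global vanishing above.
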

\begin{proof}
Note that when $d_a=0$, $A_a\rightarrow A_b$ is an inclusion of vector bundles on $\hq_\dd$ for all $b>a$, or equivalently $A_b^*\rightarrow A_a^*$ is a surjection of bundles on $\hq_\dd$.

Let $l'<l$ be such that $n_{l'}+1\leq i\leq n_{l'+1}$. If $d_a=0$ for any $l'+1\leq a< l$, then by Lemma \ref{l:Q-zero} applied to $b=l+1$, we obtain $Q_{i}[n_{l+1}-i]=0$, so that $Q_{i}[n_{l+1}-i]\neq 0$ only if $d_{l'+1},\ldots,d_{l-1}> 0$.

By this and degree considerations, we conclude that $(Q_i[n_{l+1}-i]\cdot\tilde{\mu}_{v^\vee})^T_{\ul{d}}$ is zero unless 
\begin{align*} 
n_{l+1}-i+1+\ell(v^\vee)& \geq \dim \quot_{\ul{d}} =  \dim F + \sum_{k=1}^m d_k(n_{k+1}-n_{k-1})\\
& \geq \dim F + n_{l+1}+n_l-n_{l'}-n_{l'+1}.
\end{align*}
Since $n_{l+1}-n_{l'}+\dim F \geq 
n_{l+1}-i+1+\ell(v^\vee)$, these inequalities can only hold when $l'=l-1$ and every inequality is an equality. Therefore $\ul{d}=\ul{e}_l$, $i=n_{l-1}+1$, and $v^\vee=w^\circ$, the permutation of longest length. 
\end{proof}

\excise{

The following remark allows us to reduce certain equivariant computations to computations in ordinary cohomology.
\begin{remark}
\label{r:non-equivariant}
For a class $\gamma\in H_T^*(\quot_\dd)$, denote by $\gamma^\circ$ the class obtained by setting the equivariant parameters $t_i=0$. Let $\gamma\in H_T^{2(\dim \quot_\dd-\ell(w^\circ))}(\quot_\dd)$. Since 
$$\gamma=\gamma^\circ +\sum \gamma^i \times ( \text{positive degree polynomial in $t$}), $$
by a dimension count, $(\gamma^i\cdot \tilde{\mu}_{w^\circ})_\dd^T=0$ so that we obtain
\[(\gamma \cdot \tilde{\mu}_{w^\circ})_\dd^T = (\gamma^\circ \cdot \tilde{\mu}_{w^\circ})_\dd^T.
\]
Note that in $H_T^*(\quot_\dd)$, since $\mu_w = \Sch^\nn_w(c,t)$, we have $\mu_w^\circ =\Sch^\nn_w(c)$. In particular, we obtain $\mu^\circ_{\alpha_{k,l}}=c_k(A_l^*)$, and by Remark \ref{r:special-usp}, $\mu^\circ_{\beta_{i,l}}=\Sch^
\nn_{\beta_{i,l}}(c)=c_i(-A_l^*)$ for $0
\leq i \leq n_{l+1}-n_l$.

\end{remark}

\begin{lemma}
\label{l:single-computation-Q} Assume that Proposition~\ref{p:poly-in-Q} holds for polynomials
$P(Q,t)$ of $Q$-degree up to $n_{l+1}-n_{l-1}-1$. Then 
\[(Q_{n_{l-1}+1}[n_{l+1}-n_{l-1}-1]\cdot\tilde{\mu}_{w^\circ})^T_{\ul{e}_l} = (-1)^{n_{l+1}-n_l+1}.\]
\end{lemma}

\begin{proof}
We use Remark~\ref{r:non-equivariant} and the fact that empty intersections give zero equivariant intersections to adapt the arguments of \cite{chen}. 

From (17) of \cite{chen}, we have 
\begin{align*}
c_{n_{l+1}-n_{l-1}}(A_{l+1}^*) =&
\sum_{j=0}^{n_l-n_{l-1}-1} Q_{n_{l-1}+j+1}[n_{l+1}-n_{l-1}-j-1]c_j(A_l^*) \\
&+ 
\sum_{j=0}^{n_{l+1}-n_l}c_{n_{l+1}-n_l-j}(K)c_{n_l-n_{l-1}+j}(A_l^*)
\end{align*}
where $K=\ker(A_{l+1}^*\rightarrow A_l^*)$.
Consider the equivariant intersection of both sides by $\tilde{\mu}_{w^\circ}$ in $H_T^*(\quot_{\ul{e}_l})$.  
Since $\mu^\circ_{\alpha_{k,l+1}}=c_k(A_{l+1}^*)$, the left side is zero by Remark~\ref{r:non-equivariant} and Lemma~\ref{l:sigma-id}.  For the quot scheme $\quot_\dd$ with $\dd=\ul{e}_l$, note that $d_a=0$ for $a<l$ so that the maps $A_l^*\rightarrow A_a^*$ are all surjective. By Lemma \ref{l:Q-zero}, $c_j(A_l^*)$ does not involve any nonzero $Q$ terms in $H_T^*\quot_{\ul{e}_l}$. Therefore, by Proposition \ref{p:poly-in-Q} for polynomials of $Q$-degree up to $n_{l+1}-n_{l-1}-1$, all terms in the first sum vanish except when $j=0$.  By the empty intersections found in Lemma~11.2 of \cite{chen} and the fact  that $\mu^\circ_{\alpha_{k,l}}=c_k(A_l^*)$, all the terms in the second sum vanish except when $j=0$.  Therefore, 
\[
  (Q_{n_{l-1}+1}[n_{l+1}-n_{l-1}-1] \cdot \tilde{\mu}_{w^\circ})^T_{\ul{e}_l}=-\left(c_{n_{l+1}-n_l}(K)\cdot c_{n_l-n_{l-1}}(A_l^*) \cdot \tilde{\mu}_{w^\circ}\right)^T_{\ul{e}_l}.
\]

The rest of the argument in the non-equivariant case can be adapted as well.  From the equality
\[
  c_{n_{l+1}-n_l}(K) =  \sum_{i=0}^{n_{l+1}-n_{l-1}}c_{n_{l+1}-n_l-i,}(A_{l+1}^*)c_i(-A_l^*),
\]
and Remark~\ref{r:non-equivariant}, we conclude that 
$(Q_{n_{l-1}+1}[n_{l+1}-n_{l-1}-1] \cdot \tilde{\mu}_{w^\circ})^T_{\ul{e}_l}$ is equal to
\begin{align*}
& \left( \sum_{i=0}^{n_{l+1}-n_{l-1}} (-1)^i \mu_{\alpha_{n_{l+1}-n_l-i,l+1}}\cdot\mu_{\beta_{i,l}}\cdot \mu_{\alpha_{n_l-n_{l-1},l}}\cdot  \tilde{\mu}_{w^\circ}\right)^T_{\ul{e}_l} \\
& \, \, =-(-1)^{n_{l+1}-n_l}(\mu_{\beta_{n_{l+1}-n_l,l}}\cdot \mu_{\alpha_{n_l-n_{l-1},l}}\cdot  \tilde{\mu}_{w^\circ})^T_{\ul{e}_l}
\end{align*}
where the equality comes from applying Proposition~6.2 of \cite{cf-partial} and Lemma~11.3 of \cite{chen} to obtain empty intersections for $i<n_{l+1}-n_{l-1}$. Finally, Proposition \ref{p:product-quot} allows us to use the explicit computation of the (equivariant) Gromov-Witten number in (20) of \cite{chen}.
\[(\mu_{\beta_{n_{l+1}-n_l,l}}\cdot \mu_{\alpha_{n_l-n_{l-1},l}}\cdot  \tilde{\mu}_{w^\circ})^T_{\ul{e}_l}
= 1.
\]
Therefore, we obtain the desired result
\[
 (Q_{n_{l-1}+1}[n_{l+1}-n_{l-1}-1] \cdot \tilde{\mu}_{w^\circ})^T_{\ul{e}_l} =(-1)^{n_{l+1}-n_l+1}.  \qedhere
\]

\end{proof}

}

\begin{lemma}
\label{l:single-computation-Q}
Assume that Proposition~\ref{p:poly-in-Q} holds for polynomials
$P(Q,t)$ of $Q$-degree up to $n_{l+1}-n_{l-1}-1$.  Then 
\[
  (Q_{n_{l-1}+1}[n_{l+1}-n_{l-1}-1]\cdot\tilde{\mu}_{w^\circ})^T_{\ul{e}_l} = (-1)^{n_{l+1}-n_l+1}.
\]
\end{lemma}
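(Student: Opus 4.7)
The plan is to adapt the argument of Chen's Lemma~11.4 \cite{chen} to the equivariant setting, using a dimension reduction to convert the equivariant intersection into a non-equivariant one, then running the same inductive cascade of identities. First, I would establish an auxiliary observation: for any class $\gamma \in H_T^*(\hq_{\ul{e}_l})$ of complementary degree to $\tilde\mu_{w^\circ}$, writing $\gamma = \gamma^\circ + \gamma'$ where $\gamma^\circ$ is obtained by setting $t_i=0$ and $\gamma'$ is divisible by positive-degree polynomials in $t$, a dimension count forces $(\gamma' \cdot \tilde\mu_{w^\circ})^T_{\ul{e}_l} = 0$, since $\tilde\mu_{w^\circ}$ already has top codimension. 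Consequently $(\gamma \cdot \tilde\mu_{w^\circ})^T_{\ul{e}_l} = (\gamma^\circ \cdot \tilde\mu_{w^\circ})^T_{\ul{e}_l}$, reducing the computation to intersections computable on the ordinary cohomology of the quot scheme. Under this reduction, $\mu_w^\circ = \Sch_w^{\nn}(c)$, and by Remark~\ref{r:special-usp} we have $\mu^\circ_{\alpha_{k,l+1}} = c_k(A_{l+1}^*)$ and $\mu^\circ_{\beta_{i,l}} = c_i(-A_l^*)$.

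Next I would invoke the universal identity from \cite[(17)]{chen}:
\[
 c_{n_{l+1}-n_{l-1}}(A_{l+1}^*) = \sum_{j=0}^{n_l-n_{l-1}-1} Q_{n_{l-1}+j+1}[n_{l+1}-n_{l-1}-j-1]\,c_j(A_l^*) + \sum_{j=0}^{n_{l+1}-n_l} c_{n_{l+1}-n_l-j}(K)\,c_{n_l-n_{l-1}+j}(A_l^*),
\]
where $K = \ker(A_{l+1}^* \to A_l^*)$, and pair both sides with $\tilde\mu_{w^\circ}$ in $H_T^*\hq_{\ul{e}_l}$. The left-hand side equals $(\mu_{\alpha_{n_{l+1}-n_{l-1},l+1}}\cdot \tilde\mu_{w^\circ})^T_{\ul{e}_l}$ after the dimension reduction, which vanishes by Lemma~\ref{l:sigma-id} since $\ul{e}_l \neq \ul{0}$. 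In the first sum on the right, note that for $\dd = \ul{e}_l$ each map $A_l^* \to A_a^*$ with $a < l$ is surjective (as $d_a = 0$), so by Lemma~\ref{l:Q-zero} the classes $c_j(A_l^*)$ involve no nonzero $Q$ terms; then the inductive hypothesis (Proposition~\ref{p:poly-in-Q} applied to $Q$-polynomials of degree $\leq n_{l+1}-n_{l-1}-1$) together with Lemma~\ref{l:single-Q} kills every term except $j=0$. For the second sum, the empty-intersection statements of \cite[Prop.~6.2]{cf-partial} and \cite[Lemma~11.2]{chen}, transported via the dimension reduction, eliminate all terms with $j>0$.

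What remains is the identity
\[
 \bigl(Q_{n_{l-1}+1}[n_{l+1}-n_{l-1}-1]\cdot \tilde\mu_{w^\circ}\bigr)^T_{\ul{e}_l} = -\bigl(c_{n_{l+1}-n_l}(K)\cdot c_{n_l-n_{l-1}}(A_l^*)\cdot \tilde\mu_{w^\circ}\bigr)^T_{\ul{e}_l}.
\]
I would then expand $c_{n_{l+1}-n_l}(K)$ using the Whitney-type formula $c_{n_{l+1}-n_l}(K) = \sum_i (-1)^i c_{n_{l+1}-n_l-i}(A_{l+1}^*) c_i(-A_l^*)$ from \cite{chen}, translate each factor into $\mu^\circ_{\alpha}$ and $\mu^\circ_{\beta}$ via Remark~\ref{r:special-usp}, and apply the empty-intersection lemmas once more to leave only the $i = n_{l+1}-n_{l-1}$ term, giving $(-1)^{n_{l+1}-n_l+1}(\mu_{\beta_{n_{l+1}-n_l,l}}\cdot \mu_{\alpha_{n_l-n_{l-1},l}}\cdot \tilde\mu_{w^\circ})^T_{\ul{e}_l}$. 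Finally I would invoke Proposition~\ref{p:product-quot} to interpret this number as an equivariant Gromov-Witten invariant, then apply the explicit evaluation in \cite[(20)]{chen} to conclude it equals $1$, yielding $(-1)^{n_{l+1}-n_l+1}$. The main obstacle is the systematic dimension reduction: I must verify that every time I reduce an equivariant product to its non-equivariant shadow against $\tilde\mu_{w^\circ}$, the surviving class indeed has top codimension and the excised terms contribute zero; once this bookkeeping is in place, the rest of the proof is a direct transcription of Chen's cohomological argument.
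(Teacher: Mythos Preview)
Your proposal is correct and rests on the same key observation the paper uses: since $Q_{n_{l-1}+1}[n_{l+1}-n_{l-1}-1]\cdot\tilde\mu_{w^\circ}$ has degree exactly $\dim\quot_{\ul{e}_l}$, the equivariant pushforward agrees with the ordinary one, so the computation reduces to non-equivariant cohomology.  The paper, however, stops there and simply cites \cite[Proposition~11.1]{chen} as a black box, noting only that the hypothesis on Proposition~\ref{p:poly-in-Q} implies the non-equivariant analogue \cite[Proposition~8.1]{chen} needed to invoke Chen's result.  You instead rederive Chen's computation in the equivariant setting step by step, via the identity \cite[(17)]{chen}, Lemma~\ref{l:Q-zero}, the empty-intersection lemmas, and the explicit Gromov--Witten evaluation \cite[(20)]{chen}.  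This is valid and more self-contained, but the paper's shortcut buys considerable brevity at the cost of a heavier citation.

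One small slip: in your final Whitney expansion you say the surviving term is $i=n_{l+1}-n_{l-1}$, but the expression you then write down, $(-1)^{n_{l+1}-n_l+1}(\mu_{\beta_{n_{l+1}-n_l,l}}\cdot\mu_{\alpha_{n_l-n_{l-1},l}}\cdot\tilde\mu_{w^\circ})^T_{\ul{e}_l}$, corresponds to $i=n_{l+1}-n_l$ (so that the $A_{l+1}^*$-factor is $c_0=1$).  The latter is what you want.
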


\begin{proof}
Recall that $( Q_{n_{l-1}+1}[n_{l+1}-n_{l-1}-1]\cdot\tilde{\mu}_{w^\circ} )^T_{\ul{e}_l}$ is defined as the equivariant pushforward $\qpi^T_*(Q_{n_{l-1}+1}[n_{l+1}-n_{l-1}-1]\cdot\tilde{\mu}_{w^\circ})$, for $\qpi^T_* : H_T^*\quot_{\ul{e}_l} \to H_T^*(\pt)$.  Moreover, since the degree of $Q_{n_{l-1}+1}[n_{l+1}-n_{l-1}-1]\cdot\tilde{\mu}_{w^\circ}$ is equal to $n_{l+1}-n_{l-1} + \ell(w^\circ) = \dim \quot_{\ul{e}_l}$, the equivariant pushforward is equal to the non-equivariant one: 
\[
  \qpi^T_*( Q_{n_{l-1}+1}[n_{l+1}-n_{l-1}-1]\cdot\tilde{\mu}_{w^\circ} ) = \qpi_*( \bar{Q}_{n_{l-1}+1}[n_{l+1}-n_{l-1}-1]\cdot \bar{\tilde{\mu}}_{w^\circ} )
\]
in $H_T^0(\pt) = H^0(\pt) = \ZZ$, where for a class $\gamma$ in $H_T^*\quot_{\ul{e}_l}$, we write $\bar\gamma$ for its image in $H^*\quot_{\ul{e}_l}$.  By \cite[Proposition~11.1]{chen}, the right-hand side is $(-1)^{n_{l+1}-n_{l}+1}$, as desired.  (To apply this result of \cite{chen}, note that our Proposition~\ref{p:poly-in-Q} for polynomials $P(Q,t)$ of $Q$-degree up to $n_{l+1}-n_{l-1}-1$ implies the corresponding non-equivariant statement in \cite[Proposition~8.1]{chen}.)
\end{proof}

We are now ready to prove the main results of this section.

\begin{proof}[Proof of Proposition \ref{p:poly-in-Q} and Theorem \ref{t:sigma-usp}] 
We will use induction on the length of $w$, and on the degree of $P(Q,t)$, viewed as a polynomial in $Q_i[j]$ with coefficients in $\ZZ[t]$.
  
We proceed by showing that Theorem~\ref{t:sigma-usp} for $\ell(w)\leq s$ implies Proposition~\ref{p:poly-in-Q} for polynomials $P(Q,t)$ of $Q$-degree at most $s$, and then showing that Proposition~\ref{p:poly-in-Q} for polynomials $P(Q,t)$ of $Q$-degree at most $s$, together with Lemma~\ref{l:single-Q}, imply Theorem~\ref{t:sigma-usp} for permutations $w$ of length at most $s+1$. The base case $s=0$ for Theorem~\ref{t:sigma-usp} holds because $\sigma_{id}=1$ as needed.  
 
First suppose that Theorem~\ref{t:sigma-usp} holds when $\ell(w)\leq s$.  
Let $P(Q,t)$ be a polynomial of $Q$-degree at most $s$. Then its terms involve only $t_i$ and $Q_i[j]$ with $j\leq s-1$ and $i+j\in\nn$.  By Lemma~\ref{l:rewrite-g}, each $Q_i[j]$ that could occur can be written as a polynomial in $t$ and $\Sch^\nn_w(Q,t)$ with $\ell(w)\leq s$.  Therefore a polynomial $P(Q,t)$ in $t$ and $Q_i[j]$ with $j\leq s-1$ can be rewritten as
\[
  P(Q,t)= F(\Sch^\nn_w(Q,t),t)=F(\mu_w,t),
\]
where the second equality holds by Proposition~\ref{p:deg-mu}.  Note that $F$ is a polynomial in $\mu_w$ and $t$ with $w\in \Sn$ and  $\ell(w)\leq s$.

By Corollary~\ref{c:poly-quot}, we obtain the equalities
\[
 \sum_{\ul{d},v} q^{\ul{d}}\,(P(Q,t))\cdot\tilde{\mu}_{v^\vee})^T_{\ul{d}}\, \sigma_v  = \sum_{\ul{d},v} q^{\ul{d}}\,(F(\mu,t)\cdot\tilde{\mu}_{v^\vee})^T_{\ul{d}} \,\sigma_v = F(\sigma,t)
\]
in $QH_T^*(Fl(\nn))$.  Since $F(\sigma,t)$ is a polynomial in $\sigma_w$ with $\ell(w)\leq s$, we can apply our hypothesis to obtain $F(\sigma,t) = F(\sigma_w^q(x,t),t) = P^q(x,t)$
so that Proposition~\ref{p:poly-in-Q} holds for polynomials $P(Q,t)$ with $Q$-degree at most $s$.

Now suppose that Proposition~\ref{p:poly-in-Q} holds for polynomials of $Q$-degree at most $s$.  Let $w\in\Sn$ with $\ell(w)=s+1$. Consider the polynomial $\Sch_w(Q,t)$.  By degree considerations, since the degree of $Q_i[j]$ is $j+1$, each $Q_i[s]$ can only appear linearly, and we can write $\Sch_w(Q,t) = P(Q,t) + \sum a_i Q_i[s]$, where $P(Q,t)$ is a polynomial of $Q$-degree at most $s$, and $a_i\in\ZZ$.  Therefore
\begin{align*}
\sigma_w &= \sum_{\ul{d},v} q^{\ul{d}}\,(\Sch_w(Q,t)\cdot\tilde{\mu}_{v^\vee})_{\ul{d}} \,\sigma_v \\
  &= \sum_{\ul{d},v} q^{\ul{d}}\, (P(Q,t))\cdot\tilde{\mu}_{v^\vee})_{\ul{d}} \,\sigma_v + \sum_i a_i\sum_{\ul{d},v} q^{\ul{d}}\,(Q_i[s]\cdot\tilde{\mu}_{v^\vee})_{\ul{d}}\, \sigma_v \\
  &= P^q(x,t) +  \sum a_i (-1)^{n_{l+1}-n_l+1}q_l \\
  &= \Sch_w^q(x,t),
\end{align*}
where the final sum is over $(i,s)=(n_{l-1}+1,n_{l+1}-n_{l-1}-1)$.  The last line follows from Proposition~\ref{p:poly-in-Q} for polynomials of $Q$-degree at most $s$, Lemma~\ref{l:single-Q}, and Lemma~\ref{l:single-computation-Q}, which holds since $s=n_{l+1}-n_{l-1}-1$.  We have shown that Theorem~\ref{t:sigma-usp} holds for permutations $w\in\Sn$ of length at most $s+1$.

This concludes the proof of Theorem~\ref{t:sigma-usp} and Proposition~\ref{p:poly-in-Q}, and therefore the proofs of the equivariant quantum Giambelli formula and presentation of the equivariant quantum cohomology ring for partial flag varieties.
\end{proof}

\section{Further properties}\label{s:stability}

We conclude with some brief remarks about the stability properties of the equivariant quantum Schubert polynomials.  First, the universal Schubert polynomials are independent of $n$, from the definition (see \S\ref{s:univ-sch} and \cite[\S2]{fulton}):
\begin{lemma}\label{l:univ-stable}
Consider $w\in S_n$.  The polynomial $\Sch_w(c,d)$ is the same when $w$ is considered as a permutation in $S_{n'}$, for $n'>n$, using the standard embedding of symmetric groups.
\end{lemma}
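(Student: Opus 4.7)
The plan is to prove the lemma in two stages: first for the single universal Schubert polynomial $\Sch_w(c)$, and then deduce the double case from the defining identity~\eqref{e:univ-double-def}. Both stages rest on the observation that reduced-expression data for $w\in S_n$ cannot escape $S_n$.

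For the single case, I would invoke the classical fact that $\Sch_w(x)$ is stable under the standard embedding $S_n\hookrightarrow S_{n'}$: as a polynomial it involves only $x_1,\ldots,x_{n-1}$ and is independent of $n'$. In the unique expansion
\[
\Sch_w(x) = \sum a_{k_1\ldots k_{n'-1}}\, e_{k_1}(1)\cdots e_{k_{n'-1}}(n'-1)
\]
with $k_p\leq p$ recalled in the excerpt, the coefficient $a_{k_1\ldots k_{n'-1}}$ must vanish as soon as some $k_p>0$ with $p\geq n$: otherwise the shorter $S_n$-expansion, trivially extended by $k_n=\cdots=k_{n'-1}=0$, would give a second valid expansion, contradicting uniqueness. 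Substituting $e_k(l)\mapsto c_k(l)$ then transports this stability to $\Sch_w(c)$.

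For the double case I would use the defining formula
\[
\Sch_w(c,d)=\sum_{\substack{v^{-1}u=w \\ \ell(u)+\ell(v)=\ell(w)}}(-1)^{\ell(v)}\,\Sch_u(c)\,\Sch_v(d),
\]
and show that every pair $(u,v)$ in the ambient group $S_{n'}$ contributing to this sum already lies in $S_n$; the sum then reduces termwise to the corresponding sum in $S_n$ by the single-polynomial stability. The length condition $\ell(v^{-1})+\ell(u)=\ell(v^{-1}u)=\ell(w)$ is equivalent to saying that the concatenation of reduced expressions for $v^{-1}$ and $u$ is a reduced expression for $w$. Any simple transposition $s_i$ appearing in a reduced expression of $w$ must satisfy $s_i\leq w$ in Bruhat order, equivalently there exist $a\leq i<b$ with $w(a)>w(b)$. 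For $i\geq n$ and $w\in S_n$, however, any $b>i\geq n$ has $w(b)=b$ while $w(a)\leq n\leq i<b$; no such inversion exists, so $s_i\not\leq w$. Hence the reduced expressions for $u$ and $v^{-1}$ use only transpositions of index $<n$, forcing $u,v\in S_n$.

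The Bruhat-order constraint on the contributing pairs $(u,v)$ is the only substantive step; everything else is formal bookkeeping, so I anticipate no real obstacle.
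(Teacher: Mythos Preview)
Your argument is correct. The paper does not actually supply a proof of this lemma: it simply asserts that stability holds ``from the definition'' and refers to \S\ref{s:univ-sch} and \cite[\S2]{fulton}. Your two-step argument---first reducing $\Sch_w(c)$ to the classical stability of $\Sch_w(x)$ via uniqueness of the elementary-symmetric expansion, then handling $\Sch_w(c,d)$ by showing every contributing pair $(u,v)$ in \eqref{e:univ-double-def} already lies in $S_n$---is exactly the unpacking one would expect, and is sound. The key point in the second step (that the support of $w\in S_n$, viewed in $S_{n'}$, is contained in $\{s_1,\ldots,s_{n-1}\}$, so any reduced factorization $w=v^{-1}u$ forces $u,v\in S_n$) is standard and your justification via the subword criterion is fine.
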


As an immediate consequence, we have a weak stability property of equivariant quantum Schubert polynomials for partial flags.  Let $\nn = (n_1<\cdots<n_m<n_{m+1}=n)$, and for any $n'>n$, let $\nn'=(n_1<\cdots<n_m<n_{m+1}<n_{m+2}=n')$.  The standard embedding $S_n \subset S_{n'}$ leads to a canonical inclusion $S^\nn \subset S^{\nn'}$.  (Concretely, given $w\in S^{\nn}$, the corresponding minimal length representative in $S^{\nn'}$ is given by appending $n_{m+1}+1,n_{m+1}+2,\ldots,n_{m+2}=n'$ to $w$.)  Then $\Sch^q_w(\sigma,t)$ is the same whether $w$ is considered in $S^{\nn}$ or $S^{\nn'}$.

For the remainder of this section, we will focus on the complete flag case.  Consider the standard embeddings of symmetric groups $S_n \subset S_{n+1} \subset \cdots \subset S_\infty$, and for each $n$, let $I^q_T(n)$ be the ideal $(e^q_1(n)-e_1(t),\ldots,e^q_n(n)-e_n(t))$ from Corollary~\ref{c:presentation}.  In analogy with \cite[Theorem~10.1]{fgp}, the polynomials for complete flags may be characterized as follows:

\begin{proposition}\label{p:stability}
For $w\in S_n$, the equivariant quantum Schubert polynomial $\Sch^q_w(x,t)$ is the unique polynomial in $\ZZ[t_1,\ldots,t_n;q_1,\ldots,q_{n-1};x_1,\ldots,x_n]$ with the property that, for all $N\geq n$, $\Sch^q_w(x,t)$ represents the Schubert class $\sigma_w$ in the ring
\[
 QH_T^*(\Fl(n)) = \ZZ[t_1,\ldots,t_N;q_1,\ldots,q_{N-1};x_1,\ldots,x_N] / I_T^q(N).
\]
\end{proposition}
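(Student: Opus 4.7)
The plan is to establish existence and uniqueness separately, with the latter being the substantive part.

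For \emph{existence}, given $w\in S_n$ and any $N\ge n$, Lemma~\ref{l:univ-stable} implies that $\Sch_w(g,h)$ as a polynomial in the universal variables is the same whether $w$ is viewed in $S_n$ or in $S_N$; in particular only the $g_i[j]$ and $h_i[j]$ with small indices appear. Applying the complete-flag specialization~\eqref{e:spec1} for $\Fl(N)$ to this common polynomial produces $\Sch^q_w(x,t)\in\ZZ[t_1,\ldots,t_n;\,q_1,\ldots,q_{n-1};\,x_1,\ldots,x_n]$ independently of $N$, and Theorem~\ref{t:main} applied to $\Fl(N)$ then gives $\Sch^q_w(x,t)=\sigma_w$ in $QH_T^*(\Fl(N))$ for every $N\ge n$.

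For \emph{uniqueness}, suppose $P$ is another polynomial with the stated property and set $D:=P-\Sch^q_w(x,t)$, so $D\in I^q_T(N)$ for every $N\ge n$; the goal is $D=0$. The plan is to use the $(q)$-adic filtration on $\ZZ[t,q,x]$. Corollary~\ref{c:presentation} together with the Schubert basis exhibits $QH_T^*(\Fl(N))$ as a free $\Lambda_T[q]$-module, so by flatness $\mathrm{gr}_{(q)}QH_T^*(\Fl(N))\cong H_T^*(\Fl(N))[\bar q_1,\ldots,\bar q_{N-1}]$, and comparing the associated gradeds of the presenting exact sequences
\[
  0\to I^q_T(N)\to\ZZ[t,q,x]\to QH_T^*(\Fl(N))\to 0, \qquad 0\to I_T(N)\to\ZZ[t,x]\to H_T^*(\Fl(N))\to 0
\]
identifies $\mathrm{gr}_{(q)}I^q_T(N)=I_T(N)\cdot\ZZ[t,x,\bar q]$. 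Writing $D=\sum_\alpha q^\alpha D_\alpha(t,x)$ with $D_\alpha\in\ZZ[t_1,\ldots,t_n;x_1,\ldots,x_n]$, and letting $d$ be minimal with some $D_\alpha\neq 0$, the initial form $\sum_{|\alpha|=d}\bar q^\alpha D_\alpha$ lies in $I_T(N)\cdot\ZZ[t,x,\bar q]=\bigl\{\sum_\alpha\bar q^\alpha R_\alpha:R_\alpha\in I_T(N)\bigr\}$, so separating $\bar q^\alpha$-coefficients forces $D_\alpha\in I_T(N)$ for every such $\alpha$ and every $N\ge n$.

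To conclude I would apply equivariant localization: for any $w\in S_N$, restriction to the $T$-fixed point $wE_\bullet$ sends $x_i\mapsto t_{w(i)}$, so $D_\alpha\in I_T(N)$ gives $D_\alpha(t_1,\ldots,t_n;t_{w(1)},\ldots,t_{w(n)})=0$. For $N\ge 2n$ one can take $w$ with $w(1),\ldots,w(n)\in\{n+1,\ldots,N\}$ arbitrary, so the $n$ entries $t_{w(i)}$ are algebraically independent of $t_1,\ldots,t_n$, forcing $D_\alpha\equiv 0$ as a polynomial. This contradicts the choice of $d$, hence $D=0$.

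The main obstacle is the associated-graded identification $\mathrm{gr}_{(q)}I^q_T(N)=I_T(N)\cdot\ZZ[t,x,\bar q]$; once one has freeness of $QH_T^*(\Fl(N))$ over $\Lambda_T[q]$ (which Corollary~\ref{c:presentation} supplies), this is routine, and it is the step that cleanly reduces the quantum stability question to its non-quantum counterpart, settled by equivariant localization on $\Fl(N)$.
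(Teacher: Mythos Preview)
Your proof is correct; the existence half matches the paper's, but your uniqueness argument is genuinely different.

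The paper dispatches uniqueness in one line by invoking Lemma~\ref{l:basis}, whose proof rests on the quantum straightening identity (Lemma~\ref{l:straighten}, taken from \cite{fgp}).  The idea there is that the $\Sch^q_w(x,t)$, as $w$ ranges over $S_\infty$, form a $\ZZ[t,q]$-basis of $\ZZ[t,q,x]$; writing $D=\sum_u a_u(t,q)\,\Sch^q_u$, one then takes $N$ large enough that every $u$ occurring lies in $S_N$, and linear independence of the classes $\sigma_u$ in $QH_T^*(\Fl(N))$ forces all $a_u=0$.

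Your route avoids the straightening lemma entirely: you use freeness of $QH_T^*(\Fl(N))$ over $\Lambda_T[q]$ (which is immediate from the module definition in \S\ref{ss:eqcoh}, independently of Corollary~\ref{c:presentation}) to peel off the lowest $q$-layer of $D$ and land in the classical ideal $I_T(N)$, and then finish with restriction to fixed points.  Two remarks: (i) your identification of $\mathrm{gr}_{(q)}I^q_T(N)$ is correct, but the clean way to say it is that freeness over $\ZZ[q]$ makes each graded piece $(q)^d(R/I)/(q)^{d+1}(R/I)$ isomorphic to $\bigoplus_{|\alpha|=d}H_T^*(\Fl(N))\cdot\bar q^\alpha$, so the vanishing of $D$ in $R/I$ forces each $D_\alpha$ to die in $H_T^*(\Fl(N))$; (ii) the restriction formula $x_i|_{p(v)}=t_{v(i)}$ holds in the paper's conventions, since $\ker(Q_i\to Q_{i-1})|_{p(v)}$ is spanned by $e_{v(i)}$.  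Your approach has the virtue of using only structural facts (flatness and injectivity of localization) rather than the type-$A$ specific straightening, so it would transport more readily to other Lie types; the paper's approach, on the other hand, yields the stronger Lemma~\ref{l:basis} as a byproduct, which is used again for Corollary~\ref{c:multiplication}.
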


\begin{proof}
As remarked above, in Lemma~\ref{l:univ-stable}, the polynomial $\Sch^q_w(x,t)$ is independent of $n$, so long as $w\in S_n$.
That $\Sch^q_w(x,t)$ represents $\sigma_w$ is the content of Theorem~\ref{t:main}, so the only question is uniqueness.  This follows from Lemma~\ref{l:basis} below.
\end{proof}

As polynomials in $x$ and $q$, our $e^q_k(l)$ are the same as the quantum elementary symmetric polynomials considered in \cite{fgp} (and denoted $E_k^l$ there); the following ``quantum straightening lemma'' therefore applies without change.
\begin{lemma}[{\cite[Lemma~3.5]{fgp}}]\label{l:straighten}
For $0\leq j,k\leq l$, we have
\begin{multline*}
  e_j^q(l)\, e_{k+1}^q(l+1) + e_{j+1}^q(l)\, e_k^q(l) + q_l\, e_{j-1}^q(l-1)\, e_k^q(l)\qquad \qquad \quad \\
 \qquad \qquad \qquad = e_k^q(l)\, e_{j+1}^q(l+1) + e_{k+1}^q(l)\, e_j^q(l) + q_l\, e_{k-1}^q(l-1)\, e_j^q(l).  \qquad \quad {\qed}
\end{multline*}
\end{lemma}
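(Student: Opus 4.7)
The plan is to reduce this identity to Lemma 3.5 of \cite{fgp}, which is already stated and proved for the Fomin--Gelfand--Postnikov quantum elementary symmetric polynomials. The key observation is that although $e_k^q(l)$ is defined for us by the specialization \eqref{e:spec1} of the universal polynomial $E_k^l(g)$, for the complete flag variety this specialization never involves the variables $t_i$, and reduces to the familiar FGP definition.

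First I would check that $e^q_k(l)\in\ZZ[x,q]$ and that it agrees with the FGP polynomial $E_k^l$. In the complete flag case $\nn=\{1,\ldots,n-1\}$, we have $n_i=i$, so the sign $(-1)^{n_i-n_{i-1}+1}=(-1)^{2}=+1$, and the only $g$-variables sent to something nonzero are $g_i[0]\mapsto x_i$ and $g_i[1]\mapsto q_i$. Substituting into the inductive relation \eqref{e:Ekl-inductive} yields
\[
 e_k^q(l) = e_k^q(l-1) + x_l\, e_{k-1}^q(l-1) + q_{l-1}\, e_{k-2}^q(l-2),
\]
which is exactly the recursion defining $E_k^l$ in \cite{fgp}. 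A short induction on $l$ then identifies $e^q_k(l)$ with $E_k^l$.

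Second, since both sides of the claimed identity lie in $\ZZ[x,q]$, the presence of $t$ in our ambient coefficient ring is irrelevant, and the identity is literally the statement of \cite[Lemma 3.5]{fgp}. Thus no independent argument is required; we inherit the FGP proof (a one-line induction from the three-term recursion above).

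The only obstacle is bookkeeping at the boundary: one must confirm that the edge cases ($j=0$, $k=0$, or $j=l$) use the same conventions $e_{-1}^q(\cdot)=0$ and $e_k^q(l)=0$ for $k>l$ as \cite{fgp}. Both are built into \eqref{e:Ekl-inductive} and the definition $E_k^l(g)=0$ outside $0\le k\le l$, so this is immediate. I expect no genuine difficulty in the proof.
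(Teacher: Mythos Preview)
Your proposal is correct and matches the paper's approach exactly: the paper states just before this lemma that ``as polynomials in $x$ and $q$, our $e^q_k(l)$ are the same as the quantum elementary symmetric polynomials considered in \cite{fgp},'' and then simply cites \cite[Lemma~3.5]{fgp} with no further argument. Your check that the specialization \eqref{e:spec1} in the complete flag case reproduces the FGP recursion is the content the paper leaves implicit.
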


As in \cite{fgp}, this straightening relation means that any monomial in the quantum elementary symmetric polynomials can be written as a $\ZZ[q]$-linear combination of ``standard'' monomials $e^q_{i_1}(1)\cdot e^q_{i_2}(2)\cdots e^q_{i_n}(n)$, with $0\leq i_l\leq l$.

\begin{lemma}\label{l:basis}
For each $n$, the following $\ZZ[t,q]$-submodules of $\ZZ[t,q,x]$ are the same:
\begin{itemize}
\item the submodule spanned by equivariant quantum Schubert polynomials $\Sch^q_w(x,t)$, for $w\in S_n$;

\item the submodule spanned by ordinary (single) quantum Schubert polynomials $\Sch^q_w(x)$, for $w\in S_n$;

\item the submodule spanned by the monomials $x_1^{a_1}\cdots x_n^{a_n}$ with $a_i \leq n-i$.
\end{itemize}
Moreover, in each case the submodule is free and the indicated spanning set is a basis. 
\end{lemma}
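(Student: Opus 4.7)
The plan is to prove the three submodules coincide as $\ZZ[t,q]$-submodules of $\ZZ[t,q,x]$, and that each indicated spanning set is a free basis, by using the staircase monomials as a common reference and chaining two unitriangular changes of basis.

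The first step is immediate: the staircase monomials $x_1^{a_1}\cdots x_n^{a_n}$ with $0\leq a_i\leq n-i$ are distinct monomials in $\ZZ[t,q,x]$, hence $\ZZ[t,q]$-linearly independent, and the submodule they span is free of rank $n!$ on this basis.

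The second step handles the single quantum Schubert polynomials. By the classical theorem of Lascoux--Sch\"utzenberger \cite{lsch}, the ordinary Schubert polynomials $\{\Sch_w(x) : w \in S_n\}$ form a $\ZZ$-basis for the staircase submodule of $\ZZ[x]$. By \cite{fgp}, each $\Sch^q_w(x)$ lies in $\ZZ[q,x]$, satisfies $\Sch^q_w(x)\equiv \Sch_w(x)\pmod{q}$, and (after applying quantum straightening from Lemma~\ref{l:straighten} to the products $e^q_{k_1}(1)\cdots e^q_{k_n}(n)$ in its definition) can be rewritten as a $\ZZ[q]$-linear combination of staircase monomials. The $n!\times n!$ change-of-basis matrix from $\{\Sch^q_w(x)\}$ to the staircase monomials therefore reduces modulo $q$ to the invertible (over $\ZZ$) classical change-of-basis matrix, so it is itself invertible over $\ZZ[q]$. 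Tensoring with $\ZZ[t]$ gives a $\ZZ[t,q]$-basis for the staircase submodule.

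The third step passes to equivariant quantum Schubert polynomials via the universal double Schubert polynomial expansion \eqref{e:univ-double-def}. Under the specialization \eqref{e:spec1}, the $h$-part sends $\Sch_v(d)$ to the classical Schubert polynomial $\Sch_v(t)$ in the $t$-variables (since $d_k(l)\mapsto e_k(t_1,\ldots,t_l)$), and the $g$-part sends $\Sch_u(c)$ to the single quantum Schubert polynomial $\Sch^q_u(x)$. The resulting expansion
\[
  \Sch^q_w(x,t) = \sum_{\substack{u,v\in S_n \\ v^{-1}u = w,\,\ell(u)+\ell(v)=\ell(w)}} (-1)^{\ell(v)}\,\Sch^q_u(x)\,\Sch_v(t)
\]
contributes $\Sch^q_w(x)$ with coefficient $1$ (from $v=\mathrm{id}$, which forces $u=w$), while every other term has $\ell(u)<\ell(w)$ with coefficient in $\ZZ[t,q]$. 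Ordering $S_n$ by length exhibits a unitriangular change of basis from $\{\Sch^q_w(x,t)\}$ to $\{\Sch^q_w(x)\}$ over $\ZZ[t,q]$, so the equivariant quantum Schubert polynomials likewise form a $\ZZ[t,q]$-basis for the staircase submodule.

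The main obstacle is verifying that each single quantum Schubert polynomial lies in the staircase submodule as a polynomial, not merely modulo the quantum ideal; once this is extracted from \cite{fgp} via the quantum straightening lemma, the two modular reductions and the triangularity arguments complete the proof mechanically.
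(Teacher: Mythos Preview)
Your approach matches the paper's: use the expansion \eqref{e:univ-double-def} to get a unitriangular transition from $\{\Sch^q_w(x,t)\}$ to $\{\Sch^q_w(x)\}$ over $\ZZ[t,q]$, and defer to \cite{fgp} for the equality of the second and third submodules. The paper simply cites \cite[Proposition~3.6]{fgp} for the latter; your sketch of that step is where the slips are.

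Two points to fix. First, the quantum straightening lemma is not the tool that puts $\Sch^q_w(x)$ into the staircase span. Straightening rewrites arbitrary products of quantum elementary polynomials as $\ZZ[q]$-combinations of \emph{standard} products $e^q_{i_1}(1)\cdots e^q_{i_n}(n)$ with $i_l\leq l$; but the defining expression for $\Sch^q_w(x)$ is already of this form, so nothing is gained. What you actually need is the separate fact (\cite[Proposition~3.4]{fgp}) that these standard products already lie in, and form a basis of, the $\ZZ[q]$-span of staircase monomials in $x$.

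Second, the inference ``reduces modulo $q$ to an invertible matrix, so is invertible over $\ZZ[q]$'' is false in general (consider the $1\times 1$ matrix $(1+q_1)$). What saves you here is homogeneity: with $\deg x_i=1$ and $\deg q_i=2$, both $\{\Sch^q_w(x)\}$ and $\{x^a\}$ are homogeneous, the multiset of degrees $\{\ell(w)\}$ equals $\{|a|\}$, and the change-of-basis matrix is block lower-triangular by degree with diagonal blocks lying in $\ZZ$. Those diagonal blocks coincide with the classical (degree-by-degree) transition matrices between $\{\Sch_w(x)\}$ and $\{x^a\}$, each invertible over $\ZZ$, so the determinant is $\pm 1$. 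You should state this grading argument explicitly rather than the bare mod-$q$ claim.
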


\begin{proof}
The argument is the same as in \cite[\S3]{fgp}. 
From the definition of double universal Schubert polynomials \eqref{e:univ-double-def}, we have
\[
  \Sch^q_w(x,t) = \Sch_w^q(x) + R,
\]
where $R$ lies in the $\ZZ[t,q]$-submodule spanned by Schubert polynomials $\Sch^q_u(x)$ with $u<w$.  It follows that the equivariant quantum Schubert polynomials lie in the span of the ordinary quantum Schubert polynomials, and since the transition matrix between these two bases is uni-triangular, the submodules are the same.  The equality of the last two spans follows directly from \cite[Proposition~3.6]{fgp}.
\end{proof}

Taking $n$ to infinity, it follows from Lemma~\ref{l:basis} that the equivariant quantum Schubert polynomials $\Sch_w^q(x,t)$ form a $\ZZ[t,q]$-basis for $\ZZ[t,q,x]$, as $w$ runs over $S_\infty$, and all three sets of variables are infinite.  This means that any product $\Sch_u^q \cdot \Sch_v^q$ can be expanded as a $\ZZ[t,q]$-linear combination of equivariant quantum Schubert polynomials.  We conclude with an observation about these products.

%

It will be convenient to have notation for the polynomial ring: let
\[
  R_N = \ZZ[t_1,\ldots,t_N;q_1,\ldots,q_{N-1};x_1,\ldots,x_N].
\]

\begin{lemma}\label{l:ideal}
For $n<N$, consider the standard embedding $S_n \subset S_N$.  Let $w\in S_N$, so $\Sch_w^q=\Sch_w^q(x,t)$ is a polynomial in $R_N$.  Let $J$ be the ideal
\[
  J = I_T^q(n)\cdot R_N + (t_{n+1},\ldots,t_N)\cdot R_N + (q_n,\ldots,q_{N-1})\cdot R_N + (x_{n+1},\ldots,x_N)\cdot R_N.
\]
If $w\not\in S_n$, the polynomial $\Sch_w^q$ lies in $J$.  Equivalently, $\Sch_w^q$ maps to zero in $QH_T^*(\Fl(n)) \isom R_N/J$.
\end{lemma}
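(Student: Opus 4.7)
The plan is to reduce the lemma to a statement about a ring homomorphism. First I would check that $I_T^q(N) \subset J$, so the quotient $R_N \twoheadrightarrow R_N/J$ factors through $QH_T^*(\Fl(N)) = R_N/I_T^q(N)$ and gives a well-defined ring homomorphism
\[
\bar\pi\colon QH_T^*(\Fl(N)) \longrightarrow R_N/J = R_n/I_T^q(n) = QH_T^*(\Fl(n)).
\]
The inclusion $I_T^q(N) \subset J$ is checked generator by generator: the composite of the $\Fl(N)$-specialization with the projection $\psi\colon R_N \to R_n$ kills every $g_i[j]$ with $i > n$ or with $j = 1$, $i \geq n$, so the recursion \eqref{e:Ekl-inductive} gives $E^l_k(g) \mapsto e_k^q(\min(l,n))$ after composition; a parallel computation for $e_k(t)$ then shows $e_k^q(N) - e_k(t) \mapsto e_k^q(n) - e_k(t_1,\ldots,t_n) \in I_T^q(n)$ when $k \leq n$, and $\mapsto 0$ when $k > n$. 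By Theorem~\ref{t:main}, the lemma becomes the claim that $\bar\pi(\sigma_w) = 0$ for every $w \in S_N \setminus S_n$.

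I would establish this claim by induction on $\ell(w)$. The base observation is $\bar\pi(\sigma_{s_i}) = 0$ whenever $i \geq n$: the divisor class $\sigma_{s_i} = x_1 + \cdots + x_i - (t_1 + \cdots + t_i)$ maps under $\bar\pi$ to $(x_1 + \cdots + x_n) - (t_1 + \cdots + t_n) = e_1^q(n) - e_1(t) \in I_T^q(n)$. For general $w \in S_N \setminus S_n$, every reduced word for $w$ contains some $s_i$ with $i \geq n$, so I may factor $w = s_i\, w'$ reducedly with $\ell(w') = \ell(w) - 1$ and $i \geq n$. Applying the equivariant quantum Chevalley--Monk formula from \cite{mihalcea-positivity} to the product $\sigma_{s_i} \qtp \sigma_{w'}$ expresses it as $\sigma_w$ plus correction terms $\sigma_u$ (with $u$ differing from $w$ by a reflection, possibly of equal length) together with quantum corrections $q^d \sigma_u$. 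Since $\sigma_{s_i} \in \ker(\bar\pi)$, the entire product lies in $\ker(\bar\pi)$, and the correction terms are eliminated either by induction on a refined invariant (length, then Bruhat order) or by the vanishing $\bar\pi(q_k) = 0$ for $k \geq n$.

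The main obstacle is making this inductive bookkeeping completely clean, since Chevalley--Monk outputs terms of equal length to $\sigma_w$ and quantum corrections potentially involving $u \in S_n$; one must verify that such quantum terms always come paired with $q^d$ having some $d_i \neq 0$ for $i \geq n$. A cleaner alternative, bypassing Chevalley--Monk entirely, proceeds by basis expansion: using Lemma~\ref{l:basis} write $\psi(\Sch_w^q) \equiv \sum_{u \in S_n} c_u(t, q)\, \Sch_u^q \pmod{I_T^q(n)}$, and using Lemma~\ref{l:rewrite-g} lift $\psi(\Sch_w^q)$ to a polynomial $P(Q, t)$ in the quot-scheme classes for $\Fl(n)$. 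Proposition~\ref{p:poly-in-Q} then identifies each $c_u$ with the equivariant Gromov--Witten integral $(P(Q, t) \cdot \tilde\mu_{u^\vee})_\dd^T$, and one argues via the moving lemmas of Section~\ref{s:moving} that these integrals all vanish because $w \notin S_n$ forces the relevant intersections to be empty.
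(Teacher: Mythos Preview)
Your Chevalley--Monk induction has a genuine gap at the factorization step.  You assert that because every reduced word for $w$ contains some $s_i$ with $i\geq n$, one may write $w = s_i w'$ reducedly with $i\geq n$ at the \emph{left} end.  This is false: take $N=3$, $n=2$, and $w = s_1 s_2 = 231$.  Then $w\notin S_2$, but the unique left descent of $w$ is at $i=1$, so no such factorization exists.  (The symmetric claim with $w = w' s_i$ fails too; take $w=312$.)  Without this, the inductive scheme does not get started, and your proposed refinement ``length, then Bruhat order'' cannot help, since the problematic term $\sigma_w$ is the one you are trying to isolate.

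Your alternative is headed in the right direction and is essentially what the paper does, but the paper's execution is cleaner and avoids the vague ``lift via Lemma~\ref{l:rewrite-g}'' step.  The paper works directly on the quot scheme $\hq_{\dd}$ for $\Fl(n)$: setting the extra variables $t_j$, $Q_j[0]$, $Q_k[1]$ (for $j>n$, $k\geq n$) to zero, one has $\Sch_w(Q,t)=\mu_w$ by Proposition~\ref{p:deg-mu}, where $\mu_w$ is still a well-defined degeneracy-locus class even though $w\in S_N$.  The vanishing $(\mu_w\cdot\tilde\mu_{v^\vee})^T_\dd = 0$ for all $v\in S_n$ and all $\dd$ then follows exactly as in the proof of Lemma~\ref{l:sigma-id}: dimensional transversality plus the $\CC^*$-action on $\PP^1$ force the intersection to be empty unless $\dd=\mathbf{0}$ and $w=v$, which is impossible since $w\notin S_n$.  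Proposition~\ref{p:poly-in-Q} then finishes.  No Chevalley--Monk, no inductive bookkeeping, and the ring-homomorphism reformulation (while correct) is unnecessary.
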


\begin{proof}
Consider a quot scheme $\quot_\dd$ compactifying maps to $\Fl(n)$.  For any $u \in S_N$ and $v \in S_n$, the proof of Lemma~\ref{l:sigma-id} shows that $(\mu_u\cdot \tilde\mu_{v^\vee})^T_\dd = 0$ unless $\dd=\mathbf{0}$ and $u=v$.  Since $w\not\in S_n$, we have
\[
 (\Sch_w(Q,t)\cdot \tilde\mu_{v^\vee})^T_\dd = (\mu_w\cdot \tilde\mu_{v^\vee})^T_\dd = 0
\]
for all $\dd$, where for $i,j >n$ and $k>n-1$, we set the extra variables $t_i=Q_j[0]=Q_k[1]=0$ in $\Sch_w(Q,t)$.  The lemma now follows from Proposition~\ref{p:poly-in-Q}.
\end{proof}

An immediate consequence is that the equivariant quantum Schubert polynomials multiply like Schubert classes in $QH_T^*(\Fl(n))$.

\begin{corollary}\label{c:multiplication}
Given permutations $u,v\in S_n$, expand the product of equivariant quantum Schubert polynomials as
\[
  \Sch^q_u(x,t) \cdot \Sch^q_v(x,t) = \sum_w a_w \, \Sch^q_w(x,t),
\]
with $w\in S_\infty$ and $a_w\in \ZZ[t,q]$.  Then the coefficient of $\qq^\dd$ in $a_w$ is equal to $c_{u,v}^{w,\dd}$ when $w\in S_n$.

In other words, the equivariant quantum product $\sigma_u \circ \sigma_v$ (in $QH_T^*(\Fl(n))$) is equal to the product of the polynomials $\Sch^q_u$ and $\Sch^q_v$, after discarding the terms $a_w \Sch^q_w$ for $w\not\in S_n$. \qed
\end{corollary}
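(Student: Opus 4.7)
The plan is to exploit the basis statement for $\Sch^q_w$ over $\ZZ[t,\qq]$ recorded in the paragraph following Lemma~\ref{l:basis} (namely, that $\{\Sch^q_w(x,t)\}_{w\in S_\infty}$ is a $\ZZ[t,\qq]$-basis for $\ZZ[t,\qq,x]$) and then reduce the resulting polynomial identity modulo the ideal presenting $QH_T^*(\Fl(n))$. Explicitly, write $\Sch^q_u(x,t)\cdot\Sch^q_v(x,t) = \sum_w a_w\,\Sch^q_w(x,t)$ in $\ZZ[t,\qq,x]$, with only finitely many $a_w\in\ZZ[t,\qq]$ nonzero, and choose $N\geq n$ large enough that every such $w$ lies in $S_N$; then the entire identity takes place in $R_N$.

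Next I would reduce this identity modulo the ideal $J$ of Lemma~\ref{l:ideal}, using the isomorphism $R_N/J\cong QH_T^*(\Fl(n))$ recorded there. By Theorem~\ref{t:main}, $\Sch^q_u\equiv \scl_u$ and $\Sch^q_v\equiv \scl_v$ modulo $J$, and the quotient multiplication is precisely $\qtp$, so the left-hand side becomes $\scl_u\qtp\scl_v$. On the right-hand side, Theorem~\ref{t:main} gives $\Sch^q_w\equiv \scl_w$ for $w\in S_n$, while Lemma~\ref{l:ideal} gives $\Sch^q_w\equiv 0$ for $w\in S_N\setminus S_n$. Hence
\[
  \scl_u\qtp\scl_v = \sum_{w\in S_n} a_w\,\scl_w \qquad \text{in } QH_T^*(\Fl(n)).
\]

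Finally I would compare this with the definition $\scl_u\qtp\scl_v = \sum_{w\in S_n,\,\dd}\qq^\dd\,c_{u,v}^{w,\dd}(t)\,\scl_w$. Since $\{\scl_w\}_{w\in S_n}$ is a $\Lambda_T[\qq]$-basis of $QH_T^*(\Fl(n))$, matching coefficients of $\scl_w$ gives $a_w = \sum_\dd \qq^\dd\,c_{u,v}^{w,\dd}(t)$ for each $w\in S_n$, i.e., the coefficient of $\qq^\dd$ in $a_w$ equals $c_{u,v}^{w,\dd}(t)$, as claimed. The second formulation in the corollary is a direct reformulation of this equality. There is no real obstacle here: all the substantive work has been absorbed into Theorem~\ref{t:main}, Lemma~\ref{l:basis}, and especially Lemma~\ref{l:ideal}, whose vanishing statement for $w\notin S_n$ is what makes the polynomial product compute the quantum product literally (not merely modulo relations beyond discarding the out-of-range Schubert polynomials).
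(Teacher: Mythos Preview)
Your proof is correct and matches the paper's intended argument: the corollary is recorded with a bare \qed\ immediately after Lemma~\ref{l:ideal}, and your write-up simply spells out that immediate deduction (basis expansion, reduction modulo $J$, then Theorem~\ref{t:main} and Lemma~\ref{l:ideal} to identify the images, and finally comparison of coefficients in the $\Lambda_T[\qq]$-basis $\{\scl_w\}$).
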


For example, as polynomials in $\ZZ[t,q,x]$, we have
\begin{align*}
\Sch_{231}^q \cdot \Sch_{231}^q &= q_1(t_2-t_1)\,\Sch_{21}^q + (t_2-t_1)(t_3-t_1)\,\Sch_{231}^q + q_2\,\Sch_{312}^q  \\ 
            & \quad + (t_2-t_1)\,\Sch_{2413}^q + \Sch_{3412}^q,
\end{align*}
where $\Sch_w^q=\Sch_w^q(x,t)$.  To get the corresponding product $\sigma_{231}\circ\sigma_{231}$ in $QH_T^*(\Fl(3))$, simply discard the last two terms.

This provides an easy way to compute equivariant quantum products.  Using Maple\footnote{We thank Anders Buch for providing his code for computing equivariant products, which we adapted to include the quantum case.  The other multiplication tables, as well as code, are available from the authors upon request.}, we computed the full multiplication tables for $QH_T^*(\Fl(n))$ for $n\leq 5$; individual products are also quickly computable for higher $n$.  We include the $n=3$ case in Table~\ref{table_m3}.


%
%

\begin{table}[h]
\[
\begin{array}{|l|l|l|l|} \hline
 u  &  v  & \sigma_u \circ \sigma_v   \\ \hline\hline
213 & 213 & \sigma_{312} + (t_2-t_1)\, \sigma_{213} + q_1\, \sigma_{123} \\ \hline
213 & 132 & \sigma_{231} + \sigma_{312} \\ \hline
213 & 231 & \sigma_{321} + (t_2-t_1)\,\sigma_{231} \\ \hline
213 & 312 & (t_3-t_1)\,\sigma_{312} + q_1\,\sigma_{132} \\ \hline
213 & 321 & (t_3-t_1)\,\sigma_{321} + q_1\,\sigma_{231} + q_1 q_2 \,\sigma_{123} \\ \hline
132 & 132 & \sigma_{231} + (t_3-t_2)\,\sigma_{132} + q_2\,\sigma_{123}   \\ \hline
132 & 231 & (t_3-t_1)\,\sigma_{231} + q_2 \,\sigma_{213} \\ \hline
132 & 312 & \sigma_{321} + (t_3-t_2)\,\sigma_{312} \\ \hline
132 & 321 & (t_3-t_1)\,\sigma_{321} + q_2\,\sigma_{312} + q_1 q_2\,\sigma_{123} \\ \hline
231 & 231 & (t_2-t_1)(t_3-t_1)\,\sigma_{231} + q_2\,\sigma_{312} + q_2(t_2-t_1)\,\sigma_{213} \\ \hline
231 & 312 & (t_3-t_1)\,\sigma_{321} + q_1 q_2\,\sigma_{123} \\ \hline
231 & 321 & (t_2-t_1)(t_3-t_1)\,\sigma_{321} + q_2(t_3-t_1)\,\sigma_{312} \\
    &     &  \quad + q_1 q_2 \,\sigma_{132} + q_1 q_2 (t_2-t_1)\,\sigma_{123} \\ \hline
312 & 312 & (t_2-t_1)(t_3-t_1)\,\sigma_{312} + q_1\,\sigma_{231} + q_1(t_3-t_2)\,\sigma_{132} \\ \hline
312 & 321 & (t_2-t_1)(t_3-t_1)\,\sigma_{321} + q_1(t_3-t_1)\,\sigma_{231} + q_1 q_2\,\sigma_{213}  \\
    &     & \quad + q_1 q_2 (t_3-t_2)\,\sigma_{123} \\ \hline
321 & 321 & (t_2-t_1)(t_3-t_1)(t_3-t_2)\,\sigma_{321} + [q_2(t_3-t_1)(t_3-t_2) + q_1q_2]\,\sigma_{312} \\
    &     & \quad + [q_1(t_2-t_1)(t_3-t_1) + q_1q_2]\,\sigma_{231} + q_1q_2(t_3-t_2)\,\sigma_{132}  \\
    &     & \quad + q_1q_2(t_2-t_1)\,\sigma_{213} + q_1q_2(t_2-t_1)(t_3-t_2)\,\sigma_{123} \\ \hline
\end{array}
\]
\caption{Equivariant quantum products in $QH_T^*Fl(3)$. \label{table_m3}}
\end{table}


\excise{

\begin{table}[h]
\[
\begin{array}{|l|l|} \hline
 w     &  \Sch^q_w(x,t)    \\ \hline\hline
1234   &  1  \\ \hline
2134   & x_1 - t_1 \\ \hline
1324   & x_1 + x_2 - t_1 - t_2   \\ \hline
1243   &x_1+x_2+x_3-t_1-t_2-t_3 \\ \hline
2314   & x_1\,x_2 + q_1 - \left( x_1 + x_2 \right) t_1 + t_1^2 \\ \hline
3124   & x_1^2 - q_1 - x_1\,(t_1 + t_2) + t_1\,t_2 \\ \hline
2143   & x_1^2 + x_1\,x_2 + x_1\,x_3 - x_1\,(t_1+t_2+t_3) - (x_1+x_2+x_3)\,t_1  + t_1^2 + t_1\,t_2 + t_1\,t_3 \\ \hline
1342   & x_1\,x_2 + x_1\,x_3 + x_2\,x_3 + q_2 + q_1 - (x_1+x_2+x_3) (t_1+t_2) + t_1^2 + t_2^2 + t_1\,t_2 \\ \hline
1423   & x_1^2 + x_1\,x_2 + x_2^2 - q_1 - q_2 - (x_1+x_2) (t_1+t_2+t_3) + t_1\,t_2 + t_1\,t_3  + t_2\,t_3 \\ \hline
2341   & x_1\,x_2\,x_3 + q_1\,x_3 + q_2\,x_1 - \left( x_1\,x_2 + x_1\,x_3 + x_2\,x_3 + q_1  + q_2 \right) t_1   \\ 
       & \quad + \left( x_1 + x_2 + x_3 \right) t_1^2 -t_1^3 \\ \hline
3214   & \left( x_1 - t_2 \right)  \left( x_1\,x_2 + q_1 - \left( x_1 + x_2 \right) t_1 + t_1^2 \right)  \\ \hline
2413   & x_1^2\,x_2 + x_1\,x_2^2 + q_1\,x_1 + q_1\,x_2 - q_2\,x_1   - (x_1+x_2)^2\,t_1  - x_1\,x_2\,(t_2+t_3)  \\
       & \quad  + x_1\,t_1\,t_2 + x_1\,t_1^2 + x_2\,t_1^2 + x_2\,t_1\,t_2 + x_2\,t_1\,t_3 + x_1\,t_1\,t_3  - t_1^2\,(t_2 +t_3)   \\ 
       & \quad + q_2\,t_1  - q_1\,t_2 - q_1\,t_3 \\ \hline
3142   & x_1^2\,x_2 + x_1^2\,x_3 + q_1\,x_1 - q_1\,x_3 - x_1\,x_2\,(t_1+t_2) - x_1\,x_3\,(t_1+t_2)      \\
       & \quad - x_1^2\,(t_1+t_2)  + x_1\,(t_1+t_2)^2 + (x_2+x_3)\,t_1\,t_2  - t_1\,t_2^2 - t_2\,t_1^2 \\ \hline
4123   & x_1^3 - 2\,q_1\,x_1 - q_1\,x_2  - x_1^2\,(t_1+t_2+t_3) + x_1\,(t_1\,t_2 + t_1\,t_3  + t_2\,t_3)    \\
       & \quad - t_1\,t_2\,t_3 + t_1\,q_1 + t_2\,q_1 + t_3\,q_1  \\ \hline
1432   &  x_1^2\,x_2 + x_1^2\,x_3 + x_1\,x_2^2 + x_1\,x_2\,x_3 + x_2^2\,x_3 + q_1\,x_1 + q_1\,x_2 - q_1\,x_3  + q_2\,x_2    \\
       & \quad - (x_1+x_2)^2\,(t_1+t_2) - (x_1\,x_3 + x_2\,x_3)\,(t_1+t_2) - ( x_1\,x_2 + x_1\,x_3 + x_2\,x_3 )\,t_3      \\
       & \quad  + (x_1+x_2)\,(t_1+t_2)^2  + (x_1+x_2)\,(t_1\,t_3+t_2\,t_3) + x_3\,(t_1\,t_2 + t_1\,t_3 + t_2\,t_3)    \\
       & \quad - t_1^2\,t_2 - t_1^2\,t_3  - t_1\,t_2^2 - t_1\,t_2\,t_3 - t_2^2\,t_3 - q_1\,t_3  - q_2\,t_3\\ \hline
3241   &   x_1^2\,x_2\,x_3  + q_1\,x_1\,x_3 + q_2\,x_1^2 - x_1 \left( x_1\,x_2 + x_1\,x_3 + x_2\,x_3 + q_1 + q_2 \right) t_1    \\
       & \quad + x_1 \left( x_1 + x_2 + x_3 \right) t_1^2 - x_1\, t_1^3  - x_1\,x_2\,x_3\,t_2  - q_1\,x_3\,t_2 - q_2\,x_1\,t_2    \\
       & \quad +  \left( x_1\,x_2 + x_1\,x_3 + x_2\,x_3 + q_1 + q_2 \right) t_1\,t_2 -  \left( x_1 + x_2 + x_3 \right) t_1^2\,t_2 + t_1^3\,t_2  \\ \hline
2431   &  \left( x_1+x_2-t_2-t_3  \right) ( x_1\,x_2\,x_3  + q_1\,x_3 + q_2\,x_1 - x_1\,x_2\,t_1  - x_1\,x_3\,t_1 - x_2\,x_3\,t_1   \\
       & \qquad \qquad \qquad  - q_1\,t_1  - q_2\,t_1 + x_1\,t_1^2 + x_2\,t_1^2 + x_3\,t_1^2 - t_1^3 )  \\ \hline
3412   &  x_1^2\,x_2^2  - q_2\,x_1^2  + 2\,q_1\,x_1\,x_2 + q_1^2 + q_1\,q_2  - x_1^2\,x_2\,t_1 - x_1^2\,x_2\,t_2 - x_1\,x_2^2\,t_1 - x_1\,x_2^2\,t_2  \\
       & \quad - q_1\,x_1\,t_1 - q_1\,x_1\,t_2 - q_1\,x_2\,t_1 - q_1\,x_2\,t_2 + q_2\,x_1\,t_1 + q_2\,x_1\,t_2 + x_1\,x_2\,(t_1+t_2)^2    \\
       & \quad + x_1^2\,t_1\,t_2 + x_2^2\,t_1\,t_2 - (x_1+x_2)\,(t_1^2\,t_2+t_1\,t_2^2)  + t_1^2\,t_2^2 + q_1\,t_1^2 + q_1\,t_2^2 - q_2\,t_1\,t_2  \\ \hline
4213   &  x_1^3\,x_2 + q_1\,x_1^2 - q_1\,x_1\,x_2 - q_1^2  - q_1\,q_2  - x_1^3\,t_1 - x_1^2\,x_2\,t_1 - x_1^2\,x_2\,t_2 - x_1^2\,x_2\,t_3   \\
       & \quad  + q_1\,x_2\,t_1 - q_1\,x_1\,t_3  + q_1\,x_1\,t_1 - q_1\,x_1\,t_2  + x_1^2\,t_1^2 + x_1^2\,t_1\,t_2  + x_1^2\,t_1\,t_3  \\
       & \quad + x_1\,x_2\,t_1\,t_2 + x_1\,x_2\,t_2\,t_3 + x_1\,x_2\,t_1\,t_3 - x_1\,t_1^2\,t_2 - x_1\,t_1^2\,t_3 \\
       & \quad - x_1\,t_1\,t_2\,t_3 - x_2\,t_1\,t_2\,t_3  + t_1^2\,t_2\,t_3 - q_1\,t_1^2 + q_1\,t_2\,t_3   \\ \hline
4132   & x_1^3\,x_2 + x_1^3\,x_3 + q_1\,x_1^2  - q_1\,x_1\,x_2 - 2\,q_1\,x_1\,x_3 - q_1\,x_2\,x_3 - q_1^2 - q_1\,q_2  \\
       & \quad - x_1^3\,t_1 - x_1^3\,t_2  - x_1^2\,x_2\,t_1 - x_1^2\,x_2\,t_2  - x_1^2\,x_2\,t_3 - x_1^2\,x_3\,t_1 - x_1^2\,x_3\,t_2 - x_1^2\,x_3\,t_3  \\
       & \quad + q_1\,x_1\,t_1 + q_1\,x_1\,t_2  + q_1\,x_2\,t_1  + q_1\,x_2\,t_2 + q_1\,x_3\,t_1 + q_1\,x_3\,t_2 + q_1\,x_3\,t_3 - q_1\,x_1\,t_3 \\
       & \quad  + x_1^2\,t_1^2 + x_1^2\,t_2\,t_3 + x_1\,x_3\,t_1\,t_3 + x_1\,x_3\,t_2\,t_3 + x_1\,x_2\,t_2\,t_3 + x_1\,x_3\,t_1\,t_2 + x_1\,x_2\,t_1\,t_2   \\
       & \quad + 2\,x_1^2\,t_1\,t_2 + x_1^2\,t_1\,t_3 + x_1^2\,t_2^2 + x_1\,x_2\,t_1\,t_3  - x_1\,t_1^2\,t_2 - x_1\,t_1^2\,t_3 - x_1\,t_2^2\,t_3 - x_1\,t_1\,t_2^2 \\
       & \quad - 2\,x_1\,t_1\,t_2\,t_3 - x_2\,t_1\,t_2\,t_3 - x_3\,t_1\,t_2\,t_3 + t_1\,t_2^2\,t_3  + t_1^2\,t_2\,t_3 - q_1\,t_2^2 - q_1\,t_1^2 - q_1\,t_1\,t_2  \\ \hline
3421   & \left( x_1\,x_2 + q_1 - x_1\,t_2 - x_2\,t_2 + t_2^2 \right)  ( x_1\,x_2\,x_3 + q_1\,x_3 + q_2\,x_1 - x_1\,x_2\,t_1 - x_1\,x_3\,t_1 \\
       & \qquad \qquad\qquad \qquad - x_2\,x_3\,t_1 +  x_1\,t_1^2 + x_2\,t_1^2 + x_3\,t_1^2 - t_1^3  - q_1\,t_1 - q_2\,t_1 ) \\ \hline
4231   &  \left( x_1^2 - q_1 - x_1\,t_2  - x_1\,t_3 + t_2\,t_3 \right)  ( x_1\,x_2\,x_3  + q_1\,x_3 + q_2\,x_1  - x_1\,x_2\,t_1 - x_1\,x_3\,t_1 \\
       & \qquad \qquad \qquad \qquad -  x_2\,x_3\,t_1 - q_1\,t_1 - q_2\,t_1  +  x_1\,t_1^2 + x_2\,t_1^2 + x_3\,t_1^2  - t_1^3 )  \\ \hline
4312   & - \left( t_1\,t_2^2\,x_2 + t_1\,x_1\,t_2^2 + x_1\,t_2\,q_1 + t_2\,x_1^2\,x_2 - x_1\,t_2^2\,x_2 - 2\,t_2\,x_1\,t_1\,x_2 + t_2\,t_1\,q_2 - t_2\,x_1\,q_2 + t_2\,x_1\,t_1^2 + t_2\,t_1^2\,x_2 - x_1\,t_1\,q_2 - x_1\,t_1^2\,x_2 + x_1\,t_1\,q_1 + x_1^2t_1\,x_2 - 2\,x_1\,x_2\,q_1 + t_2\,x_1\,x_2^2 + t_2\,q_1\,x_2 - t_1\,t_2\,x_1^2 + t_1\,x_1\,x_2^2 - t_1\,t_2\,x_2^2 + t_1\,q_1\,x_2 + x_1^2\,q_2 - x_1^2\,x_2^2 - 
q_1\,q_2 - t_2^2\,q_1 - t_1^2\,q_1 - 
t_1^2\,t_2^2 - q_1^2 \right)  \left( x_1 - 
t_3 \right)   \\ \hline
4321   & \left( x_1 - t_3 \right)  \left( x_1\,x_2 + q_1 - \left( x_2 + x_1 \right) t_2 + t_2^2 \right)  \left( x_1\,x_2\,x_3 + x_1\,q_2 + q_1\,x_3 - \left( x_2\,x_3 + q_2 + x_1\,x_3 + x_1\,x_2 + q_1 \right) t_1 + \left( x_3 + x_2 + x_1 \right) t_1^2 - t_1^3 \right)  \\ \hline
\end{array}
\]
\caption{Equivariant quantum Schubert polynomials for $Fl(4)$. \label{table4}}
\end{table}

}




\begin{thebibliography}{LaSch}

\bibitem[An]{anderson} Dave~Anderson, ``Positivity in the cohomology of flag bundles (after Graham),'' \texttt{arXiv:0711.0983 [math.AG]}.

\bibitem[AC]{ac} Dave~Anderson and Linda~Chen, ``Positivity of equivariant Gromov-Witten invariants,'' \texttt{arXiv:1110.5900 [math.AG]}.

\bibitem[AGM]{agm} Dave~Anderson, Stephen~Griffeth, and Ezra~Miller, ``Positivity and Kleiman transversality in equivariant $K$-theory of homogeneous spaces,'' {\em J. Eur. Math. Soc.} {\bf 13} (2011), 57--84.

\bibitem[BGG]{bgg} I.N. Bernstein, I.M. Gelfand, S.I. Gelfand, ``Schubert cells and cohomology of the space $G/P$,'' \emph{Russian Math. Surveys} {\bf 28} (1973), 1--26.

\bibitem[Be]{bertram} Aaron~Bertram, ``Quantum Schubert calculus,'' {\em Adv. Math.} {\bf 128} (1997), no. 2, 289--305.

\bibitem[BCS]{bcs} Tom~Braden, Linda~Chen, and Frank~Sottile, ``The equivariant Chow rings of Quot schemes,'' \newblock{\em Pacific J.~Math.} {\bf 238} (2008), no.~2, 201--232.


\bibitem[Ch]{chen} Linda~Chen, ``Quantum cohomology of flag manifolds,'' {\em Adv. Math.} {\bf 174} (2003), no. 1, 1--34.

\bibitem[CF1]{cf-flags} Ionu\c{t}~Ciocan-Fontanine, ``The quantum cohomology ring of flag varieties,'' {\em Trans. Amer. Math. Soc.} {\bf 351} (1999), no. 7, 2695--2729.

\bibitem[CF2]{cf-partial} Ionu\c{t}~Ciocan-Fontanine, ``On quantum cohomology rings of partial flag varieties,'' {\em Duke Math. J.} {\bf 98} (1999), no. 3, 485--524.

\bibitem[D]{d} Michel~Demazure, ``D\'{e}singularization des vari\'{e}t\'{e}s de Schubert g\'{e}n\'{e}ralis\'{e}e,'' \emph{Ann. Sci. Ecole Norm. Sup.} {\bf 7} (1974) 53--88.

\bibitem[EG]{eg} Dan~Edidin and William~Graham, ``Equivariant intersection theory,'' \newblock{\em Invent. Math.} {\bf 131} (1998), 595--634.

\bibitem[FGP]{fgp} Sergey~Fomin,  Sergei~Gelfand, and Alexander~Postnikov, ``Quantum Schubert polynomials,'' {\em J. Amer. Math. Soc.} {\bf 10} (1997), no. 3, 565--596.

\bibitem[Fu]{fulton} William~Fulton, ``Universal Schubert polynomials,'' {\em Duke Math. J.} {\bf 96} (1999), no. 3, 575--594.

\bibitem[FP]{fp} William~Fulton and Rahul~Pandharipande, ``Notes on stable maps and quantum cohomology,'' {\em Algebraic geometry---Santa Cruz 1995}, 45--96, Proc. Sympos. Pure Math., 62, Part 2, Amer. Math. Soc., Providence, RI, 1997.

\bibitem[Kim]{kim-eq} Bumsig~Kim, ``On equivariant quantum cohomology,'' {\em Internat. Math. Res. Notices} {\bf 1996}, no. 17, 841--851.

\bibitem[KiMa]{kima} Anatol~N.~Kirillov and Toshiaki~Maeno, ``Quantum double Schubert polynomials, quantum Schubert polynomials and Vafa-Intriligator formula,'' {\em Discrete Math.} {\bf 217} (2000), no. 1-3, 191--223.

\bibitem[Kl]{kleiman} Steven~Kleiman, ``The transversality of a generic translate,'' {\em Compositio Math.} \textbf{28} (1974), 287--297.

 \bibitem[KnMi]{knmi} Allen~Knutson and Ezra~Miller, ``Gr\"{o}bner geometry of Schubert polynomials,'' {\em Duke Math. J.} {\bf 119} (2003), no. 2, 221-260.

\bibitem[LaSh1]{ls-acta} Thomas~Lam and Mark~Shimozono, ``Quantum cohomology of $G/P$ and homology of affine Grassmannian,'' {\em Acta Math.} {\bf 204} (2010), no. 1, 49--90.

\bibitem[LaSh2]{ls} Thomas~Lam and Mark~Shimozono, ``From quantum Schubert polynomials to $k$-Schur functions via the Toda lattice", \texttt{arxiv:1010.4047}.

\bibitem[LaSh3]{ls2} Thomas~Lam and Mark~Shimozono, ``Quantum double Schubert polynomials represent Schubert classes", \texttt{arxiv:1108.4958v1}.

\bibitem[LaSch]{lsch} Alain~Lascoux and M.-P.~Sch\"{u}tzenberger, ``Polyn\^{o}mes de Schubert,'' \emph{C.R. Acad. Sci. Paris S\'{e}r. I Math} {\bf 294} (1982), 447--450.

\bibitem[LM]{lm} Luc~Lapointe and Jennifer~Morse, ``Quantum cohomology and the $k$-Schur basis,'' \emph{Trans. Amer. Math. Soc.} {\bf 360} (2008), 2021--2040.

\bibitem[Mi1]{mihalcea-eqsc} Leonardo~Mihalcea, ``Equivariant quantum Schubert calculus,'' {\em Adv. Math.} {\bf 203} (2006), no. 1, 1--33.

\bibitem[Mi2]{mihalcea-positivity} Leonardo~Mihalcea, ``Positivity in equivariant quantum Schubert calculus,'' {\em Amer. J. Math.} {\bf 128} (2006), no. 3, 787--803.

\bibitem[Mi3]{mihalcea-giambelli} Leonardo~Mihalcea, ``Giambelli formulae for the equivariant quantum cohomology of the Grassmannian,'' {\em Trans. Amer. Math. Soc.} {\bf 360} (2008), no. 5, 2285--2301.

\bibitem[Sp]{speiser} Robert~Speiser, ``Transversality theorems for families of maps,'' {\em Algebraic geometry (Sundance, UT, 1986)}, 235--252, Lecture Notes in Math., vol. 1311, Springer-Verlag, 1988.


\end{thebibliography}
\end{document}